\tikzset{>=latex}
\newcounter{corr}
\definecolor{violet}{rgb}{0.580,0.,0.827}
\newcommand{\corr}[3]{\typeout{Warning : a correction remains in page \thepage}
  \stepcounter{corr}        
 	            {\color{blue}\ifmmode\text{\,\sout{\ensuremath{#1}}\,}\else\sout{#1}\fi}
              {\color{red}#2}
              {\color{violet} #3}
}
\newtheorem{theorem}{Theorem}[section]
\newtheorem{lemma}[theorem]{Lemma}
\theoremstyle{definition}
\theoremstyle{remark}
\newtheorem{remark}[theorem]{Remark}
\numberwithin{equation}{section}
\newcommand{\elem}{\ensuremath{T}}
\newcommand{\mesh}{\ensuremath{\mathcal T}}
\newcommand{\faceSet}{\ensuremath{\mathcal F}}
\newcommand{\face}{\ensuremath{F}}
\newcommand{\skeleton}{\ensuremath{\faceSet}}
\newcommand{\skeletalSpace}{\ensuremath{M}}
\newcommand{\linElementSpace}{\ensuremath{\overline V^\textup c}}
\newcommand{\discElementSpace}{\ensuremath{V}}
\newcommand{\polynomials}{\ensuremath{\mathcal P}}
\newcommand{\level}{\ensuremath{\ell}}
\newcommand{\iterMgOuter}{i}
\newcommand{\iterMgInner}{n}
\newcommand{\Div}{\nabla\!\cdot\!}
\newcommand{\traceOp}{\ensuremath{\gamma_\level}}
\newcommand{\injectionOp}{\ensuremath{I}}
\newcommand{\projectionOp}{\ensuremath{P^a}}
\newcommand{\projectionOrthogonalOP}{\ensuremath{P^{\langle\cdot\rangle}}}
\newcommand{\skeletalProj}{\ensuremath{\Pi^\partial}}
\newcommand{\averagingOp}{\ensuremath{I^\textup{avg}}}
\newcommand{\contLinProj}{\ensuremath{\overline \Pi^\textup c}}
\newcommand{\discProj}{\ensuremath{\Pi^\textup d}}
\newcommand{\liftingOp}{\ensuremath{S}}
\renewcommand{\vec}[1]{\ensuremath{\boldsymbol{#1}}}
\newcommand{\Nu}{\ensuremath{\vec \nu}}
\newcommand{\dx}{}
\newcommand{\ds}{}
\newcommand{\localU}{\ensuremath{\mathcal U}}
\newcommand{\localV}{\ensuremath{\mathcal V}}
\newcommand{\localQ}{\ensuremath{\vec{\mathcal Q}}}
\newcommand{\avg}[1]{\{\!\{ #1 \}\!\}}
\newcommand{\ureconstructed}{\overline{u}}
\newcommand{\IR}{\ensuremath{\mathbb R}}
\newcommand{\coloneq}{\mathrel{\mathop:}=}
\newcommand\footnoteref[1]{\protected@xdef\@thefnmark{\ref{#1}}\@footnotemark}
\begin{document}

\title[Homogeneous multigrid for hybrid discretizations]{Homogeneous multigrid for hybrid discretizations: application to HHO methods} 

\author[D.\ A.\ Di Pietro]{Daniele A.\ Di Pietro}
\address{IMAG, Univ. Montpellier, CNRS, Montpellier, France}
\email{daniele.di-pietro@umontpellier.fr}

\author[Z.\ Dong]{Zhaonan Dong}
\address{Inria, 2 rue Simone Iff, 75589 Paris, France, 2) CERMICS, Ecole des Ponts, 77455 Marne-la- Vall\'{e}e, France }
\email{zhaonan.dong@inria.fr}

\author[G.\ Kanschat]{Guido Kanschat}
\address{Interdisciplinary Center for Scientific Computing (IWR), Heidelberg University, Mathematikon, Im Neuenheimer Feld 205, 69120 Heidelberg, Germany}
\email{kanschat@uni-heidelberg.de}

\author[P.\ Matalon]{Pierre Matalon}
\address{CMAP, CNRS, École Polytechnique, Institut Polytechnique de Paris, 91120 Palaiseau, France}
\email{pierre.matalon@polytechnique.edu}


\author[A.\ Rupp]{Andreas Rupp}
\address{School of Engineering Sciences, Lappeenranta--Lahti University of Technology, P.O. Box 20, 53851 Lappeenranta, Finland}
\email{andreas.rupp@fau.de}


\subjclass[2010]{65F10, 65N30, 65N50}

\begin{abstract}
 We prove the uniform convergence of the geometric multigrid V-cycle for hybrid high-order (HHO) and other discontinuous skeletal methods. Our results generalize previously established results for HDG methods, and our multigrid method uses standard smoothers and local solvers that are bounded, convergent, and consistent. We use a weak version of elliptic regularity in our proofs. Numerical experiments confirm our theoretical results.
 \\[1ex] \noindent \textsc{Keywords.} Geometric multigrid, homogeneous multigrid, hybrid high-order methods, skeleton methods.
\end{abstract}

\date{\today}
\maketitle

\section{Introduction}

In the context of fast solvers for linear systems arising from hybrid discretizations of second-order elliptic equations, we propose in this work a generalized framework for the convergence analysis of geometric multigrid methods \cite{briggs_multigrid_2000,trottenberg_multigrid_2001}.

Hybrid discretization methods have been part of the numerical analyst's toolbox to solve partial differential equations since the seventies, starting with hybridized versions of mixed methods such as the Raviart-Thomas (RT-H) \cite{raviart_thomas_1977} and the Brezzi-Douglas-Marini (BDM-H) \cite{brezzi_two_1985} methods. They have gained growing interest in recent years with the outbreak of modern schemes such as the unifying framework of Hybridizable Discontinuous Galerkin (HDG) methods \cite{cockburn_unified_2009} and Hybrid High-Order (HHO) methods \cite{Di-Pietro.Ern.ea:14,Di-Pietro.Ern:15,di_pietro_hybrid_2020}. The list also includes, but is not limited to, Compatible Discrete Operators (CDO) \cite{bonelle_cdo_2014}, Mixed and Hybrid Finite Volumes (MHFV) \cite{droniou_mixed_2006,eymard_discretization_2010,droniou_unified_2010}, Mimetic Finite Differences (MFD) \cite{da_veiga_mimetic_2014}, weak Galerkin \cite{WANG2013103}, local discontinuous Galerkin-hybridizable (LDG-H) \cite{cockburn_ldgh_2008} and discontinuous Petrov--Galerkin (DPG) \cite{dg_dpg_2011} methods. Hybrid discretization methods are characterized by the location of their degrees of freedom (DOFs), placed both within the mesh cells and on the faces. In this configuration, the discrete scheme is built so that the cell DOFs are only locally coupled, leaving the face DOFs in charge of the global coupling. Algebraically, this feature enables the local elimination of the cell unknowns from the arising linear system, resulting in a Schur complement of reduced size, where only the face unknowns remain. The mechanical engineering terminology refers to this elimination process as \emph{static condensation}, and the resulting system is often called the (statically) \emph{condensed} system. The terminology \emph{trace} or \emph{skeleton} system is also employed.
For an extended introduction to hybrid methods and hybridization, we refer to the preface of \cite{di_pietro_hybrid_2020} and the first pages of \cite{cockburn_unified_2009}. 

This paper addresses the fast solution of the condensed systems arising from hybrid discretizations of second-order elliptic equations, focusing on multigrid methods.
Various multigrid solvers and preconditioners have been designed over the last decade, focusing primarily on HDG \cite{cockburn_multigrid_2014,schutz_hierarchical_2017,wildey_unified_2019,muralikrishnan_multilevel_2020,LuRK21,LuRK22a} and HHO \cite{matalon_h-multigrid_2021,di_pietro_towards_2021,botti_p-multilevel_2021,di_pietro_high_order_mg_2023}, but also covering specifically DPG \cite{roberts2016geometric,petrides_adaptive_2021} and RT-H/BDM-H \cite{gopalakrishnan_convergent_2009}.
The above references focus on geometric multigrid methods, while a fully algebraic multigrid method has been considered in \cite{di_pietro_algebraic_2023}.%

The main difficulty in designing a geometric multigrid algorithm for a trace system resides in the fact that the DOFs are supported by the mesh skeleton, which makes classical intergrid transfer operators designed for element-based DOFs unsuitable. While earlier approaches \cite{cockburn_multigrid_2014,gopalakrishnan_convergent_2009,kronbichler_performance_2018} recast trace functions into bulk functions in order to make use of a known efficient solver (typically, a standard piecewise linear continuous Finite Element multigrid solver), the more recent developments seem to converge toward the so-called notion of \emph{homogeneous} multigrid, where the hybrid discretization is conserved at every level of the mesh hierarchy and ``interskeleton’’ transfer operators are designed to communicate trace functions from one mesh skeleton to the other. To this end, multiple skeletal injection operators have already been proposed \cite{LuRK22a,matalon_h-multigrid_2021}.

In this paper, we build upon the work of \cite{LuRK21,LuRK22a,LuRK22b,LuWKR23} on HDG methods to propose a generalized demonstration framework for the uniform convergence of \emph{homogeneous} multigrid methods with V-cycle. The main motivation behind this generalization is to include the HHO methods in its application scope. Indeed, if recent multigrid solvers for HHO methods \cite{matalon_h-multigrid_2021,di_pietro_towards_2021,di_pietro_high_order_mg_2023} have experimentally shown their optimal behavior, the supporting theory is still missing. In the second part of this paper, we then prove that the classical HHO method fulfills the assumptions of this new framework.

The present theory does not assume full elliptic regularity of the problem, thus allowing complex domains with re-entrant corners. However, although modern hybrid methods natively handle polyhedral elements, it is restricted to simplicial meshes. The multigrid method is built in a standard fashion from an abstract injection operator: standard smoothers are used, and the restriction operator is chosen as the adjoint of the injection operator. A classical, symmetric V-cycle is used if the problem exhibits full elliptic regularity. If not, then a variable V-cycle is employed, in which the number of (symmetric) smoothing steps increases as the level decreases in the hierarchy.

The paper is organized as follows. In Section \ref{SEC:pb_formulation}, we introduce notations and the model problem. Section \ref{SEC:disc_skel_methods} describes the abstract framework: (i) the hybrid method is described abstractly, in the form of approximation spaces, local linear operators, stabilization term and bilinear form; (ii) the injection operators used in the multigrid method and its analysis are introduced; (iii) the properties assumed from the hybrid method (\eqref{EQ:LS1}-\eqref{EQ:LS9}) and the injection operator (\eqref{EQ:IA1}-\eqref{EQ:IA2}) are listed.
Section \ref{SEC:convergence_results} describes the multigrid method and asserts the associated convergence results, while Section \ref{SEC:analysis} carries out the convergence analysis.
In Section~\ref{SEC:hho_proofs}, we verify our framework's assumptions for the standard HHO method, and analyze various injection operators in Section~\ref{SEC:hho_injection_operators}.
Finally, Section \ref{SEC:numerical_experiments} presents the numerical experiments supporting our theoretical results, realized with the HHO method on two and three-dimensional test cases.

\section{Problem formulation and notation} \label{SEC:pb_formulation}
%
Let $\Omega \subset \IR^d$ be a polygonally bounded Lipschitz domain with boundary $\partial \Omega$. We approximate the unknown $u \in H^1_0(\Omega)$ satisfying
\begin{equation} \label{EQ:poisson_pb}
 - \Delta u = f \qquad \text{ in } \Omega
\end{equation}
for some right-hand side $f$. We assume the problem to be regular in the sense that
\begin{equation}\label{EQ:regularity}
 \| u \|_{1 + \alpha} \le c \| f \|_{\alpha - 1} \qquad \text{ for some } \alpha \in (\tfrac{1}{2},1].
\end{equation}
Here, $\| \cdot \|_\alpha$ for $\alpha \in \IR_{\ge 0}$ denotes the induced norm of the Sobolev--Slobodeckij space $H^\alpha(\Omega)$. By $\| \cdot \|_{-\alpha}$ and $H^{-\alpha}(\Omega)$ we denote the dual norm and the dual space of $H^\alpha_0(\Omega)$ with respect to the extension of $L^2$ duality, respectively. We denote by $\|\cdot\|_X$ and $(\cdot,\cdot)_X$ the $L^2$-norm and $L^2$-scalar product with respect to the set $X\subset \Omega$, respectively.
The same notation is also used for the inner product
of $[L^2(X)]^d$ (the exact meaning can be inferred from the context) such that for all $\vec p, \vec q \in [L^2(X)]^d$, $(p, q)_X \coloneq \int_X \vec p\cdot \vec q$, where $\cdot$ denotes the dot product. Without loss of generality, we assume that $\Omega$ has diameter 1.

\begin{remark}[More general boundary conditions]
 The following approaches directly transfer to non-homogeneous Dirichlet boundary conditions since those only influence the right-hand sides of the numerical schemes. However, this is not true (without some further arguments) for locally changing the type of the boundary condition.
\end{remark}
%
\section{Discontinuous skeletal methods}\label{SEC:disc_skel_methods}
%
We will render a general framework for all discontinuous skeletal methods covering HDG and HHO methods. To this end, we start with a family $(\mesh_\level)_{\level=0,\dots,L}$ of successively refined simplicial meshes and their corresponding face sets $(\faceSet_\level)_{\level=0,\dots,L}$. 
For all level $\level$, we define $h_\level = \max_{\elem\in\mesh_\level} h_\elem$, where $h_\elem$ denotes the diameter of $\elem$.
The family $(\mesh_\level)_{\level=0,\dots,L}$ is regular (see \cite[Def.\ 11.2]{ErnG21v1} for a precise definition), which implies that mesh elements do not deteriorate, and all of its elements are geometrically conforming, which excludes hanging nodes. Additionally, we assume that either $\face \subset \partial \Omega$, or $\face \cap \partial \Omega$ comprises at most one point, and that refinement does not progress too fast, i.e., there is $c_\text{ref} > 0$ such that
\begin{equation}\label{EQ:cref}
  h_\level \ge c_\text{ref} h_{\level-1} \qquad \forall \level \in \{1, \dots, L\}.
\end{equation}
Finally, we assume that our mesh is quasi-uniform implying that there is a constant $c_\textup{uni}$ such that $h_\level \le c_\textup{uni} \min_{\elem \in \mesh_\level} h_\elem$.

Next, we choose a finite-dimensional space $M_\face \subset L^2(\face)$ of functions living on face $\face$ for all $\face \in \faceSet_\level$. In the numerical scheme, this space will be the test and trial spaces for the skeletal variable $m_\level$ approximating the trace of the unknown $u$ on the skeleton, which we denote by $\faceSet_\level$. Moreover, finite-dimensional approximation spaces $V_\elem \subset L^2(\elem)$ and $\vec W_\elem \subset L^2(\elem)^d$ are defined element-wise. The space $V_\elem$ is the local test and trial space for the primary unknown $u$, while the space $\vec W_\elem$ is the local test and trial space for the dual unknown $\vec q = - \nabla u$. Global spaces are defined by concatenation via
\begin{align}
  \skeletalSpace_\level &\coloneq \left\{ m \in L^2 (\faceSet_\level) \;\middle|\;
  \begin{array}{r@{\,}c@{\,}ll}
  m_{|\face} &\in& \polynomials_p & \forall \face \in \faceSet_\level\\
  m_{|\face} &=& 0 & \forall \face \subset \partial \Omega   
  \end{array} \right\}, \\
  \discElementSpace_\level &\coloneq\bigl\{ v \in L^2(\Omega) \big|\;v_{|\elem} \in V_\elem, \forall \elem \in \mesh_\level \bigr\},\\
  \vec W_\level &\coloneq\bigl\{ \vec q \in L^2(\Omega;\mathbb R^d) \big|\;\vec q_{|\elem} \in \vec W_\elem, \forall \elem \in \mesh_\level \bigr\}.    
\end{align}

We define element-wise, abstract linear operators
%
\begin{align*}
 \localU_\elem \colon \skeletalSpace_\level|_{\partial \elem} \to \discElementSpace_\elem, && \localV_\elem \colon L^2(\elem) \to \discElementSpace_\elem, &&
 \localQ_\elem \colon \skeletalSpace_\level|_{\partial \elem} \to \vec W_\elem.
\end{align*}
The choice of these operators, called local solvers, influences the numerical schemes. Global linear operators are constructed by concatenation of their element-local analogous:
%
\begin{align*}
 \localU_\level \colon \skeletalSpace_\level \to \discElementSpace_\level, && \localV_\level \colon L^2(\Omega) \to \discElementSpace_\level, &&
 \localQ_\level \colon \skeletalSpace_\level \to \vec W_\level.
\end{align*}
One can show that $m_\level$ approximates the trace of $u$ on the skeleton and that
\begin{equation*}
 u \approx \localU_\level m_\level + \localV_\level f
\end{equation*}
in the bulk of $\Omega$ if $m_\level$ satisfies 
\begin{equation} \label{EQ:condensed_pb}
 a_\level(m_\level,\mu) = \int_\Omega f \,\localU_\level \mu \dx \qquad \text{ for all } \mu \in \skeletalSpace_\level,
\end{equation}
where the elliptic and continuous bilinear form $a_\level$ has the form
\begin{equation} \label{EQ:bilinear}
 a_\level(m,\mu) = \int_\Omega \localQ_\level m \cdot \localQ_\level \mu \dx + s_\level(m,\mu).
\end{equation}
The symmetric positive semi-definite $s_\level$ is usually referred to as (condensed) \emph{penalty} or \emph{stabilizing term}.

The choices of the spaces $M_\face$, $V_\elem$, $\vec W_\elem$, the local solvers $\localU_\level$, $\localV_\level$, $\localQ_\level$ and the stabilizing term $s_\level$ completely define a discontinuous skeletal method.
\begin{remark}[Possible choices of hybrid methods]\
 \begin{itemize}[leftmargin=*]
 \item For the \textbf{LDG-H} method, we set $M_\face = \mathcal P_p(\face)$, $V_\elem = \mathcal P_p(\elem)$, and $\vec W_\elem = \mathcal P^d_p(\elem)$.
   The operators $\localU_\elem$ and $\localQ_\elem$ map $m_{\partial \elem}$ to the element-wise solutions $u_\elem\in V_\elem$ and $\vec q_\elem \in \vec W_\elem$ of 
  \begin{subequations}\label{EQ:hdg_scheme}
  \begin{align}
   \int_\elem \vec q_\elem \cdot \vec p_\elem \dx - \int_\elem u_\elem \Div \vec p_\elem \dx
   & = - \int_{\partial \elem} m \vec p_\elem \cdot \Nu \ds\label{EQ:hdg_primary} \\
   \int_{\partial \elem} ( \vec q_\elem \cdot \Nu + \tau_\level u_\elem ) v_\elem \ds - \int_\elem \vec q_\elem \cdot \nabla v_\elem \dx
   & = \tau_\level \int_{\partial \elem} m v_\elem \ds \label{EQ:hdg_flux}
  \end{align}
  \end{subequations}
  with test functions $v_\elem\in V_\elem$ and $\vec p_\elem \in \vec W_\elem$, and $\Nu$ the outward normal vector to $\partial\elem$. In this sense $\localU_\elem \colon m_{\partial \elem} \mapsto u_\elem$, $\localQ_\elem: m_{\partial \elem} \mapsto \vec q_\elem$ for all $\elem \in \mesh_\level$.

  Finally, the bilinear form $a_\level$ is defined as
  \begin{equation*}
   a_\level(m_\level,\mu) = \int_\Omega \localQ_\level m_\level \cdot \localQ_\level \mu \dx + \underbrace {\sum_{\elem \in \mesh_\level} \tau_\level \int_{\partial \elem} (\localU_\level m_\level - m_\level) (\localU_\level \mu - \mu) \ds }_{ = s_\level(m_\level,\mu) }
  \end{equation*}
  for some parameter $\tau_\level > 0$.
  \item For the \textbf{RT-H} and \textbf{BDM-H} methods, one uses the framework of the LDG-H method replacing the local bulk spaces by the RT-H and BDM-H spaces with $\tau_\level = 0$.
  \item For the \textbf{HHO} method, we set $M_\face = \mathcal P_p(\face)$, $V_\elem = \mathcal P_p(\elem)$, and $\vec W_\elem = \nabla \mathcal P_{p+1}(\elem)$.
  For $(u_\elem, m_{\partial \elem}) \in V_\elem \times M_{\partial\elem}$, we define $\vec q_\elem (u_\elem, m_{\partial \elem})$ as the element-wise solution of
  \begin{equation} \label{EQ:hho_flux}
   \int_\elem \vec q_\elem(u_\elem, m_{\partial \elem}) \cdot \vec p_\elem \dx - \int_\elem u_\elem \Div \vec p_\elem \dx 
   = - \int_{\partial \elem} m_{\partial \elem} \vec p_\elem \cdot \Nu \ds
  \end{equation}
  for all $\vec p_\elem \in \vec{W}_\elem$.
  Note that this relation is the same as \eqref{EQ:hdg_primary}. In HHO terminology, $\vec q_\elem(u_\elem, m_{\partial \elem})$ corresponds to $-\nabla \theta_\elem^{p+1}(u_\elem, m_{\partial \elem})$, where $\theta_\elem^{p+1}$ denotes the so-called local higher-order reconstruction operator, defined by \eqref{EQ:hho_flux} and the closure condition
  \begin{equation} \label{EQ:hho_closure}
   \int_\elem \theta_\elem^{p+1}(u_\elem, m_{\partial \elem}) \dx = \int_{\partial \elem} u_\elem \dx.
  \end{equation}
  In a second step, given a local bilinear stabilizer $\underline{s}_\elem((u_\elem,m_{\partial \elem}), (v_\elem,\mu))$, we introduce the local bilinear form
  \begin{multline} \label{EQ:hho_bilinear_uncondensed}
   \underline{a}_\elem((u_\elem, m_{\partial \elem}), (v_\elem, \mu)) \\
   = \int_\elem \vec q_\elem(u_\elem, m_{\partial \elem}) \cdot \vec q_\elem(v_\elem,\mu) \dx + \underline{s}_\elem ((u_\elem, m_{\partial \elem}), (v_\elem, \mu)).
  \end{multline}
  Consider the following local problems:\newline
  (i) For $m_{\partial \elem} \in M_{\partial\elem}$, find $u_\elem^1 \in V_\elem$ such that
\begin{equation} \label{EQ:hho_localU}
    \underline{a}_\elem((u_\elem^1, 0), (v_\elem, 0)) = -\underline{a}_\elem((0, m_{\partial \elem}), (v_\elem, 0))
    \qquad
    \forall v_\elem \in V_\elem.
\end{equation}
(ii) For $f\in L^2(\Omega)$, find $u_\elem^2 \in V_\elem$ such that
\begin{equation} \label{EQ:hho_localV}
    \underline{a}_\elem((u_\elem^2, 0), (v_\elem, 0)) = \int_\elem f v_\elem \dx
    \qquad
    \forall v_\elem \in V_\elem.
\end{equation}
We define $\localU_\elem\colon m_{\partial \elem} \mapsto u_\elem^1$ solution of \eqref{EQ:hho_localU}, $\localV_\elem\colon f \mapsto u_\elem^2$ solution of \eqref{EQ:hho_localV}, and we set element-by-element
\begin{equation}\label{eq:localQ}
  \localQ_\elem\colon m_{\partial \elem} \mapsto \vec q_\elem(\localU_\elem m_{\partial \elem}, m_{\partial \elem})
\end{equation}

  Finally, the global bilinear form is defined as
  \begin{equation} \label{EQ:hho_bilinear}
\begin{aligned}
 a_\level(m_\level,\mu) 
    =:&
 \sum_{\elem \in \mesh_\level}    \underline{a}_\elem((\localU_\elem m_{\partial \elem}, m_{\partial \elem}), (\localU_\elem \mu, \mu)) \\
= &
    \int_\Omega \localQ_\level m_\level \cdot \localQ_\level \mu \dx  + 
    \underbrace{\sum_{\elem \in \mesh_\level} \underline{s}_\elem ((\localU_\level m_\level, m_\level), (\localU_\level \mu, \mu)) }_{ = s_\level(m_\level,\mu)}.
\end{aligned}
  \end{equation}
 \end{itemize}
\end{remark}

\subsection{Operators for the multigrid method and analysis}
%
For functions $\rho, \mu \in L^2(\faceSet_\level)$, we set
\begin{equation*}
 \langle \rho, \mu \rangle_\level = \sum_{\elem \in \mesh} \frac{|\elem|}{|\partial \elem|} \int_{\partial \elem} \rho \mu \ds.
\end{equation*}
Importantly, $\langle \cdot, \cdot \rangle_\level$ defines a scalar product on $\skeletalSpace_\level$, and the norm of this scalar product
\[
\| \cdot \|_\level \coloneq \langle \cdot, \cdot \rangle_\level^{\frac12}
\]
scales like the $L^2$-norm in the bulk of the domain. Moreover, the scalar product can readily be applied to a combination of bulk and skeleton functions: For example, $\langle \mu, v \rangle_\level$ can readily be evaluated for $\mu \in \skeletalSpace_\level$ and $v \in V_\level$ with the understanding that the trace of $v$ on $\partial T$ is used inside the integral. We relate the bilinear forms $a_\level$ and $\langle \cdot, \cdot \rangle_\level$ using the operator $A_\level$, which is defined via
\begin{equation}\label{eq:Al}
  \langle A_\level \rho, \mu \rangle_\level = a_\level(\rho, \mu) \qquad \forall \rho, \mu \in \skeletalSpace_\level.
\end{equation}

The injection operator $\injectionOp_\level \colon \skeletalSpace_{\level - 1} \to \skeletalSpace_\level$ remains abstract at this level. Its properties securing our analytical findings are listed in Section \ref{sec:ass-injection}. Possible realizations of injection operators can be found in \cite{LuRK22a}. The multigrid operator $B_\level\colon \skeletalSpace_\level \to \skeletalSpace_\level$ for preconditioning $A_\level$ will be defined in Section \ref{SEC:main_convergence_result}.

The operators $\projectionOrthogonalOP_{\level-1}$ and $\projectionOp_{\level-1}$, which are defined via
\begin{xalignat}3
 \projectionOrthogonalOP_{\level-1}&\colon \skeletalSpace_\level \to \skeletalSpace_{\level-1},
 &\langle \projectionOrthogonalOP_{\level-1} \rho, \mu \rangle_{\level-1}
 &= \langle \rho, \injectionOp_\level \mu\rangle_\level
 && \forall \mu \in \skeletalSpace_{\level-1},
 \label{EQ:L2_projection_definition}
 \\
 \projectionOp_{\level-1}&\colon \skeletalSpace_\level \to \skeletalSpace_{\level-1},
 &a_{\level-1}(\projectionOp_{\level-1} \rho, \mu)
 &= a_\level(\rho, \injectionOp_\level \mu)
 && \forall \mu \in \skeletalSpace_{\level-1},
 \label{EQ:projection_definition}
\end{xalignat}
replace the $L^2$ and the Ritz projections of conforming methods, respectively. While the former (or a discrete variation of it) is relevant for implementing multigrid methods, the latter is key to the analysis. We also introduce the $L^2$-projections
\begin{align*}
 \skeletalProj_\level \colon & H^1_0(\Omega) \to \skeletalSpace_\level, && \langle \skeletalProj_\level v , \mu \rangle_\level = \langle v, \mu \rangle_\level & \forall \mu \in \skeletalSpace_\level,\\
 \discProj_\level \colon & H^1(\Omega) \to \discElementSpace_\level, && (\discProj_\level v, w)_\Omega = (v,w)_\Omega & \forall w \in \discElementSpace_\level.
\end{align*}
%
\subsection{Assumptions on discontinuous skeletal methods}
\label{sec:ass-local}
%
Here and in the following, $\lesssim$ means smaller than or equal to, up to a constant independent of the mesh size $h_\level$ and the multigrid level $\level$.
We also write $A \simeq B$ as a shortcut for ``$A \lesssim B$ and $B \lesssim A$''.
We assume that the numerical hybrid method, characterized in Section \ref{SEC:disc_skel_methods}, satisfies the following conditions for any $\mu \in \skeletalSpace_\level$:
\begin{itemize}
 \item The trace of the bulk unknown approximates the skeletal unknown:
 \begin{equation}
  \| \localU_\level \mu - \mu \|_\level \lesssim h_\level \| \mu \|_{a_\level}, \tag{HM1}\label{EQ:LS1}
 \end{equation}
 where $\| \cdot \|_{a_\level}$ denotes the norm induced by $a_\level$ on $M_\level$, i.e.,
 \begin{equation}\label{eq:norm.al}
   \| \cdot \|_{a_\level} \coloneq a_\level(\cdot, \cdot)^{\frac12}.
 \end{equation}
 \item The operators $\localQ_\level \mu$ and $\localU_\level \mu$ are continuous:
 \begin{equation}
  \| \localQ_\level \mu \|_\Omega \lesssim h^{-1}_\level \| \mu \|_\level \quad \text{ and } \quad \| \localU_\level \mu \|_\Omega \lesssim \| \mu \|_\level. \tag{HM2}\label{EQ:LS2}
 \end{equation}
\item The dual approximation $\localQ_\level \mu \sim -\nabla_\level \localU_\level \mu$, where $\nabla_\level$ denotes the broken gradient.
  That is,
 \begin{equation}
  \| \localQ_\level \mu + \nabla_\level \localU_\level \mu \|_\Omega \lesssim h_\level^{-1} \| \localU_\level \mu - \mu \|_\level. \tag{HM3}\label{EQ:LS3}
 \end{equation}
 \item Consistency with the standard linear finite element method: If $w \in \linElementSpace_\level$, we have
 \begin{equation}\label{EQ:LS4}
  \localU_\level \gamma_\level w = w \qquad \text{ and } \qquad \localQ_\level \gamma_\level w = - \nabla w, \tag{HM4}
 \end{equation}
 where $\gamma_\level$ is the trace operator to the skeleton $\faceSet_\level$ and
 \begin{equation*}
 \linElementSpace_\level \coloneq \bigl\{ v \in H^1_0(\Omega) \; \big| \; v_{|\elem} \in \polynomials_1(\elem) \;\; \forall \elem \in \mesh_\level\bigr\}.
 \end{equation*}
 \item Convergence of the skeletal unknown to the traces of the analytical solution. That is, if $m_\level$ is the skeletal function of the hybrid approximation to $u \in H^{1+\alpha}(\Omega)$, we have
 \begin{equation}
  \| m_\level - \skeletalProj_\level u\|_{a_\level}  \lesssim h_\level^{\alpha} \| u\|_{\alpha + 1}.\tag{HM5}\label{EQ:LS5}
 \end{equation}
 \item The usual bounds on the eigenvalues of the condensed discretization matrix hold:
 \begin{equation}
  \| \mu \|^2_\level \lesssim a_\level(\mu,\mu) \lesssim h^{-2}_\level \| \mu \|^2_\level. \tag{HM6}\label{EQ:LS6}
 \end{equation}
 \item If $\rho = \gamma_\level w$ for $w \in \linElementSpace_\level$, the stabilization satisfies
 \begin{equation}\label{EQ:LS7}
  s_\level(\rho, \mu) = 0. \tag{HM7}
 \end{equation}
 \item The global bilinear form is bounded by an HDG-type norm:
 \begin{equation}
  a_\level(\mu,\mu) \lesssim \| \localQ \mu \|^2_\Omega + \sum_{\elem \in \mesh_\level} \tfrac1{h_\elem} \| \localU \mu - \mu \|_{\partial \elem}^2. \tag{HM8}\label{EQ:LS9}
 \end{equation}
\end{itemize}
\begin{remark}[HDG methods]
 For the LDG-H (with $\tau_\level h_\level \lesssim 1$), the RT-H, and the BDM-H methods, \eqref{EQ:LS7} follows directly from \eqref{EQ:LS4}, and \eqref{EQ:LS9} follows directly from the definition of the bilinear form $a_h$. They use almost identical assumptions to prove multigrid convergence for the HDG, BDM-H, and RT-H methods. \eqref{EQ:LS2}--\eqref{EQ:LS6} have been used to prove the convergence of multigrid methods for HDG, while \eqref{EQ:LS1}, \eqref{EQ:LS7}, and \eqref{EQ:LS9} are novel assumptions that allow to generalize the convergence theory to HHO.
\end{remark}
%
\subsection{Assumptions on injection operators}
\label{sec:ass-injection}
%
Our analytical findings will rely on the following conditions to hold on all mesh levels $\level$:
\begin{enumerate}
 \item Boundedness:
 \begin{equation}\tag{IA1}\label{EQ:IA1}
  \| \injectionOp_\level \rho \|_\level \lesssim \| \rho \|_{\level - 1} \qquad \forall \rho \in \skeletalSpace_{\level - 1}.
 \end{equation}
 \item Conformity with linear finite elements:
 \begin{equation}\tag{IA2}\label{EQ:IA2}
  \injectionOp_\level \gamma_{\level - 1} w = \gamma_\level w  \qquad \forall w \in \linElementSpace_{\level - 1}.
 \end{equation}
\end{enumerate}
This way, we do not restrict ourselves to one specific injection operator. All injection operators in \cite{LuRK22a} satisfy the desired properties. An immediate consequence of \eqref{EQ:LS4} and \eqref{EQ:IA2} is
\begin{lemma}[Quasi-orthogonality]\label{LEM:quasi_orth}
 Let \eqref{EQ:LS4}, \eqref{EQ:LS7}, and \eqref{EQ:IA2} hold, then we have for all $\mu \in \skeletalSpace_{\level}$ and $w \in \linElementSpace_{\level - 1}$ that
 \begin{equation*}
  (\localQ_\level \mu - \localQ_{\level - 1} \projectionOp_{\level - 1} \mu, \nabla w)_\Omega = 0.
 \end{equation*}
\end{lemma}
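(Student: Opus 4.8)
The plan is to exploit the key fact that $\projectionOp_{\level-1}$ is the $a$-projection and that linear finite element functions interact trivially with both the local solvers (via \eqref{EQ:LS4}) and the stabilization (via \eqref{EQ:LS7}). Concretely, I would start by noting that the test function $w \in \linElementSpace_{\level-1}$ can be lifted to the skeleton: set $\rho \coloneq \gamma_{\level-1} w \in \skeletalSpace_{\level-1}$. By \eqref{EQ:IA2} we have $\injectionOp_\level \rho = \gamma_\level w$. The strategy is then to rewrite the $L^2(\Omega)$-inner product with $\nabla w$ as (minus) a flux term $\localQ$ evaluated against $\rho$, use the definition \eqref{EQ:projection_definition} of $\projectionOp_{\level-1}$ to transfer between levels, and finally subtract off the stabilization contributions, which vanish by \eqref{EQ:LS7}.

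In detail, the first step is the identity $-\nabla w = \localQ_{\level-1} \rho = \localQ_{\level-1}\gamma_{\level-1} w$, which is exactly the second half of \eqref{EQ:LS4} at level $\level-1$; likewise $-\nabla w = \localQ_\level \gamma_\level w = \localQ_\level \injectionOp_\level \rho$ at level $\level$. Therefore
\begin{align*}
 (\localQ_\level \mu - \localQ_{\level-1}\projectionOp_{\level-1}\mu, \nabla w)_\Omega
 &= -(\localQ_\level \mu, \localQ_\level \injectionOp_\level \rho)_\Omega + (\localQ_{\level-1}\projectionOp_{\level-1}\mu, \localQ_{\level-1}\rho)_\Omega.
\end{align*}
Next I would recognize each term as a bilinear form minus its stabilization: by \eqref{EQ:bilinear},
$(\localQ_\level \mu, \localQ_\level \injectionOp_\level \rho)_\Omega = a_\level(\mu, \injectionOp_\level \rho) - s_\level(\mu, \injectionOp_\level \rho)$ and similarly at level $\level-1$. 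Since $\injectionOp_\level\rho = \gamma_\level w$ with $w \in \linElementSpace_{\level-1} \subset \linElementSpace_\level$ (the spaces are nested because the meshes are refined), \eqref{EQ:LS7} gives $s_\level(\mu,\injectionOp_\level\rho) = 0$; and $s_{\level-1}(\projectionOp_{\level-1}\mu, \rho) = 0$ since $\rho = \gamma_{\level-1}w$ with $w \in \linElementSpace_{\level-1}$. Hence the expression reduces to $-a_\level(\mu, \injectionOp_\level \rho) + a_{\level-1}(\projectionOp_{\level-1}\mu, \rho)$, which is exactly zero by the defining relation \eqref{EQ:projection_definition} of $\projectionOp_{\level-1}$ applied with the test function $\mu' = \rho \in \skeletalSpace_{\level-1}$.

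The argument is essentially a chain of substitutions, so there is no serious analytic obstacle; the one point demanding care is the nesting $\linElementSpace_{\level-1} \subset \linElementSpace_\level$ needed to invoke \eqref{EQ:LS7} at level $\level$ for the function $\gamma_\level w$ — this holds because successive refinement of simplicial meshes makes the continuous piecewise-linear spaces nested, so a function that is piecewise $\polynomials_1$ on $\mesh_{\level-1}$ is also piecewise $\polynomials_1$ on $\mesh_\level$. A secondary bookkeeping point is to make sure that the $L^2(\Omega)$ pairing $(\cdot,\nabla w)_\Omega$ really equals the broken $(\cdot,\localQ w)$ pairing used in the bilinear form, which is immediate since $\nabla w$ is itself the broken gradient of $w$ (no jumps contribute). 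With these observations the identity follows directly.
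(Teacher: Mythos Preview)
Your proof is correct and follows essentially the same approach as the paper: set $\rho=\gamma_{\level-1}w$, use \eqref{EQ:LS4} and \eqref{EQ:IA2} (together with the nesting $\linElementSpace_{\level-1}\subset\linElementSpace_\level$) to identify $-\nabla w$ with $\localQ_{\level-1}\rho$ and $\localQ_\level\injectionOp_\level\rho$, invoke \eqref{EQ:LS7} to drop the stabilization terms so that the flux pairings become $a_{\level-1}(\projectionOp_{\level-1}\mu,\rho)$ and $a_\level(\mu,\injectionOp_\level\rho)$, and conclude by the definition \eqref{EQ:projection_definition} of $\projectionOp_{\level-1}$. The only cosmetic difference is that the paper first records $\localQ_{\level-1}\rho=\localQ_\level\injectionOp_\level\rho$ and then subtracts, whereas you substitute $\nabla w$ directly into the inner product; the logic is identical.
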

\begin{proof}
 For $\rho = \gamma_{\level - 1} w$, assumptions \eqref{EQ:IA2} and \eqref{EQ:LS4}, we obtain  using the embedding $\linElementSpace_{\level - 1} \subset \linElementSpace_\level$
 \begin{gather*}
  \localQ_{\level - 1} \rho = \localQ_{\level - 1} \gamma_{\level - 1} w = - \nabla w = \localQ_\level \gamma_\level w = \localQ_\level \injectionOp_\level \gamma_{\level - 1} w = \localQ_\level \injectionOp_\level \rho.
 \end{gather*}
 The definitions of $a_\level$, $a_{\level - 1}$, and \eqref{EQ:LS7} yield
 \begin{align*}
  a_{\level - 1}(\projectionOp_{\level - 1} \mu, \rho) &= (\localQ_{\level - 1} \projectionOp_{\level - 1} \mu, \localQ_{\level - 1} \rho)_\Omega, \\
  a_{\level}(\mu, \injectionOp_\level \rho) &= (\localQ_{\level} \mu,  \localQ_{\level} \injectionOp_\level \rho)_\Omega.
 \end{align*}
 The two lines are equal by definition \eqref{EQ:projection_definition} of the projection operator $\projectionOp_{\level - 1}$. Thus, taking the difference gives the result.
\end{proof}
\section{Multigrid algorithm and abstract convergence results} \label{SEC:convergence_results}
If \eqref{EQ:poisson_pb} has full elliptic regularity, which means that \eqref{EQ:regularity} holds for $\alpha = 1$, we use a standard (symmetric) V-cycle multigrid method. 
Otherwise, we use a variable V-cycle multigrid method to solve the system of linear equations arising from \eqref{EQ:condensed_pb}. 

We follow the lines of \cite{LuRK22a} and avoid a discussion about good smoothing operators. That is, we allow our smoother in smoothing step $i$
\begin{equation*}
 R^i_\level \colon \skeletalSpace_\level \to \skeletalSpace_\level
\end{equation*}
to fit the criteria of \cite{BrambleP1992}, which allow pointwise Jacobi and Gauss-Seidel methods. 
Next, we present the multigrid method as in \cite{BramblePX1991}. Afterwards, we present the abstract convergence results of \cite{DuanGTZ2007}.
%
\subsection{Multigrid algorithm}\label{SEC:multigrid_algorithm}
%
We recursively define the multigrid operator of the refinement level $\level$
\begin{equation*}
 B_\level \colon \skeletalSpace_\level \ni \mu \mapsto B_\level \mu \in \skeletalSpace_\level
\end{equation*}
with $\iterMgInner_{\level} \in \mathbb N \setminus \{ 0 \}$ smoothing steps on level $\level$: Let $B_0 = A^{-1}_0$ be the exact inverse of $A_\level$ on the coarsest level. For $\level > 0$, set $x^0 = 0 \in \skeletalSpace_\level$.
\begin{enumerate}
 \item Perform $\iterMgInner_{\level}$ smoothing steps
 \begin{equation*}
  x^\iterMgOuter = x^{\iterMgOuter-1} + R_\level^{\iterMgOuter} ( \mu - A_\level x^{\iterMgOuter-1} ).
 \end{equation*}
 \item Perform recursive multigrid step
 \begin{equation*}
  q = B_{\level-1} \projectionOrthogonalOP_{\level-1} ( \mu - A_\level x^{\iterMgInner_\level}),
 \end{equation*}
 and set $y^0 = x^{n_\level} + \injectionOp_\level q$.
 \item Perform $\iterMgInner_{\level}$ smoothing steps
 \begin{equation*}
  y^\iterMgOuter = y^{\iterMgOuter - 1} + R^{\iterMgOuter+\iterMgInner_\level}_\level ( \mu - A_\level y^{\iterMgOuter-1} ).
\end{equation*}
\item Set $B_\level \mu = y^{\iterMgInner_\level}$.
\end{enumerate}

If $\iterMgInner_\level = \iterMgInner$ independent of the level, we obtain the standard V-cycle. The variable V-cycle is characterized by
\begin{gather}\label{EQ:variable-v}
 \rho_1\iterMgInner_{\level} \le  \iterMgInner_{\level-1} \le \rho_2\iterMgInner_{\level},
\end{gather}
which needs to hold for all $\level \le L$ and uniform constants $1<\rho_1\le\rho_2$.
%
\subsection{Main convergence results}\label{SEC:main_convergence_result}
%
We use the results obtained in \cite{BramblePX1991,DuanGTZ2007}. These publications show convergence under three abstract assumptions, which are assumed to hold for all level $\level$. In order to illustrate these assumptions, let $\lambda^A_\level$ be the largest eigenvalue of $A_\level$, and
\begin{gather*}
 K_\level \coloneq \bigl(\mathds 1 - (\mathds 1 - R_\level A_\level) (\mathds 1 - R'_\level A_\level)\bigr) A^{-1}_\level,
\end{gather*}
where $\mathds 1$ is the identity matrix and $R'_\level$ is the transpose of $R_\level$. The aforementioned, abstract assumptions claim existence of constants $C_1, C_2, C_3 > 0$ independent of the mesh level $\level$ and of the function $\mu \in \skeletalSpace_\level$ satisfying:
\begin{itemize}
\item Regularity of approximation:
 \begin{equation}\label{EQ:precond1a}
  | a_\level (\mu - \injectionOp_\level \projectionOp_{\level-1} \mu, \mu) | \le C_1 \left( \frac{\| A_\level \mu \|^2_\level}{\lambda^A_\level} \right)^\alpha a_\level(\mu, \mu)^{1- \alpha}.\tag{A1}
 \end{equation}
 If $\alpha=1$ in \eqref{EQ:regularity}, \eqref{EQ:precond1a} simplifies to
 \begin{equation}\label{EQ:precond1}
  | a_\level(\mu - \injectionOp_\level \projectionOp_{\level-1} \mu, \mu) |
  \le C_1 \frac{\| A_\level \mu \|^2_\level}{\lambda^A_\level}. \tag{A1'}
 \end{equation}
\item Boundedness of the composition $\injectionOp_{\level} \circ \projectionOp_{\level-1}\colon M_{\level} \to M_{\level}$ of the injection and Ritz quasi-projection operators:
 \begin{equation}\label{EQ:precond2}
  \| \mu - \injectionOp_\level \projectionOp_{\level-1} \mu \|_{a_\level} \le C_2 \| \mu \|_{a_\level}. \tag{A2}
\end{equation}
\item Smoothing hypothesis:
 \begin{equation}\label{EQ:precond3}
  \frac{\| \mu \|^2_\level}{\lambda^A_\level} \le C_3 \langle K_\level \mu, \mu \rangle_\level. \tag{A3}
 \end{equation}
\end{itemize}
\begin{theorem}\label{TH:main_theorem}
  Let \eqref{EQ:regularity} hold with $\alpha = 1$, as well as \eqref{EQ:precond1}, \eqref{EQ:precond2}, and \eqref{EQ:precond3}.
  Then, for the standard V-cycle, for all $\level \ge 0$, and for all $\mu \in \skeletalSpace_\level$
 \begin{equation*}
  | a_\level ( \mu - B_\level A_\level \mu, \mu ) | \le \delta a_\level(\mu, \mu),
 \end{equation*}
 where
 \begin{equation*}
  \delta = \frac{C_1 C_3}{\iterMgInner - C_1 C_3} \qquad \text{with} \qquad \iterMgInner > 2 C_1 C_3.
 \end{equation*}
\end{theorem}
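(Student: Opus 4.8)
The plan is to recognize Theorem~\ref{TH:main_theorem} as a direct instance of the abstract V-cycle convergence theory for multigrid with non-nested spaces and non-inherited bilinear forms developed in \cite{BramblePX1991,DuanGTZ2007}, and to reduce the proof to checking that our data — the spaces $\skeletalSpace_\level$, the operators $A_\level$ of \eqref{eq:Al}, the injection $\injectionOp_\level$, the restriction $\projectionOrthogonalOP_{\level-1}$ (the $\langle\cdot,\cdot\rangle$-adjoint of $\injectionOp_\level$ by \eqref{EQ:L2_projection_definition}), and the symmetrized smoother $K_\level$ — together with hypotheses \eqref{EQ:precond1}, \eqref{EQ:precond2}, \eqref{EQ:precond3}, are exactly those required by that framework. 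Accordingly the only real work is setting up the dictionary between the cited abstract assumptions and \eqref{EQ:precond1}--\eqref{EQ:precond3}; no new idea is needed, and the mathematical substance of the paper lies rather in verifying \eqref{EQ:precond1}--\eqref{EQ:precond3} (Section~\ref{SEC:analysis}) from the method-specific assumptions \eqref{EQ:LS1}--\eqref{EQ:LS9} and \eqref{EQ:IA1}--\eqref{EQ:IA2}.

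First I would set up the induction on the level $\level$. On the coarsest level $B_0 = A_0^{-1}$, so $\mu - B_0 A_0\mu = 0$ and the estimate holds trivially. For the inductive step, the linchpin connecting the algorithm of Section~\ref{SEC:multigrid_algorithm} to the analysis is the compatibility identity $\projectionOrthogonalOP_{\level-1} A_\level = A_{\level-1} \projectionOp_{\level-1}$, which follows by unwinding the definitions \eqref{EQ:L2_projection_definition}, \eqref{EQ:projection_definition}, \eqref{eq:Al}: for $\mu\in\skeletalSpace_\level$, $\nu\in\skeletalSpace_{\level-1}$, $\langle\projectionOrthogonalOP_{\level-1}A_\level\mu,\nu\rangle_{\level-1} = \langle A_\level\mu,\injectionOp_\level\nu\rangle_\level = a_\level(\mu,\injectionOp_\level\nu) = a_{\level-1}(\projectionOp_{\level-1}\mu,\nu) = \langle A_{\level-1}\projectionOp_{\level-1}\mu,\nu\rangle_{\level-1}$. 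Consequently the coarse-grid correction step of the V-cycle has error propagation operator $\mathds 1 - \injectionOp_\level B_{\level-1}\projectionOrthogonalOP_{\level-1}A_\level = \mathds 1 - \injectionOp_\level\projectionOp_{\level-1} + \injectionOp_\level(\mathds 1 - B_{\level-1}A_{\level-1})\projectionOp_{\level-1}$, and the full V-cycle error operator at level $\level$ is the symmetric product of the pre- and post-smoothing error operators, governed by $K_\level$, with this coarse-grid operator.

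I would then estimate the two contributions separately, following \cite{BramblePX1991,DuanGTZ2007}. The "ideal" part $\mathds 1 - \injectionOp_\level\projectionOp_{\level-1}$ is controlled in the $a_\level$-energy by \eqref{EQ:precond1} (which trades energy against $\|A_\level\mu\|_\level^2/\lambda^A_\level$) and, for the stability needed to close the recursion, by \eqref{EQ:precond2}; the recursive term $\injectionOp_\level(\mathds 1 - B_{\level-1}A_{\level-1})\projectionOp_{\level-1}$ is handled by the induction hypothesis on level $\level-1$ together with the energy identity $a_{\level-1}(\projectionOp_{\level-1}\rho,\projectionOp_{\level-1}\rho) = a_\level(\rho,\injectionOp_\level\projectionOp_{\level-1}\rho)$; and the energy reduction of each smoothing sweep is quantified by \eqref{EQ:precond3}. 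Combining these as in the cited works yields a recursion of the form $\delta_\level \le \tfrac{C_1 C_3}{\iterMgInner}(1 + \delta_{\level-1})$ for the per-level contraction factor $\delta_\level$. The increasing map $t\mapsto \tfrac{C_1 C_3}{\iterMgInner}(1+t)$ has fixed point $\delta = \tfrac{C_1 C_3}{\iterMgInner - C_1 C_3}$, which is $<1$ precisely when $\iterMgInner > 2C_1 C_3$; since $\delta_0 = 0 \le \delta$, the induction gives $\delta_\level \le \delta$ for all $\level$, which is the claim.

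The main obstacle, and the reason one should cite \cite{BramblePX1991,DuanGTZ2007} rather than redo it, is the non-nested, non-inherited bookkeeping: $\injectionOp_\level$ is not an inclusion and $\projectionOp_{\level-1}$ is not idempotent, so the telescoping that is immediate in the conforming case must be carried out through the defining relations of $\projectionOp_{\level-1}$ and $\projectionOrthogonalOP_{\level-1}$, and one must track that the smoother entering \eqref{EQ:precond3} is consistent with the pre- and post-smoothing sweeps prescribed in Section~\ref{SEC:multigrid_algorithm}. Since all of this is exactly the content of the abstract theory, I would present Theorem~\ref{TH:main_theorem} as a corollary of \cite{BramblePX1991,DuanGTZ2007} once this identification is made explicit.
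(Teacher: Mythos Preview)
Your proposal is correct and matches the paper's approach: the paper's proof is the single sentence ``This result is an immediate consequence of~[Thm.~3.1]\{DuanGTZ2007\},'' and you arrive at the same conclusion, merely spelling out the dictionary (the compatibility identity $\projectionOrthogonalOP_{\level-1}A_\level = A_{\level-1}\projectionOp_{\level-1}$, the error-propagation decomposition, and the recursion for $\delta_\level$) that the paper leaves implicit in the citation.
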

\begin{proof}
 This result is an immediate consequence of~\cite[Thm.~3.1]{DuanGTZ2007}.
\end{proof}

\begin{theorem}\label{TH:variable}
 Let \eqref{EQ:regularity} hold for $\alpha \in (\tfrac 1 2, 1]$. Further, assumptions \eqref{EQ:precond1a}, \eqref{EQ:precond2}, and \eqref{EQ:precond3} shall be true. If, additionally,~\eqref{EQ:variable-v} holds with $\rho_1,\rho_2>1$, then we have, for all $\level \ge 0$ and all $\mu \in \skeletalSpace_\level$, that
  \begin{equation*}
  \eta_0 a_\level ( \mu, \mu )
   \le a_\level ( B_\level A_\level \mu, \mu )
   \le \eta_1 a_\level ( \mu, \mu )
 \end{equation*}
 holds with
  \begin{equation*}
  \eta_0 \ge \frac{\iterMgInner_\level^\alpha}{M+\iterMgInner_\level^\alpha},
  \qquad
  \eta_1 \le\frac{M+\iterMgInner_\level^\alpha}{\iterMgInner_\level^\alpha},
 \end{equation*}
 where the constant $M > 0$ is independent of $\level$. Hence, the condition number of $B_\level A_\level$ does not depend on $\level$.
\end{theorem}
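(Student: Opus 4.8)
The plan is to derive the statement as a direct application of the abstract convergence theory for variable V-cycle multigrid with \emph{noninherited} forms and \emph{nonnested} spaces developed in \cite{BramblePX1991,DuanGTZ2007}; consequently, the only real content is to check that the three hypotheses \eqref{EQ:precond1a}, \eqref{EQ:precond2}, \eqref{EQ:precond3} are exactly the structural assumptions of that theory for our choice of transfer operators. First I would record the algebraic structure: the coarse-grid correction uses the injection $\injectionOp_\level$ together with $\projectionOrthogonalOP_{\level-1}$ applied to the residual, the smoothing sweeps in Steps~(1) and~(3) of Section~\ref{SEC:multigrid_algorithm} are applied in the symmetric pattern encoded in $K_\level$, and $B_0=A_0^{-1}$; hence $B_\level$ is symmetric positive definite with respect to $\langle\cdot,\cdot\rangle_\level$, so $B_\level A_\level$ is self-adjoint in $a_\level$ and a two-sided spectral bound is exactly a bound on its condition number. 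The identity that replaces Galerkin orthogonality is the definition \eqref{EQ:projection_definition} of the Ritz quasi-projection, $a_{\level-1}(\projectionOp_{\level-1}\rho,\mu)=a_\level(\rho,\injectionOp_\level\mu)$, which is precisely the compatibility condition required by the abstract framework.

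Next I would match the three hypotheses one by one. The smoothing property \eqref{EQ:precond3}, together with the fact that the $R^i_\level$ satisfy the criteria of \cite{BrambleP1992}, furnishes the usual pair of smoothing estimates (an $A_\level$-contraction bound and the bound relating $\|\mu\|_\level^2/\lambda^A_\level$ to $\langle K_\level\mu,\mu\rangle_\level$). Assumption \eqref{EQ:precond2} is the uniform boundedness $\|\mu-\injectionOp_\level\projectionOp_{\level-1}\mu\|_{a_\level}\le C_2\|\mu\|_{a_\level}$ of the coarse-correction operator, which prevents the recursion constants from blowing up through the hierarchy. Assumption \eqref{EQ:precond1a} is the regularity--approximation estimate: it controls $|a_\level(\mu-\injectionOp_\level\projectionOp_{\level-1}\mu,\mu)|$ by $C_1(\|A_\level\mu\|_\level^2/\lambda^A_\level)^\alpha a_\level(\mu,\mu)^{1-\alpha}$, i.e.\ only a Hölder interpolation with exponent $\alpha<1$ between the $\|\cdot\|_\level$-type and the energy norms, reflecting the weakened regularity \eqref{EQ:regularity}. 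Feeding these three ingredients into the variable V-cycle estimate of \cite{BramblePX1991,DuanGTZ2007} (the variable-cycle counterpart of the result invoked for Theorem~\ref{TH:main_theorem}) and performing the standard downward induction on $\level$, one obtains the level-$\level$ error-reduction factor in the form $M/(M+\iterMgInner_\level^\alpha)$ with $M$ depending only on $C_1,C_2,C_3$ and on $\rho_1,\rho_2$; rearranging then gives the stated $\eta_0$ and $\eta_1$.

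The place where the variable V-cycle is genuinely needed --- and the main obstacle in the induction --- is that with $\alpha<1$ the approximation estimate \eqref{EQ:precond1a} is not strong enough to absorb the perturbation coming from the inexact coarse solve by a fixed constant factor; one must instead let the number of smoothing steps compensate. This is exactly the role of \eqref{EQ:variable-v}: the condition $\rho_1\iterMgInner_\level\le\iterMgInner_{\level-1}\le\rho_2\iterMgInner_\level$ with $\rho_1>1$ forces $\iterMgInner_\level$ to grow geometrically as $\level$ decreases, so that the products of the level-wise factors $M/(M+\iterMgInner_\level^\alpha)$ telescope to a bound independent of $L$, while the upper cutoff $\rho_2>1$ keeps the total work comparable to that of a single fine-grid smoothing sweep. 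Carrying out this telescoping carefully --- verifying that $M$ picks up no hidden $\level$-dependence along the chain of inequalities, and noting that the quasi-orthogonality of Lemma~\ref{LEM:quasi_orth} is not needed here (it enters only in the separate verification of \eqref{EQ:precond1a} for the concrete HHO scheme) --- is the one computation I would actually perform; everything else is bookkeeping layered on top of the cited abstract theorem.
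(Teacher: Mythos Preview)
Your proposal is correct and follows essentially the same approach as the paper: both invoke the abstract variable V-cycle result of \cite{BramblePX1991} (the paper simply cites \cite[Thm.~6]{BramblePX1991} as an immediate consequence), and your additional discussion merely spells out why the hypotheses \eqref{EQ:precond1a}--\eqref{EQ:precond3} match the structural assumptions of that framework.
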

\begin{proof}
This result is an immediate consequence of~\cite[Thm.~6]{BramblePX1991}.
\end{proof}
%
\section{Convergence analysis}\label{SEC:analysis}
%
We follow the lines of \cite{LuRK22a} and extend their results to our more general framework. Thus, the theorems and lemmas in the following sections will be very similar to the ones in \cite{LuRK22a}. If Lu et al. \cite{LuRK22a} have not used any HDG-specific arguments in their proofs, we also accept the results to hold in our framework and cite them. Otherwise, we will redo the respective proofs. 
%
\subsection{Energy boundedness of the injection and proof of~\eqref{EQ:precond2}}
%
We provide a boundedness result for $\injectionOp_\level$ concerning several measures. In particular, we investigate the injection operator's energy boundedness and the ``Ritz projection'' to show \eqref{EQ:precond2}.
To this end, we use the averaging linear interpolation 
\begin{equation}\label{eq:averagingOp}
  \averagingOp_\level\colon\discElementSpace_\level \to \linElementSpace_\level, \qquad     \averagingOp_\level u ({\vec x}_{\vec a}) \coloneq \avg{u}_{\vec a} \text{ for any internal mesh vertex $\vec a$}.
\end{equation}
Here, $\vec x_{\vec a}$ is the point that corresponds to vertex $\vec a$, and $\avg{u}_{\vec a}$ describes the arithmetic mean of all values that $u$ attains in $\vec a$. For $\vec x\in \partial\Omega$, we set $\averagingOp_\level u (\vec x) \coloneq 0$.
\begin{lemma}\label{LEM:avg_bound_by_localQ}
 Assuming \eqref{EQ:LS1} and \eqref{EQ:LS3}, we have for all $\level$ that 
 \begin{align}
  \| \nabla \averagingOp_\level \localU_\level \mu \|_\Omega ~\lesssim~ & \| \mu \|_{a_\level}, && \forall \mu \in \skeletalSpace_\level,\label{EQ:avg_stability} \\
  \| \localU_\level \lambda - \averagingOp_\level \localU_\level \lambda\|_\Omega ~\lesssim~ & h_\level \| \mu \|_{a_\level} && \forall \lambda \in \skeletalSpace_\level.\label{EQ:avg_error}
 \end{align}
\end{lemma}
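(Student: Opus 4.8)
The plan is to estimate the averaging interpolant $\averagingOp_\level$ by comparing it to the broken function $\localU_\level\mu$ and exploiting that the jump of $\localU_\level\mu$ across interelement faces is controlled by $\|\localU_\level\mu - \mu\|_\level$ (since $\mu$ is single-valued on each face). The key tool is the standard estimate for the averaging (Oswald/Clément-type) operator on simplicial meshes: for a broken polynomial $v$,
\begin{equation*}
 \| v - \averagingOp_\level v\|_\Omega^2 + h_\level^2\|\nabla_\level(v - \averagingOp_\level v)\|_\Omega^2 \lesssim \sum_{\face\in\faceSet_\level} h_\face \| \jump{v} \|_\face^2,
\end{equation*}
where $\jump{v}$ denotes the jump across $\face$ (and the trace itself on boundary faces, which vanishes here because $\mu$ vanishes on $\partial\Omega$ and $\averagingOp_\level$ is set to zero there). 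This is where quasi-uniformity and shape-regularity of $(\mesh_\level)_\level$ enter.

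First I would apply this estimate with $v = \localU_\level\mu$. Since $\mu_{|\face}$ is single-valued, on an interior face $\face$ shared by $\elem_1,\elem_2$ we have $\jump{\localU_\level\mu} = (\localU_\level\mu)_{|\elem_1} - \mu + \mu - (\localU_\level\mu)_{|\elem_2}$, so $\|\jump{\localU_\level\mu}\|_\face \le \|\localU_{\elem_1}\mu - \mu\|_\face + \|\localU_{\elem_2}\mu - \mu\|_\face$; on boundary faces $\jump{\localU_\level\mu}$ is just the trace $(\localU_\level\mu)_{|\elem} - \mu$ (using $\mu_{|\face}=0$). Summing, $\sum_{\face} h_\face\|\jump{\localU_\level\mu}\|_\face^2 \lesssim \sum_{\elem} h_\elem \|\localU_\level\mu - \mu\|_{\partial\elem}^2 \simeq \|\localU_\level\mu - \mu\|_\level^2$, the last equivalence coming from the scaling $|\elem|/|\partial\elem| \simeq h_\elem$ in the definition of $\langle\cdot,\cdot\rangle_\level$. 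Then \eqref{EQ:LS1} gives $\|\localU_\level\mu - \mu\|_\level \lesssim h_\level\|\mu\|_{a_\level}$, which immediately yields \eqref{EQ:avg_error} (the $L^2$-part of the averaging estimate) after absorbing the $h_\level^2$ factor against the $h_\level$ in $\sum h_\face\|\jump{\cdot}\|^2$ — more precisely $\|\localU_\level\mu - \averagingOp_\level\localU_\level\mu\|_\Omega^2 \lesssim \sum_\face h_\face\|\jump{\localU_\level\mu}\|_\face^2 \lesssim h_\level \cdot h_\level^2\|\mu\|_{a_\level}^2 / h_\level \ldots$ — one has to be a touch careful with the powers of $h$, but quasi-uniformity makes $h_\face\simeq h_\level$ throughout, so it reduces to $h_\level^2\|\localU_\level\mu-\mu\|_\level^2/h_\level^2 \cdot h_\level^2$, i.e.\ $\lesssim h_\level^2\|\mu\|_{a_\level}^2$.

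For \eqref{EQ:avg_stability} I would write $\nabla\averagingOp_\level\localU_\level\mu = \nabla_\level(\averagingOp_\level\localU_\level\mu - \localU_\level\mu) + \nabla_\level\localU_\level\mu$ and bound the two terms separately: the first by the $H^1$-part of the averaging estimate, $\|\nabla_\level(\averagingOp_\level\localU_\level\mu - \localU_\level\mu)\|_\Omega^2 \lesssim h_\level^{-2}\sum_\face h_\face\|\jump{\localU_\level\mu}\|_\face^2 \lesssim h_\level^{-2}\|\localU_\level\mu-\mu\|_\level^2 \lesssim \|\mu\|_{a_\level}^2$ using \eqref{EQ:LS1} again; and for the second, $\|\nabla_\level\localU_\level\mu\|_\Omega \le \|\localQ_\level\mu + \nabla_\level\localU_\level\mu\|_\Omega + \|\localQ_\level\mu\|_\Omega \lesssim h_\level^{-1}\|\localU_\level\mu - \mu\|_\level + \|\mu\|_{a_\level}$ by \eqref{EQ:LS3} and the definition of $\|\cdot\|_{a_\level}$ (note $\|\localQ_\level\mu\|_\Omega \le \|\mu\|_{a_\level}$ directly from \eqref{EQ:bilinear} since $s_\level$ is positive semidefinite), and again \eqref{EQ:LS1} turns the first term into $\lesssim\|\mu\|_{a_\level}$. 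Collecting gives \eqref{EQ:avg_stability}.

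The main obstacle is not conceptual but bookkeeping: making sure the boundary faces are handled correctly (they contribute to the averaging estimate through the trace, and one needs $\mu_{|\face}=0$ on $\partial\Omega$ together with $\averagingOp_\level=0$ on $\partial\Omega$ to identify the boundary "jump" with $\localU_\level\mu - \mu$), and keeping the powers of $h_\level$ consistent throughout — here the quasi-uniformity hypothesis $h_\level\le c_\textup{uni}\min_\elem h_\elem$ is what lets me replace every local $h_\elem$ or $h_\face$ by the global $h_\level$ without loss. One should also note that the right-hand side of \eqref{EQ:avg_error} is written in terms of $\mu$ while the left-hand side uses $\lambda$; this is evidently a typo and the intended statement has the same symbol on both sides.
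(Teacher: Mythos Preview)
Your approach is correct and is essentially the same as the one the paper invokes (it defers to \cite[Lem.\ 5.2]{LuRK22a}, whose proof proceeds exactly via the Oswald/averaging estimate, the jump control $\sum_\face h_\face\|\jump{\localU_\level\mu}\|_\face^2 \lesssim \|\localU_\level\mu-\mu\|_\level^2$, and then \eqref{EQ:LS1} and \eqref{EQ:LS3}). The only cosmetic issue is that your $h$-power bookkeeping for \eqref{EQ:avg_error} is more tangled than necessary: since $\|\cdot\|_\level^2 \simeq \sum_\face h_\face\|\cdot\|_\face^2$ already carries the right scaling, the chain is simply $\|\localU_\level\mu - \averagingOp_\level\localU_\level\mu\|_\Omega^2 \lesssim \|\localU_\level\mu-\mu\|_\level^2 \lesssim h_\level^2\|\mu\|_{a_\level}^2$, with no extra factors to absorb; and your observation about the $\lambda$/$\mu$ typo is correct.
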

\begin{proof}
 This can be shown analogously to \cite[Lem.\ 5.2]{LuRK22a}. However, their (LS1) needs to be replaced by our version of \eqref{EQ:LS1}, which changes the right-hand sides in \eqref{EQ:avg_stability} and \eqref{EQ:avg_error}.
\end{proof}

\begin{lemma}\label{LEM:diff_bound_by_q}
 Assuming \eqref{EQ:LS1} and \eqref{EQ:LS3}, we have
 \begin{equation*}
  \| \mu - \gamma_\level \averagingOp_\level \localU_\level \mu \|_\level \lesssim h_\level \| \mu \|_{a_\level} \qquad \forall \mu \in \skeletalSpace_\level.
 \end{equation*}
\end{lemma}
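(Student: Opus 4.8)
The plan is to bound $\| \mu - \gamma_\level \averagingOp_\level \localU_\level \mu \|_\level$ by inserting the trace $\gamma_\level \localU_\level \mu$ as an intermediate quantity and splitting via the triangle inequality:
\[
\| \mu - \gamma_\level \averagingOp_\level \localU_\level \mu \|_\level
\le \| \mu - \gamma_\level \localU_\level \mu \|_\level + \| \gamma_\level \localU_\level \mu - \gamma_\level \averagingOp_\level \localU_\level \mu \|_\level.
\]
The first term is directly controlled by assumption \eqref{EQ:LS1}, which gives $\| \localU_\level \mu - \mu \|_\level \lesssim h_\level \| \mu \|_{a_\level}$; note $\| \mu - \gamma_\level \localU_\level \mu \|_\level = \| \localU_\level \mu - \mu \|_\level$ since $\| \cdot \|_\level$ is the skeletal norm and the difference of the (already skeletal-restricted) functions is what appears in \eqref{EQ:LS1}.

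For the second term, the key is a discrete trace inequality together with the $L^2$-bound from Lemma~\ref{LEM:avg_bound_by_localQ}. First I would apply a trace inequality on each $\partial \elem$ of the form $\| v \|_{\partial \elem} \lesssim h_\elem^{-1/2} \| v \|_\elem$ (valid on the finite-dimensional spaces $\discElementSpace_\elem$ and $\linElementSpace_\level$, using mesh regularity and quasi-uniformity), applied to $v = (\localU_\level \mu - \averagingOp_\level \localU_\level \mu)_{|\elem}$. Recalling that $\| \rho \|_\level^2 = \sum_\elem \frac{|\elem|}{|\partial \elem|} \| \rho \|_{\partial \elem}^2$ and that $\frac{|\elem|}{|\partial \elem|} \simeq h_\elem$ by quasi-uniformity, the weight cancels the $h_\elem^{-1}$ from the trace inequality, so that $\| \gamma_\level(\localU_\level \mu - \averagingOp_\level \localU_\level \mu) \|_\level \lesssim \| \localU_\level \mu - \averagingOp_\level \localU_\level \mu \|_\Omega$. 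Then \eqref{EQ:avg_error} of Lemma~\ref{LEM:avg_bound_by_localQ} bounds this by $h_\level \| \mu \|_{a_\level}$, which completes the estimate.

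The main obstacle, such as it is, is bookkeeping the scaling: one must be careful that the weight $\frac{|\elem|}{|\partial \elem|}$ in $\langle \cdot, \cdot \rangle_\level$ interacts correctly with the element-local trace inequality so that no stray powers of $h_\level$ survive; quasi-uniformity and shape-regularity of the mesh family are exactly what make this work cleanly. Everything else is a direct invocation of \eqref{EQ:LS1} and the already-proved Lemma~\ref{LEM:avg_bound_by_localQ}, so this is essentially a two-line argument once the trace inequality and the scaling of $\langle \cdot, \cdot \rangle_\level$ are in place; indeed the result should follow "analogously to \cite{LuRK22a}" with our \eqref{EQ:LS1} substituted for theirs.
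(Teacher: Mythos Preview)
Your proposal is correct and matches the intended approach: the paper simply defers to \cite[Lem.\ 5.3]{LuRK22a} with the modified \eqref{EQ:LS1}, and the argument there is precisely the triangle-inequality split you describe, using \eqref{EQ:LS1} for the first term and a discrete trace inequality combined with \eqref{EQ:avg_error} for the second.
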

\begin{proof}
 This is \cite[Lem.\ 5.3]{LuRK22a}, but with our version of \eqref{EQ:LS1}.
\end{proof}

\begin{lemma}\label{TH:conv_result}
 Let \eqref{EQ:IA1}, \eqref{EQ:IA2}, \eqref{EQ:LS1}--\eqref{EQ:LS4} hold. We have
 \begin{align}
  \| \localQ_\level \injectionOp_\level \mu \|_\Omega ~\lesssim~ & \| \mu \|_{a_{\level -1}}, \label{EQ:Q_mehses}\\
  \| \localU_{\level - 1} \mu - \localU_\level \injectionOp_\level \mu \|_\Omega ~\lesssim~ & h_{\level - 1} \| \mu \|_{a_{\level - 1}} \lesssim h_\level \| \mu \|_{a_{\level - 1}}\label{EQ:second_ineq}
 \end{align}
 for all $\mu \in \skeletalSpace_{\level - 1}$.  If, additionally, \eqref{EQ:LS9} holds, we have
 \begin{align} 
   \| \injectionOp_\level \mu \|_{a_\level}
   & \lesssim
   \| \mu \|_{a_{\level-1}}
   && \forall \mu \in \skeletalSpace_{\level - 1},\label{EQ:i_stable}\\
   \| \projectionOp_{\level - 1} \mu \|_{a_{\level-1}}
   & \lesssim
     \| \mu \|_{a_\level}
   && \forall \mu \in \skeletalSpace_\level.\label{EQ:ap_stable}
 \end{align}
\end{lemma}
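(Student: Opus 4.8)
The four estimates split naturally into two groups. The first two, \eqref{EQ:Q_mehses} and \eqref{EQ:second_ineq}, are "$L^2$-type" bounds that compare the coarse local solvers with the fine local solvers composed with the injection; these should follow by a telescoping/triangle-inequality argument that inserts the averaging interpolant $\averagingOp$ and exploits that linear finite-element functions are reproduced exactly by both the local solvers (via \eqref{EQ:LS4}) and the injection (via \eqref{EQ:IA2}). The last two, \eqref{EQ:i_stable} and \eqref{EQ:ap_stable}, are the energy-stability statements, and they are where \eqref{EQ:LS9} (the HDG-type upper bound on $a_\level$) enters.

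\textbf{Step 1: the flux bound \eqref{EQ:Q_mehses}.} I would write $\localQ_\level \injectionOp_\level \mu = -\nabla \averagingOp_{\level-1}\localU_{\level-1}\mu + (\localQ_\level\injectionOp_\level\mu + \nabla\averagingOp_{\level-1}\localU_{\level-1}\mu)$. Set $w \coloneq \averagingOp_{\level-1}\localU_{\level-1}\mu \in \linElementSpace_{\level-1} \subset \linElementSpace_\level$. By \eqref{EQ:LS4} and \eqref{EQ:IA2}, $\localQ_\level \injectionOp_\level \gamma_{\level-1}w = \localQ_\level\gamma_\level w = -\nabla w$, so the correction term equals $\localQ_\level\injectionOp_\level(\mu - \gamma_{\level-1}w)$, which I bound using \eqref{EQ:LS2} (continuity of $\localQ_\level$, giving a factor $h_\level^{-1}$), then \eqref{EQ:IA1} (boundedness of $\injectionOp_\level$), then Lemma \ref{LEM:diff_bound_by_q} (which controls $\|\mu - \gamma_{\level-1}w\|_{\level-1}$ by $h_{\level-1}\|\mu\|_{a_{\level-1}}$); the $h_{\level-1}^{-1}\cdot h_{\level-1}$ cancel after using \eqref{EQ:cref}. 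The leading term $\|\nabla w\|_\Omega = \|\nabla\averagingOp_{\level-1}\localU_{\level-1}\mu\|_\Omega$ is bounded by $\|\mu\|_{a_{\level-1}}$ via Lemma \ref{LEM:avg_bound_by_localQ}, estimate \eqref{EQ:avg_stability}.

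\textbf{Step 2: the $L^2$ bound \eqref{EQ:second_ineq}.} Same bridge function $w$. Write $\localU_{\level-1}\mu - \localU_\level\injectionOp_\level\mu = (\localU_{\level-1}\mu - w) + (w - \localU_\level\injectionOp_\level\mu)$. The first difference is $\localU_{\level-1}\mu - \averagingOp_{\level-1}\localU_{\level-1}\mu$, bounded by $h_{\level-1}\|\mu\|_{a_{\level-1}}$ via \eqref{EQ:avg_error}. For the second, since $w = \localU_\level\gamma_\level w = \localU_\level\injectionOp_\level\gamma_{\level-1}w$ by \eqref{EQ:LS4} and \eqref{EQ:IA2}, it equals $\localU_\level\injectionOp_\level(\gamma_{\level-1}w - \mu)$, which I bound by $\|\injectionOp_\level(\gamma_{\level-1}w - \mu)\|_\level$ using the $\localU_\level$-continuity in \eqref{EQ:LS2}, then by $\|\gamma_{\level-1}w - \mu\|_{\level-1}$ using \eqref{EQ:IA1}, then by $h_{\level-1}\|\mu\|_{a_{\level-1}}$ using Lemma \ref{LEM:diff_bound_by_q}. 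The final $\lesssim h_\level\|\mu\|_{a_{\level-1}}$ is just \eqref{EQ:cref}.

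\textbf{Step 3: energy stability \eqref{EQ:i_stable} and \eqref{EQ:ap_stable}.} For \eqref{EQ:i_stable} I apply \eqref{EQ:LS9} with $\mu$ replaced by $\injectionOp_\level\mu$: $a_\level(\injectionOp_\level\mu,\injectionOp_\level\mu) \lesssim \|\localQ_\level\injectionOp_\level\mu\|_\Omega^2 + \sum_\elem h_\elem^{-1}\|\localU_\level\injectionOp_\level\mu - \injectionOp_\level\mu\|_{\partial\elem}^2$. The first term is \eqref{EQ:Q_mehses}. For the second (quasi-uniformity lets me replace $h_\elem^{-1}$ by $h_\level^{-1}$ up to a constant, and $\sum_\elem h_\level^{-1}\|\cdot\|_{\partial\elem}^2 \simeq h_\level^{-2}\|\cdot\|_\level^2$ by the scaling of $\langle\cdot,\cdot\rangle_\level$), I insert $\localU_{\level-1}\mu$ and the bridge $w$: split $\localU_\level\injectionOp_\level\mu - \injectionOp_\level\mu$ into $(\localU_\level\injectionOp_\level\mu - \localU_{\level-1}\mu) + (\localU_{\level-1}\mu - \mu)$ after noting (via \eqref{EQ:LS4},\eqref{EQ:IA2}, as in Lemma \ref{LEM:quasi_orth}) that $\injectionOp_\level\gamma_{\level-1}w = \gamma_\level w$ and $\localU_{\level-1}\gamma_{\level-1}w=w$, so $\localU_\level\injectionOp_\level\mu - \injectionOp_\level\mu = \localU_\level\injectionOp_\level(\mu-\gamma_{\level-1}w) - \injectionOp_\level(\mu-\gamma_{\level-1}w) + (w - w)$; wait—more simply, use $\|\localU_\level\injectionOp_\level\mu - \injectionOp_\level\mu\|_\level \le \|\localU_\level\injectionOp_\level\mu - \localU_{\level-1}\mu\|_\Omega + \|\localU_{\level-1}\mu - \mu\|_{\level-1} + \|(\injectionOp_\level - \gamma_\level\circ(\text{stuff}))\ldots\|$. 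The clean route: bound $\|\localU_\level\injectionOp_\level\mu - \injectionOp_\level\mu\|_\level$ by a triangle inequality through $\gamma_\level\averagingOp_{\level-1}\localU_{\level-1}\mu$, using \eqref{EQ:second_ineq}, Lemma \ref{LEM:diff_bound_by_q} at level $\level-1$, \eqref{EQ:IA1}, \eqref{EQ:IA2}/\eqref{EQ:LS4}, and \eqref{EQ:cref}, to get $\lesssim h_\level\|\mu\|_{a_{\level-1}}$; the $h_\level^{-2}\cdot h_\level^2$ then cancels. Finally \eqref{EQ:ap_stable} follows from \eqref{EQ:i_stable} by duality: take $\mu\in\skeletalSpace_\level$, then $\|\projectionOp_{\level-1}\mu\|_{a_{\level-1}}^2 = a_{\level-1}(\projectionOp_{\level-1}\mu,\projectionOp_{\level-1}\mu) = a_\level(\mu,\injectionOp_\level\projectionOp_{\level-1}\mu) \le \|\mu\|_{a_\level}\|\injectionOp_\level\projectionOp_{\level-1}\mu\|_{a_\level} \lesssim \|\mu\|_{a_\level}\|\projectionOp_{\level-1}\mu\|_{a_{\level-1}}$ by Cauchy--Schwarz in $a_\level$, the defining relation \eqref{EQ:projection_definition}, and \eqref{EQ:i_stable}; divide through.

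\textbf{Main obstacle.} The routine parts are the triangle-inequality bookkeeping; the one genuinely delicate point is the boundary-term estimate in Step 3, i.e.\ controlling $\sum_\elem h_\elem^{-1}\|\localU_\level\injectionOp_\level\mu - \injectionOp_\level\mu\|_{\partial\elem}^2$ by $\|\mu\|_{a_{\level-1}}^2$. This requires correctly converting the sum of face-$L^2$-norms into the $\|\cdot\|_\level$ norm (using quasi-uniformity and the scaling of $\langle\cdot,\cdot\rangle_\level$), and then chaining \eqref{EQ:second_ineq} with a level-$(\level-1)$ application of Lemma \ref{LEM:diff_bound_by_q} and \eqref{EQ:IA1}; getting the powers of $h_\level$ to cancel exactly is where care is needed. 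This is essentially the step where the new assumption \eqref{EQ:LS9} does its work, so it is also the place where the proof departs from \cite{LuRK22a}. I expect the rest to transcribe with only cosmetic changes from \cite[Lem.\ 5.4 and 5.5]{LuRK22a}.
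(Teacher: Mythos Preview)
Your plan is correct, and Steps 1, 2, and the derivation of \eqref{EQ:ap_stable} from \eqref{EQ:i_stable} by duality coincide with the paper's proof (the paper simply cites \cite[Lem.\ 5.1]{LuRK22a} for \eqref{EQ:Q_mehses} and \eqref{EQ:second_ineq}, and your argument is the standard one behind that reference; the duality argument for \eqref{EQ:ap_stable} is identical).

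The one genuine difference is in how the boundary term $\sum_{\elem} h_\elem^{-1}\|\localU_\level\injectionOp_\level\mu - \injectionOp_\level\mu\|_{\partial\elem}^2$ in Step~3 is handled. You keep the same bridge $w=\averagingOp_{\level-1}\localU_{\level-1}\mu$ and chain \eqref{EQ:second_ineq}, \eqref{EQ:avg_error}, \eqref{EQ:IA1}--\eqref{EQ:IA2}, and Lemma~\ref{LEM:diff_bound_by_q} so that the factor $h_\level^{-2}$ coming from the norm conversion is cancelled by an $h_\level^2$ coming from the bridging estimates. The paper instead splits through $\localU_{\level-1}\mu$ (not $\gamma_\level w$): the piece $\localU_\level\injectionOp_\level\mu-\localU_{\level-1}\mu$ is handled as you do, but for the remaining piece $\localU_{\level-1}\mu-\injectionOp_\level\mu$ the paper invokes a Bramble--Hilbert/scaling argument on a reference patch for the operator $v\mapsto \localU_{\level-1}\pi v-\injectionOp_\level\pi v$ (which vanishes on piecewise linears), and then feeds in $v=\liftingOp_{\level-1}\mu$ together with the lifting bound \eqref{EQ:lift_bound}. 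Your route is more elementary in that it avoids the lifting operator $\liftingOp_{\level-1}$ and the reference-element compactness argument altogether, reusing only the averaging-interpolation lemmas already in play; the paper's route, on the other hand, gives a direct $|\cdot|_{H^1}$ bound for the offending term without tracking $h$-powers through a chain of inequalities, and naturally prepares the lifting machinery it reuses later in Section~\ref{SEC:analysis}.
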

To prove this lemma, we use the lifting operator $\liftingOp_\level\colon \skeletalSpace_\level \to V_\textup{disc}$ (with $V_\textup{disc} \subset C(\Omega) \cap L^2(\Omega)$ being a suitable discrete space). This operator is inspired from \cite{TanPhD}. It is also used in \cite[Lem.\ A.3]{GiraultR1986} for the two dimensional case and in \cite[Def.\ 5.46]{Monk2003} for three dimensional settings:
\begin{subequations}
\label{eq:define-s}
\begin{align}
\label{eq:define-s-a}
 (\liftingOp_\level \mu , v)_\elem &= (\localU_\level \mu, v)_\elem && \forall v \in \polynomials_p(\elem), \, \forall \elem \in \mesh_\level,\\
 \label{eq:define-s-face}
 \langle \liftingOp_\level \mu, \eta \rangle_\face &= \langle \mu, \eta \rangle_\face && \forall \eta \in \polynomials_{p+1}(\face), \, \forall \face \in \faceSet_\level,\\
 \liftingOp_\level \mu(\vec a) &= \avg{ \mu }_{\vec a} && \forall \vec a \text{ is a vertex in } \mesh_\level, \, \vec a \not\in \partial \Omega, \label{eq:define-s-internal-vertex}\\
 \liftingOp_\level \mu(\vec a) &= 0 && \forall \vec a \text{ is a vertex in } \mesh_\level, \, \vec a \in \partial \Omega
\end{align}
in two dimensions. In three spatial dimensions, we add the constraints
\begin{gather}\label{eq:define-s-edge}
    \langle \liftingOp_\level \mu, \eta \rangle_\Gamma =  \langle \avg{\mu}_\Gamma, \eta \rangle_\Gamma 
    \qquad \forall \eta \in \polynomials_{p+2}(\Gamma),
\end{gather}
\end{subequations}
Here, $\avg{m}_\Gamma$ is the average taken over all cells adjacent to $\Gamma$.
This construction extends to dimensions higher than three. For a precise definition of $V_\textup{disc}$ and a proof of the fact that $S_\level$ is well-defined, please refer to \cite[Lem.\ A.3]{GiraultR1986} and \cite[Def.\ 5.46]{Monk2003}.
Moreover, we have
\begin{lemma}[Properties of $\liftingOp_\level \lambda$]
 Under assumptions \eqref{EQ:LS1}--\eqref{EQ:LS4}, we have
 \begin{align}
  \| \liftingOp_\level \mu \|_0 ~ \cong ~ & \| \mu \|_\level && \forall \lambda \in \skeletalSpace_\level && \text{(norm equiv.)}\label{EQ:lift_equiv}\\
  \liftingOp_\level \gamma_\level w ~=~ & w && \forall w \in \linElementSpace_\level && \text{(lifting identity)}\label{EQ:lift_iden}\\
  |\liftingOp_\level \mu|_{1} ~\lesssim~ & \|  \mu \|_{a_\level} && \forall \lambda \in \skeletalSpace_\level &&  \text{(lifting bound)}\label{EQ:lift_bound}
 \end{align}
\end{lemma}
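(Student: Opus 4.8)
The plan is to establish the three properties of the lifting operator $\liftingOp_\level$ in the order they are stated, since the norm equivalence is the workhorse used in the other two.

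For the \emph{norm equivalence} \eqref{EQ:lift_equiv}, I would argue by a standard scaling/equivalence-of-norms-on-finite-dimensional-spaces argument on each element. On a reference simplex, the degrees of freedom defining $\liftingOp_\level\mu$ — the bulk $L^2$ moments against $\polynomials_p$, the face moments against $\polynomials_{p+1}$ (and edge moments against $\polynomials_{p+2}$ in 3D), plus the vertex values — are unisolvent for $V_\textup{disc}$ (this is exactly the content of the cited constructions in \cite{GiraultR1986,Monk2003}), so on the reference element the map from these data to the $L^2$ norm of $\liftingOp_\level\mu$ is a norm on a finite-dimensional space. Transporting back by affine maps and using the quasi-uniformity of $(\mesh_\level)$, the bulk moments against $\polynomials_p$ contribute $\simeq \|\localU_\level\mu\|_\elem^2$ while the face/edge/vertex data contribute $\simeq \|\mu\|_\level^2$ (after the usual scaling by $|\elem|/|\partial\elem| \simeq h_\elem$). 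The point is then that $\|\localU_\level\mu\|_\Omega \lesssim \|\mu\|_\level$ by the second bound in \eqref{EQ:LS2}, and conversely $\|\mu\|_\level \lesssim \|\localU_\level\mu - \mu\|_\level + \|\localU_\level\mu\|_\Omega$; the first term here can be absorbed using \eqref{EQ:LS1} and \eqref{EQ:LS6} (i.e. $\|\localU_\level\mu-\mu\|_\level \lesssim h_\level\|\mu\|_{a_\level} \lesssim \|\mu\|_\level$). So the bulk and skeleton data are each controlled by $\|\mu\|_\level$ and, up to the usual scaling, together control it. This yields $\|\liftingOp_\level\mu\|_0 \simeq \|\mu\|_\level$.

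For the \emph{lifting identity} \eqref{EQ:lift_iden}, let $w\in\linElementSpace_\level$ and set $\mu = \gamma_\level w$. By \eqref{EQ:LS4}, $\localU_\level\gamma_\level w = w$, so $w$ already satisfies the bulk defining equations \eqref{eq:define-s-a}. Since $w$ is continuous and piecewise affine, its vertex values are single-valued and equal $\avg{\mu}_{\vec a}$, and it vanishes on $\partial\Omega$, so the vertex conditions hold; likewise its trace on each face is the single-valued polynomial $w_{|\face}$, and the trace of $w$ on an edge is single-valued, so the face and edge moment conditions are automatically matched by $w$ itself. By uniqueness (well-posedness of \eqref{eq:define-s}), $\liftingOp_\level\gamma_\level w = w$. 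One small point to check is that $\linElementSpace_\level \subset V_\textup{disc}$ so that $w$ is an admissible value of $\liftingOp_\level$; this is part of the design of $V_\textup{disc}$ in the cited references.

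For the \emph{lifting bound} \eqref{EQ:lift_bound}, the idea is a broken-vs-global comparison. First, element-wise inverse estimates and the definitions \eqref{eq:define-s} give $|\liftingOp_\level\mu|_{1,\elem} \lesssim h_\elem^{-1}\|\liftingOp_\level\mu\|_\elem \lesssim h_\elem^{-1}(\|\localU_\level\mu\|_\elem + \text{skeleton data on }\partial\elem)$, which by itself only yields the too-weak bound $h_\level^{-1}\|\mu\|_\level$. To get the sharp bound one compares $\liftingOp_\level\mu$ with the averaging interpolant $\averagingOp_\level\localU_\level\mu\in\linElementSpace_\level$: write $|\liftingOp_\level\mu|_1 \le |\averagingOp_\level\localU_\level\mu|_1 + |\liftingOp_\level\mu - \averagingOp_\level\localU_\level\mu|_1$. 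The first term is $\lesssim \|\mu\|_{a_\level}$ by \eqref{EQ:avg_stability}. For the second, note $\liftingOp_\level\mu - \averagingOp_\level\localU_\level\mu$ is, element by element, a polynomial whose defining data are small: its vertex values vanish (both have vertex values $\avg{\cdot}_{\vec a}$; here I'd use $\avg{\localU_\level\mu}_{\vec a} = \avg{\mu}_{\vec a}$, which should follow from \eqref{eq:define-s-face} applied with $\eta\equiv1$ on the faces meeting $\vec a$, or be added as needed), its bulk moments against $\polynomials_p$ equal those of $\localU_\level\mu - \averagingOp_\level\localU_\level\mu$, and its face moments equal those of $\mu - \gamma_\level\averagingOp_\level\localU_\level\mu$. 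Hence by the scaling argument $\|\liftingOp_\level\mu - \averagingOp_\level\localU_\level\mu\|_0 \lesssim \|\localU_\level\mu - \averagingOp_\level\localU_\level\mu\|_0 + \|\mu - \gamma_\level\averagingOp_\level\localU_\level\mu\|_\level \lesssim h_\level\|\mu\|_{a_\level}$ by \eqref{EQ:avg_error} and Lemma~\ref{LEM:diff_bound_by_q}, and an inverse inequality converts this into $|\liftingOp_\level\mu - \averagingOp_\level\localU_\level\mu|_1 \lesssim \|\mu\|_{a_\level}$. Combining gives \eqref{EQ:lift_bound}.

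The main obstacle is the last bound: the naive element-wise estimate loses a power of $h_\level$, and recovering it requires the comparison with $\averagingOp_\level\localU_\level\mu$ together with a careful bookkeeping of which degrees of freedom of $\liftingOp_\level\mu - \averagingOp_\level\localU_\level\mu$ are actually small — in particular verifying that the vertex (and edge, in 3D) data cancel, which depends on how $\avg{\cdot}_{\vec a}$ is computed from $\localU_\level\mu$ versus from $\mu$. Everything else is routine scaling, unisolvence, and the already-established Lemmas~\ref{LEM:avg_bound_by_localQ} and~\ref{LEM:diff_bound_by_q}.
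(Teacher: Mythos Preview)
The paper does not give its own proof here: it simply cites \cite[Lem.\ 5.5]{LuRK22a} and remarks that the only change is the modified form of \eqref{EQ:LS1}. Your sketch is therefore more detailed than what the paper provides, and the overall structure you outline---scaling/unisolvence for \eqref{EQ:lift_equiv}, direct verification of the defining conditions for \eqref{EQ:lift_iden}, and comparison with $\averagingOp_\level\localU_\level\mu$ followed by an inverse inequality for \eqref{EQ:lift_bound}---is the standard route and presumably matches the cited reference.

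One genuine slip: in your argument for \eqref{EQ:lift_equiv} you invoke \eqref{EQ:LS6}, but the lemma is stated under \eqref{EQ:LS1}--\eqref{EQ:LS4} only. Fortunately this detour is unnecessary. Once scaling gives, schematically, $\|\liftingOp_\level\mu\|_0^2 \simeq \|\localU_\level\mu\|_\Omega^2 + \|\mu\|_\level^2$, the bound $\|\localU_\level\mu\|_\Omega \lesssim \|\mu\|_\level$ from \eqref{EQ:LS2} alone yields the upper bound, and the lower bound $\|\mu\|_\level \lesssim \|\liftingOp_\level\mu\|_0$ follows even more directly from \eqref{eq:define-s-face}: since $\mu_{|\face}\in\polynomials_p(\face)\subset\polynomials_{p+1}(\face)$, the face condition implies $\|\mu\|_\face \le \|\liftingOp_\level\mu\|_\face$, and a discrete trace inequality finishes. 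Your ``conversely'' step through $\|\localU_\level\mu-\mu\|_\level$ is not needed.

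Your identification of the main obstacle in \eqref{EQ:lift_bound} is accurate, but note that the vertex values of $\liftingOp_\level\mu$ and $\averagingOp_\level\localU_\level\mu$ do \emph{not} cancel in general: $\avg{\mu}_{\vec a}$ averages the skeletal function over faces meeting $\vec a$, while $\avg{\localU_\level\mu}_{\vec a}$ averages the bulk reconstruction over cells. What actually works is to bound the difference $|\avg{\mu}_{\vec a} - \avg{\localU_\level\mu}_{\vec a}|$ via a pointwise inverse inequality by $h_\elem^{-(d-1)/2}\|\localU_\level\mu-\mu\|_{\partial\elem}$ on the surrounding patch, which after scaling is absorbed through \eqref{EQ:LS1} just like the face-moment contribution. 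With that fix, your argument goes through.
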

\begin{proof}
This is \cite[Lem.\ 5.5]{LuRK22a} with our modified version of \eqref{EQ:LS1} as compared to their (LS1).
\end{proof}
\begin{proof}[Proof of Lemma \ref{TH:conv_result}]
 Inequalities \eqref{EQ:Q_mehses} and \eqref{EQ:second_ineq} can be obtained as in \cite[Lem.\ 5.1]{LuRK22a}. Let us obtain inequality \eqref{EQ:i_stable}. To this end, we observe that
 \begin{equation*}
  \| \injectionOp_\level \mu \|_{a_{\level}}^2 \overset{\eqref{EQ:LS9}}{\lesssim} \| \localQ_\level \injectionOp_\level \mu \|^2_\Omega + \sum_{\elem \in \mesh_\level} \tfrac1{h_\elem} \| \localU_\level \injectionOp_\level \mu - \localU_{\level - 1} \mu \|^2_{\partial \elem} + \sum_{\elem \in \mesh_\level} \tfrac1{h_\elem} \| \localU_{\level - 1} \mu - \injectionOp_\level \mu \|^2_{\partial \elem},
 \end{equation*}
 where \eqref{EQ:Q_mehses} allows us to bound $\| \localQ_\level \injectionOp_\level \mu \|^2_\Omega$, \eqref{EQ:second_ineq} allows us to bound
 \begin{equation*}
  \sum_{\elem \in \mesh_\level} h^{-1}_\elem \| \localU_\level \injectionOp_\level \mu - \localU_{\level - 1} \mu \|^2_{\partial \elem} \lesssim h^{-2}_\level \| \localU_\level \injectionOp_\level \mu - \localU_{\level - 1} \mu \|^2_{\Omega}.
 \end{equation*}
 Last, we consider the reference element $\hat \elem$, which is the center of a reference element patch $\omega_{\hat \elem}$ with skeleton $\omega_{\faceSet}$, and the face-wise $L^2$ orthogonal projection $\pi \colon L^2(\omega_{\faceSet}) \to \skeletalSpace_{\omega_{\faceSet}}$. We observe that
 \begin{equation*}
  \mathcal G\colon H^1(\omega_{\hat \elem}) \ni v \mapsto \localU_{\level - 1} \pi v - \injectionOp_\level \pi v \in \skeletalSpace_\level|_{\partial \hat \elem} \subset L^2(\partial \hat \elem)
 \end{equation*}
 is a continuous operator, which vanishes if $v$ is an overall continuous, element-wise linear polynomial. Thus, the standard scaling argument allows us to deduce that
 \begin{equation*}
  \sum_{\elem \in \mesh_\level} \tfrac1{h_\elem} \| \localU_{\level - 1} \pi v - \injectionOp_\level \pi v \|^2_{\partial \elem} \lesssim | v |^2_{H^1(\Omega)}.
 \end{equation*}
 Finally, setting $v = \liftingOp_{\level-1} \mu$ with the lifting operator $\liftingOp_{\level -1}$ as defined in \eqref{eq:define-s} yields the result (in conjunction with \eqref{EQ:lift_bound}).
 
 Relation \eqref{EQ:ap_stable} results from
 \begin{multline*}
   \| \projectionOp_{\level-1} \mu \|^2_{a_{\level-1}}
   = a_{\level-1}(\projectionOp_{\level-1} \mu, \projectionOp_{\level-1} \mu)
     \overset{\eqref{EQ:projection_definition}}= a_{\level}(\mu, \injectionOp_{\level}\projectionOp_{\level-1} \mu)
   \\
   \le \| \mu \|_{a_\level} \| \injectionOp_\level \projectionOp_{\level-1} \mu \|_{a_\level}
   \overset{\eqref{EQ:i_stable}}\lesssim \| \mu \|_{a_\level} \| \projectionOp_{\level-1} \mu \|_{a_{\level-1}},
 \end{multline*}
 where we have used the Bunyakovsky--Cauchy--Schwarz inequality to pass to the second line.
\end{proof}
\begin{lemma}
 Under the assumptions of Lemma \ref{TH:conv_result}, \eqref{EQ:precond2} holds.
\end{lemma}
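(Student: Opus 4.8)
The plan is to bound the energy norm of $\mu - \injectionOp_\level \projectionOp_{\level-1}\mu$ by decomposing it through an intermediate linear finite element function and exploiting the quasi-orthogonality already established in Lemma~\ref{LEM:quasi_orth}. Concretely, I would write
\[
 \mu - \injectionOp_\level \projectionOp_{\level-1}\mu
 = \bigl(\mu - \injectionOp_\level \gamma_{\level-1} \averagingOp_{\level-1}\localU_{\level-1}\projectionOp_{\level-1}\mu\bigr)
 + \injectionOp_\level\bigl(\gamma_{\level-1}\averagingOp_{\level-1}\localU_{\level-1}\projectionOp_{\level-1}\mu - \projectionOp_{\level-1}\mu\bigr),
\]
where the point of inserting $w \coloneq \averagingOp_{\level-1}\localU_{\level-1}\projectionOp_{\level-1}\mu \in \linElementSpace_{\level-1}$ is that $\injectionOp_\level\gamma_{\level-1}w = \gamma_\level w$ by \eqref{EQ:IA2}. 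The second summand is controlled by \eqref{EQ:IA1} (boundedness of $\injectionOp_\level$ in the $\|\cdot\|_\level$ norm), then the inverse estimate from \eqref{EQ:LS6} to pass to the $a_{\level-1}$ norm, then Lemma~\ref{LEM:diff_bound_by_q} applied at level $\level-1$ to bound $\|\projectionOp_{\level-1}\mu - \gamma_{\level-1}\averagingOp_{\level-1}\localU_{\level-1}\projectionOp_{\level-1}\mu\|_{\level-1} \lesssim h_{\level-1}\|\projectionOp_{\level-1}\mu\|_{a_{\level-1}}$, and finally \eqref{EQ:ap_stable} from Lemma~\ref{TH:conv_result} to replace $\|\projectionOp_{\level-1}\mu\|_{a_{\level-1}}$ by $\|\mu\|_{a_\level}$; the factor $h_{\level-1}$ cancels the $h_{\level-1}^{-1}$ from the inverse estimate.

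For the first summand, since $\injectionOp_\level\gamma_{\level-1}w = \gamma_\level w$, one has $\mu - \injectionOp_\level\gamma_{\level-1}w = (\mu - \gamma_\level\averagingOp_\level\localU_\level\mu) + (\gamma_\level\averagingOp_\level\localU_\level\mu - \gamma_\level w)$, so it suffices to bound $\|\mu - \gamma_\level\averagingOp_\level\localU_\level\mu\|_{a_\level}$ and $\|\gamma_\level(\averagingOp_\level\localU_\level\mu - w)\|_{a_\level}$. Actually the cleaner route, following \cite[Sec.~5]{LuRK22a}, is to bound $\|\mu - \injectionOp_\level\gamma_{\level-1}w\|_{a_\level}^2$ directly via \eqref{EQ:LS9}: the $\localQ$-part becomes $\|\localQ_\level\mu - \localQ_\level\gamma_\level w\|_\Omega = \|\localQ_\level\mu + \nabla w\|_\Omega$ using \eqref{EQ:LS4}, which is estimated by adding and subtracting $\nabla\averagingOp_\level\localU_\level\mu$, using \eqref{EQ:LS3}, Lemma~\ref{LEM:avg_bound_by_localQ}, and a comparison of the two averaged interpolants across levels; the boundary part $\sum_\elem h_\elem^{-1}\|\localU_\level\mu - \injectionOp_\level\gamma_{\level-1}w\|_{\partial\elem}^2$ is handled by triangle inequality through $\averagingOp_\level\localU_\level\mu$ and $w$, a trace/inverse estimate on each element, and Lemma~\ref{LEM:avg_bound_by_localQ} together with \eqref{EQ:LS1}. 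Throughout, one uses the stability of $\projectionOp_{\level-1}$ and of $\averagingOp_{\level-1}\localU_{\level-1}$ so that $|w|_1 \lesssim \|\projectionOp_{\level-1}\mu\|_{a_{\level-1}} \lesssim \|\mu\|_{a_\level}$, and then $\|w - \averagingOp_\level\localU_\level\mu\|$-type terms are compared via the lifting operator $\liftingOp$ and its properties \eqref{EQ:lift_equiv}--\eqref{EQ:lift_bound}.

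Assembling, every piece is bounded by a constant times $\|\mu\|_{a_\level}$, which is \eqref{EQ:precond2}. I expect the main obstacle to be the cross-level comparison of the two averaged linear interpolants $\averagingOp_\level\localU_\level\mu$ and $w = \averagingOp_{\level-1}\localU_{\level-1}\projectionOp_{\level-1}\mu$ — neither lives naturally on the other's space in a way that a single estimate covers — and making the bookkeeping of the $h_\level$ versus $h_{\level-1}$ powers come out right, for which \eqref{EQ:cref} (bounded refinement ratio) is essential. Since this is exactly the structure of the argument in \cite[proof of (A2), Sec.~5]{LuRK22a} with the only change being our weaker form of \eqref{EQ:LS1} (which merely weakens some intermediate right-hand sides, as already noted in Lemmas~\ref{LEM:avg_bound_by_localQ} and \ref{LEM:diff_bound_by_q} without affecting the final estimate), I anticipate the proof reduces to citing that reference once the decomposition above is in place, modulo the reverification that none of the borrowed steps used an HDG-specific property absent from \eqref{EQ:LS1}--\eqref{EQ:LS9}.
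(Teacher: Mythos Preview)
Your approach is far more elaborate than necessary and misses a three-line argument that the paper uses. The paper simply expands the square:
\[
 \|\mu - \injectionOp_\level \projectionOp_{\level-1}\mu\|_{a_\level}^2
 = \|\mu\|_{a_\level}^2 - 2\,a_\level(\mu,\injectionOp_\level \projectionOp_{\level-1}\mu) + \|\injectionOp_\level \projectionOp_{\level-1}\mu\|_{a_\level}^2,
\]
observes that by the very definition \eqref{EQ:projection_definition} of $\projectionOp_{\level-1}$ the cross term equals $2\,a_{\level-1}(\projectionOp_{\level-1}\mu,\projectionOp_{\level-1}\mu)\ge 0$, drops it, and then bounds the last term by \eqref{EQ:i_stable} followed by \eqref{EQ:ap_stable}. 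That is the entire proof; the hard work was already absorbed into Lemma~\ref{TH:conv_result}.

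Your route through the intermediate $w=\averagingOp_{\level-1}\localU_{\level-1}\projectionOp_{\level-1}\mu$ may be salvageable, but as you yourself note, the ``first summand'' part is incomplete: bounding $\|\mu - \gamma_\level w\|_{a_\level}$ via \eqref{EQ:LS9} requires controlling cross-level quantities like $\|\averagingOp_\level\localU_\level\mu - \averagingOp_{\level-1}\localU_{\level-1}\projectionOp_{\level-1}\mu\|$ in suitable norms, and you only gesture at ``a comparison of the two averaged interpolants'' without an actual estimate. Nothing in Lemmas~\ref{LEM:avg_bound_by_localQ}--\ref{LEM:diff_bound_by_q} directly gives this, and invoking the lifting $\liftingOp$ does not obviously close the gap either. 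So your proposal has a genuine hole in addition to being circuitous. The key observation you overlooked is that the sign of the cross term, via the Ritz-type identity $a_\level(\mu,\injectionOp_\level\rho)=a_{\level-1}(\projectionOp_{\level-1}\mu,\rho)$, makes the whole decomposition through $w$ unnecessary.
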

\begin{proof} Using first the linearity and symmetry of $a_{\level}$ and then the definition \eqref{EQ:projection_definition} of the Ritz quasi-projector $\projectionOp_{\level-1}$, we obtain
  \[
 \begin{aligned}
   &a_\level ( \mu -  \injectionOp_{\level} \projectionOp_{\level-1} \mu, \mu - \injectionOp_\level \projectionOp_{\level-1} \mu )
   \\
   &\quad=
   a_\level ( \mu, \mu ) - 2 a_\level ( \mu, \injectionOp_\level \projectionOp_{\level-1} \mu )
   + a_\level ( \injectionOp_\level \projectionOp_{\level-1} \mu, \injectionOp_\level \projectionOp_{\level-1} \mu ) \notag\\
   &\quad
   \le \| \mu \|_{a_\level}^2 \underbrace{ - 2 a_{\level - 1} ( \projectionOp_{\level-1} \mu,  \projectionOp_{\level-1} \mu ) }_{ \le 0 }
   + \| \injectionOp_\level \projectionOp_{\level-1} \mu \|_{a_\level}^2.
 \end{aligned}
 \]
   To conclude, we write for the rightmost term
   \[
   \| \injectionOp_\level \projectionOp_{\level-1} \mu \|_{a_{\level}}^2
   \overset{\eqref{EQ:i_stable}}\lesssim \| \projectionOp_{\level-1} \mu \|_{a_{\level-1}}^2
   \overset{\eqref{EQ:ap_stable}}\lesssim \| \mu \|^2_{a_\level}.\qedhere
   \]
\end{proof}
%
\subsection{Proofs of \eqref{EQ:precond1a} and \eqref{EQ:precond1}}
%
A classical approach for multigrid proofs consists is considering $A_\level \mu$ (cf. \eqref{eq:Al}) as a function in $L^2(\Omega)$ and using it as right-hand side in an auxiliary problem. However, this is not possible for discontinuous skeletal methods, since $A_\level \mu \in \skeletalSpace_\level$ is defined only on the mesh skeleton and not on the whole domain $\Omega$. To this end, we use the lifting operator $\liftingOp_\level$. That is, we define the auxiliary right-hand side $f_\mu \in V_\textup{disc}$ as the unique solution of
\begin{equation}\label{EQ:scp_def}
   (f_\mu, \liftingOp_\level \eta)
   = \langle A_\level \mu, \eta \rangle_{\level}
    = a_\level (\mu, \eta)
   \qquad \forall \eta \in \skeletalSpace_\level,
\end{equation}
and its perturbed skeletal approximation $\tilde \mu \in \skeletalSpace_\level$ with
\begin{equation}\label{EQ:tilde_lambda}
 a_\level (\tilde \mu, \eta) = (f_\mu, \localU_\level \eta) \qquad \forall \eta \in \skeletalSpace_\level.
\end{equation}
Notably, the solutions of this discrete, perturbed problem approximate the solution of the continuous problem
\begin{equation}\label{EQ:Pois_app}
   (\nabla\tilde u,\nabla v) = (f_\mu,v)
 \qquad \forall v\in H^1_0(\Omega).
\end{equation}
\begin{lemma}\label{TH:extract_h_basis}
  Let \eqref{EQ:LS1}--\eqref{EQ:LS4} hold.
  Then we have, for all $\mu \in \skeletalSpace_\level$,
 \begin{equation}\label{EQ:bound_flambda}
  \| \mu - \tilde \mu \|_{a_\level} \lesssim h_\level \| f_\mu \|_\Omega, \qquad \text{ and } \qquad
  \| f_\mu \|_\Omega \lesssim \| A_\level \mu \|_\level.
 \end{equation}
\end{lemma}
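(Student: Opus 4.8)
The plan is to prove the two estimates in \eqref{EQ:bound_flambda} in reverse order, since the second one is essentially a duality/definition computation and the first one relies on it. For the bound $\| f_\mu \|_\Omega \lesssim \| A_\level \mu \|_\level$, I would start from the defining relation \eqref{EQ:scp_def}, namely $(f_\mu, \liftingOp_\level \eta) = \langle A_\level \mu, \eta \rangle_\level$ for all $\eta \in \skeletalSpace_\level$, and test with a suitable $\eta$. The natural choice is to pick $\eta$ so that $\liftingOp_\level \eta$ recovers (a fixed multiple of) $f_\mu$, or alternatively to use the norm equivalence \eqref{EQ:lift_equiv} to pass between $\| \liftingOp_\level \eta \|_0$ and $\| \eta \|_\level$. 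Concretely: $\| f_\mu \|_\Omega = \sup_{0 \neq v \in V_\textup{disc}} (f_\mu, v)/\| v \|_0$, and since $\liftingOp_\level$ is (up to the norm equivalence constant) a bijection onto its image, writing $v = \liftingOp_\level \eta$ gives $(f_\mu, \liftingOp_\level \eta) = \langle A_\level \mu, \eta \rangle_\level \le \| A_\level \mu \|_\level \| \eta \|_\level \lesssim \| A_\level \mu \|_\level \| \liftingOp_\level \eta \|_0$ by Cauchy--Schwarz for $\langle \cdot,\cdot\rangle_\level$ and then \eqref{EQ:lift_equiv}. Dividing by $\| v \|_0 = \| \liftingOp_\level \eta\|_0$ and taking the supremum yields the claim; the only subtlety is that $V_\textup{disc}$ may be strictly larger than $\liftingOp_\level(\skeletalSpace_\level)$, so one either restricts the auxiliary problem \eqref{EQ:scp_def} to that image (which is what the "unique solution" wording presumably intends) or argues that the orthogonal complement does not matter.

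For the first estimate $\| \mu - \tilde\mu \|_{a_\level} \lesssim h_\level \| f_\mu \|_\Omega$, I would subtract the defining identities \eqref{EQ:scp_def} (with $\liftingOp_\level$) and \eqref{EQ:tilde_lambda} (with $\localU_\level$): for all $\eta \in \skeletalSpace_\level$,
\[
  a_\level(\mu - \tilde\mu, \eta) = (f_\mu, \liftingOp_\level \eta) - (f_\mu, \localU_\level \eta) = (f_\mu, \liftingOp_\level \eta - \localU_\level \eta).
\]
Now test with $\eta = \mu - \tilde\mu$ to get $\| \mu - \tilde\mu\|_{a_\level}^2 = (f_\mu, \liftingOp_\level(\mu-\tilde\mu) - \localU_\level(\mu-\tilde\mu)) \le \| f_\mu\|_\Omega \, \| \liftingOp_\level(\mu-\tilde\mu) - \localU_\level(\mu-\tilde\mu)\|_\Omega$. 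So everything reduces to the consistency-type bound
\[
  \| \liftingOp_\level \eta - \localU_\level \eta \|_\Omega \lesssim h_\level \| \eta \|_{a_\level} \qquad \forall \eta \in \skeletalSpace_\level,
\]
applied to $\eta = \mu - \tilde\mu$, after which one divides through by $\| \mu - \tilde\mu\|_{a_\level}$. To establish this bound I would insert the averaged interpolant $\averagingOp_\level \localU_\level \eta \in \linElementSpace_\level$ and use the triangle inequality: $\| \liftingOp_\level \eta - \localU_\level \eta\|_\Omega \le \| \liftingOp_\level \eta - \averagingOp_\level \localU_\level\eta\|_\Omega + \| \averagingOp_\level \localU_\level\eta - \localU_\level\eta\|_\Omega$. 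The second term is exactly \eqref{EQ:avg_error} from Lemma~\ref{LEM:avg_bound_by_localQ}, which is $\lesssim h_\level \| \eta\|_{a_\level}$. For the first term one notes that $\liftingOp_\level \gamma_\level w = w$ for $w \in \linElementSpace_\level$ by \eqref{EQ:lift_iden}, so setting $w = \averagingOp_\level \localU_\level \eta$ we can write $\liftingOp_\level \eta - w = \liftingOp_\level(\eta - \gamma_\level w)$, and then a scaling/stability argument for $\liftingOp_\level$ (the $L^2$-analogue of \eqref{EQ:lift_equiv}, giving $\| \liftingOp_\level \zeta \|_0 \lesssim \| \zeta\|_\level$) together with Lemma~\ref{LEM:diff_bound_by_q}, which bounds $\| \eta - \gamma_\level \averagingOp_\level\localU_\level\eta\|_\level \lesssim h_\level \| \eta\|_{a_\level}$, closes the estimate.

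I expect the main obstacle to be the careful handling of the lifting operator $\liftingOp_\level$ and the space $V_\textup{disc}$: making sure the auxiliary problem \eqref{EQ:scp_def} is genuinely well-posed on the right space, that the norm equivalence \eqref{EQ:lift_equiv} can be used in both directions with level-independent constants, and that the $L^2$-stability estimate $\| \liftingOp_\level \zeta\|_0 \lesssim \| \zeta\|_\level$ needed above is available (it follows from \eqref{EQ:lift_equiv}) and is compatible with the broken/continuous structure in two versus three dimensions. The remaining pieces — Cauchy--Schwarz, the triangle inequality with $\averagingOp_\level \localU_\level$, and the invocations of Lemmas~\ref{LEM:avg_bound_by_localQ} and~\ref{LEM:diff_bound_by_q} plus \eqref{EQ:lift_iden} — are routine, and this argument mirrors the corresponding proof in \cite{LuRK22a}, with the only change being the extra factor of $h_\level$ that propagates from our modified form of \eqref{EQ:LS1}.
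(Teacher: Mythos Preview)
Your proof is correct and is precisely the argument the paper has in mind: the paper's own ``proof'' simply cites \cite[Lem.~5.6]{LuRK22a} and remarks that the modified \eqref{EQ:LS1} does not change the result, and your sketch reproduces that argument (duality via \eqref{EQ:scp_def} and \eqref{EQ:lift_equiv} for the second bound; subtracting \eqref{EQ:scp_def} from \eqref{EQ:tilde_lambda}, testing with $\eta = \mu - \tilde\mu$, and controlling $\|\liftingOp_\level\eta - \localU_\level\eta\|_\Omega$ via Lemmas~\ref{LEM:avg_bound_by_localQ}, \ref{LEM:diff_bound_by_q} and \eqref{EQ:lift_iden} for the first). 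One small remark: there is no ``extra factor of $h_\level$'' introduced by the modified \eqref{EQ:LS1} here --- the intermediate bounds on the right-hand side change from $\|\localQ_\level\eta\|_\Omega$ to $\|\eta\|_{a_\level}$, but since this quantity cancels after testing with $\eta = \mu - \tilde\mu$, the final estimate is unchanged, exactly as the paper notes.
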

\begin{proof}
 This is part of \cite[Lem.\ 5.6]{LuRK22a}. The change in the meaning of \eqref{EQ:LS1} does not influence this result.
\end{proof}
\begin{lemma}[Reconstruction approximation]\label{LEM:reconstruction_approx}
  Let \eqref{EQ:IA1}, \eqref{EQ:IA2}, and \eqref{EQ:LS1}--\eqref{EQ:LS5} hold.
  Then, if the model problem has regularity \eqref{EQ:regularity}, for all $\mu \in \skeletalSpace_\level$ there exists an auxiliary function $\ureconstructed \in \linElementSpace_{\level - 1}$ such that
 \begin{equation}\label{EQ:reconst_approx}
  \| \localQ_\level \mu + \nabla \ureconstructed \|_\Omega + \| \localQ_{\level - 1} \projectionOp_{\level - 1} \mu + \nabla \ureconstructed \|_\Omega \lesssim h_\level^{\alpha} \| f_\mu \|_{\alpha - 1}.
 \end{equation}
\end{lemma}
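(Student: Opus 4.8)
The plan is to use the auxiliary continuous problem \eqref{EQ:Pois_app} as the bridge between the two discrete fluxes. First I would take $\tilde u \in H^1_0(\Omega)$ to be the solution of \eqref{EQ:Pois_app}, so that by the regularity assumption \eqref{EQ:regularity} we have $\|\tilde u\|_{1+\alpha} \lesssim \|f_\mu\|_{\alpha-1}$; note $-\nabla\tilde u$ is the natural continuous flux that both $\localQ_\level\mu$ and $\localQ_{\level-1}\projectionOp_{\level-1}\mu$ should approximate. The candidate reconstruction will be $\ureconstructed \coloneq \linearInterpolation_{\level-1}\tilde u$, i.e.\ the linear nodal (or averaged-nodal, via $\averagingOp_{\level-1}$) interpolant of $\tilde u$ into $\linElementSpace_{\level-1}$, for which standard interpolation estimates on the quasi-uniform mesh $\mesh_{\level-1}$ give $\|\nabla(\tilde u - \ureconstructed)\|_\Omega \lesssim h_{\level-1}^\alpha \|\tilde u\|_{1+\alpha} \lesssim h_\level^\alpha \|f_\mu\|_{\alpha-1}$, using \eqref{EQ:cref} to pass between $h_{\level-1}$ and $h_\level$.

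The core of the argument is then to bound $\|\localQ_\level\mu + \nabla\tilde u\|_\Omega$ and $\|\localQ_{\level-1}\projectionOp_{\level-1}\mu + \nabla\tilde u\|_\Omega$. For the first, observe that $\tilde\mu$ defined in \eqref{EQ:tilde_lambda} is precisely the hybrid approximation driven by the right-hand side $f_\mu$, whose exact solution is $\tilde u$; hence \eqref{EQ:LS5} (the convergence estimate \eqref{EQ:LS5}, applied at level $\level$ with datum $f_\mu$ and exact solution $\tilde u$) gives $\|\tilde\mu - \skeletalProj_\level \tilde u\|_{a_\level} \lesssim h_\level^\alpha\|\tilde u\|_{1+\alpha}$. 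Combining this with Lemma \ref{TH:extract_h_basis}, which controls $\|\mu - \tilde\mu\|_{a_\level} \lesssim h_\level\|f_\mu\|_\Omega$, and using that $\|\cdot\|_\Omega \lesssim \|\cdot\|_{\alpha-1}$ is not quite what we want — rather we will need $h_\level \|f_\mu\|_\Omega \lesssim h_\level^\alpha \|f_\mu\|_{\alpha - 1}$, which follows from an inverse estimate on $V_\textup{disc}$ together with $h_\level \le 1$ (or is absorbed since $f_\mu \in V_\textup{disc}$ is discrete). Then $\|\localQ_\level\mu - \localQ_\level\tilde\mu\|_\Omega \lesssim h_\level^{-1}\|\mu-\tilde\mu\|_\level \lesssim \|\mu - \tilde\mu\|_{a_\level}$ via \eqref{EQ:LS2} and \eqref{EQ:LS6}, and $\|\localQ_\level\tilde\mu + \nabla\tilde u\|_\Omega$ is controlled by \eqref{EQ:LS3} together with the fact that $\localU_\level\tilde\mu$ approximates $\tilde u$ and $\nabla_\level \localU_\level \tilde\mu$ is close to $\nabla\tilde u$ via \eqref{EQ:LS1}; the broken-gradient discrepancy is handled by Lemmas \ref{LEM:avg_bound_by_localQ} and \ref{LEM:diff_bound_by_q}.

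For the second term, I would exploit the quasi-orthogonality of Lemma \ref{LEM:quasi_orth}: since $\ureconstructed \in \linElementSpace_{\level-1}$, we have $(\localQ_\level\mu - \localQ_{\level-1}\projectionOp_{\level-1}\mu, \nabla\ureconstructed)_\Omega = 0$, so $\localQ_{\level-1}\projectionOp_{\level-1}\mu + \nabla\ureconstructed$ and $\localQ_\level\mu + \nabla\ureconstructed$ differ by a field $\localQ_{\level-1}\projectionOp_{\level-1}\mu - \localQ_\level\mu$ that is $L^2$-orthogonal to $\nabla\linElementSpace_{\level-1}$; expanding $\|\localQ_{\level-1}\projectionOp_{\level-1}\mu + \nabla\ureconstructed\|_\Omega^2$ and using the orthogonality plus \eqref{EQ:ap_stable} and Lemma \ref{TH:conv_result} to bound the energy of $\projectionOp_{\level-1}\mu$ reduces everything to the already-established bound on $\|\localQ_\level\mu + \nabla\ureconstructed\|_\Omega$ plus $\|\nabla(\tilde u - \ureconstructed)\|_\Omega$. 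The main obstacle I anticipate is the bookkeeping needed to convert the various $L^2(\Omega)$-norm estimates of $f_\mu$ into the weaker $H^{\alpha-1}$-norm appearing on the right-hand side: because $\alpha < 1$ the naive chain $\|f_\mu\|_{\alpha-1} \le \|f_\mu\|_\Omega$ goes the wrong way, so the $h_\level$-powers must be distributed carefully (using that one loses only $h_\level^\alpha$, not $h_\level$, from the reduced regularity) — this is exactly the point where the weak elliptic regularity, rather than full regularity, is genuinely used, and it mirrors the corresponding step in \cite[Lem.~5.7]{LuRK22a} which I would follow closely, substituting our modified \eqref{EQ:LS1} and the new assumption \eqref{EQ:LS9} where their HDG-specific structure was invoked.
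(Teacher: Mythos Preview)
The paper does not give its own proof; it simply cites \cite[Lem.~5.8]{LuRK22a}. Your overall architecture --- take $\tilde u$ from \eqref{EQ:Pois_app}, set $\ureconstructed$ to be a linear (quasi-)interpolant of $\tilde u$ in $\linElementSpace_{\level-1}$, and combine regularity, \eqref{EQ:LS5}, and Lemma~\ref{TH:extract_h_basis} --- is exactly the scheme of that reference, and your observation that an inverse estimate on the discrete space $V_\textup{disc}$ converts $h_\level\|f_\mu\|_\Omega$ into $h_\level^\alpha\|f_\mu\|_{\alpha-1}$ is correct and is the mechanism used.

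Two steps in your sketch, however, would not close as written. First, the route via \eqref{EQ:LS3} and \eqref{EQ:LS1} for $\|\localQ_\level\tilde\mu + \nabla\tilde u\|_\Omega$ only yields
\[
\|\localQ_\level\tilde\mu + \nabla_\level\localU_\level\tilde\mu\|_\Omega
\;\lesssim\; h_\level^{-1}\|\localU_\level\tilde\mu - \tilde\mu\|_\level
\;\lesssim\; \|\tilde\mu\|_{a_\level},
\]
which is $O(1)$, not $O(h_\level^\alpha)$; Lemmas~\ref{LEM:avg_bound_by_localQ}--\ref{LEM:diff_bound_by_q} likewise produce bounds with $\|\tilde\mu\|_{a_\level}$ on the right and do not help here. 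The fix is to bypass $\nabla\tilde u$ and use \eqref{EQ:LS4} directly: since $\ureconstructed\in\linElementSpace_{\level-1}\subset\linElementSpace_\level$ one has $\nabla\ureconstructed = -\localQ_\level\gamma_\level\ureconstructed$, hence $\|\localQ_\level\mu + \nabla\ureconstructed\|_\Omega \le \|\mu - \gamma_\level\ureconstructed\|_{a_\level}$, which is then split through $\tilde\mu$ and $\skeletalProj_\level\tilde u$ (this is precisely the mechanism used for $\mathfrak T_2$ in the proof of Theorem~\ref{TH:A1_proof}). Second, your quasi-orthogonality reduction for the coarse term does not obviously work: orthogonality of $\localQ_{\level-1}\projectionOp_{\level-1}\mu - \localQ_\level\mu$ to $\nabla\ureconstructed$ kills one cross term in the expansion of $\|\localQ_{\level-1}\projectionOp_{\level-1}\mu + \nabla\ureconstructed\|_\Omega^2$, but it leaves $\|\localQ_{\level-1}\projectionOp_{\level-1}\mu - \localQ_\level\mu\|_\Omega^2$ plus a cross term with $\localQ_\level\mu$, neither of which is $O(h_\level^{2\alpha})$. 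The argument in the reference treats the coarse term symmetrically, by repeating the fine-level argument on level $\level-1$ with a coarse perturbed approximation to $\tilde u$ and invoking \eqref{EQ:cref} to pass from $h_{\level-1}^\alpha$ to $h_\level^\alpha$.
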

\begin{proof}
 This is \cite[Lem.\ 5.8]{LuRK22a}.
\end{proof}
\begin{lemma}\label{LEM:approxA}
Let \eqref{EQ:LS1}--\eqref{EQ:LS4}, \eqref{EQ:LS6}, and \eqref{EQ:LS7} hold. Then,
\begin{gather*}
    \|f_\mu\|_{-1} \lesssim \| \mu \|_{a_\level}.
\end{gather*}
\end{lemma}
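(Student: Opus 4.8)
The goal is to bound the $H^{-1}$-norm of the auxiliary right-hand side $f_\mu$ by the energy norm $\|\mu\|_{a_\level}$. By the definition \eqref{EQ:scp_def} of $f_\mu$ via the lifting operator, the natural strategy is to test $f_\mu$ against an arbitrary $v \in H^1_0(\Omega)$ with $\|v\|_1 = 1$, write $(f_\mu, v) = (f_\mu, v - w) + (f_\mu, w)$ for a well-chosen $w \in \linElementSpace_\level$ (the piecewise-linear interpolant/quasi-interpolant of $v$), and handle the two terms separately. For the first term I expect $(f_\mu, v-w)$ to be controlled by $\|f_\mu\|_\Omega \|v-w\|_\Omega$, but this forces a factor of $h_\level$; the cleaner route is to keep everything relative to $\liftingOp_\level$: since $w = \liftingOp_\level \gamma_\level w$ by the lifting identity \eqref{EQ:lift_iden}, we have $(f_\mu, w) = (f_\mu, \liftingOp_\level \gamma_\level w) = a_\level(\mu, \gamma_\level w)$ by \eqref{EQ:scp_def}. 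So the first step is to reduce $(f_\mu, v)$, up to terms we can absorb, to $a_\level(\mu, \gamma_\level w)$.

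The second step is to bound $a_\level(\mu, \gamma_\level w)$. Using \eqref{EQ:bilinear} (or \eqref{EQ:LS9} together with \eqref{EQ:LS7}), and the consistency assumption \eqref{EQ:LS4} which gives $\localQ_\level \gamma_\level w = -\nabla w$ and $\localU_\level \gamma_\level w = w$, the stabilization contribution $s_\level(\mu, \gamma_\level w)$ vanishes by \eqref{EQ:LS7}, leaving $a_\level(\mu, \gamma_\level w) = (\localQ_\level \mu, \localQ_\level \gamma_\level w)_\Omega = -(\localQ_\level \mu, \nabla w)_\Omega \le \|\localQ_\level \mu\|_\Omega \|\nabla w\|_\Omega$. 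Then $\|\localQ_\level \mu\|_\Omega \lesssim \|\mu\|_{a_\level}$ directly from \eqref{EQ:bilinear}, and $\|\nabla w\|_\Omega \lesssim \|v\|_1 = 1$ by stability of the piecewise-linear quasi-interpolant in $H^1$. The gap to close is the discrepancy between $(f_\mu, v)$ and $(f_\mu, \liftingOp_\level \gamma_\level w)$: I would write $v = (v - w) + w$ and note $(f_\mu, v - w)$ must also be bounded by $\|\mu\|_{a_\level}$. The trick here — which I expect is the main obstacle — is that $\|v - w\|_\Omega \lesssim h_\level |v|_1$ by interpolation, while $\|f_\mu\|_\Omega$ only satisfies $\|f_\mu\|_\Omega \lesssim \|A_\level \mu\|_\level \lesssim h_\level^{-1} \|\mu\|_{a_\level}$ (from \eqref{EQ:bound_flambda} and \eqref{EQ:LS6}); these two factors of $h_\level$ cancel, yielding $(f_\mu, v-w) \lesssim \|\mu\|_{a_\level} |v|_1 \lesssim \|\mu\|_{a_\level}$.

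Putting the pieces together:
\[
(f_\mu, v) = (f_\mu, v - w) + a_\level(\mu, \gamma_\level w) \lesssim \|\mu\|_{a_\level} |v|_1 + \|\localQ_\level \mu\|_\Omega \|\nabla w\|_\Omega \lesssim \|\mu\|_{a_\level},
\]
and taking the supremum over $v \in H^1_0(\Omega)$ with $\|v\|_1 \le 1$ gives $\|f_\mu\|_{-1} \lesssim \|\mu\|_{a_\level}$. The main obstacle, as flagged, is making sure the inverse-estimate factor $h_\level^{-1}$ picked up from $\|f_\mu\|_\Omega$ is exactly compensated by the approximation factor $h_\level$ from $\|v - w\|_\Omega$; this requires the quasi-uniformity assumption on the mesh so that all the $h_\elem$ are comparable to $h_\level$, and it is the one place where assumptions \eqref{EQ:LS6} (the inverse inequality) and \eqref{EQ:bound_flambda} are genuinely needed rather than just \eqref{EQ:LS4} and \eqref{EQ:LS7}. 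A secondary technical point is the choice of quasi-interpolant onto $\linElementSpace_\level$: one needs $H^1$-stability and the $L^2$ approximation estimate simultaneously, for which a Clément- or Scott–Zhang-type operator is appropriate, and one must check it respects the homogeneous Dirichlet boundary condition so that $w \in \linElementSpace_\level$ indeed holds.
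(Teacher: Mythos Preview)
Your proposal is correct and follows essentially the same approach as the paper: split $(f_\mu, v) = (f_\mu, v - w) + (f_\mu, \liftingOp_\level \gamma_\level w)$ with $w = \contLinProj_\level v$ a Scott--Zhang quasi-interpolant, use \eqref{EQ:lift_iden} and \eqref{EQ:scp_def} to turn the second piece into $a_\level(\mu, \gamma_\level w)$, bound it via \eqref{EQ:LS4}--\eqref{EQ:LS7} and $H^1$-stability of $\contLinProj_\level$, and absorb the first piece by the $h_\level$ cancellation coming from \eqref{EQ:bound_flambda} and \eqref{EQ:LS6}. The only cosmetic difference is that the paper bounds the second term by a Cauchy--Schwarz in the $a_\level$-inner product, writing $\| \gamma_\level w \|_{a_\level} = |w|_1$ directly, whereas you expand $a_\level(\mu,\gamma_\level w) = -(\localQ_\level\mu,\nabla w)_\Omega$ first; both routes are equivalent.
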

\begin{proof}
  By the definition of negative norms and properties of $\sup$,
 \begin{equation}\label{eq:approxA:basic}
  \|f_\mu\|_{-1} 
  \le  \sup_{\psi \in H^1_0(\Omega)} \frac{(f_\mu, \psi - \liftingOp_\level \traceOp \contLinProj_\level \psi)}{| \psi |_1} + \sup_{\psi \in H^1_0(\Omega)} \frac{(f_\mu, \liftingOp_\level \traceOp \contLinProj_\level \psi)}{| \psi |_1}.
 \end{equation}
 Here, $\contLinProj_\level$  is an (quasi-)interpolator onto $\linElementSpace_{\level}$ that satisfies
 \begin{align}
  | \contLinProj_{\level} v |_1 & \lesssim | v |_1 && \forall v \in H^1(\Omega), \label{EQ:H1_stab}\\
  \| v - \contLinProj_{\level} v \|_\Omega & \lesssim h_{\level}^{1-k+\alpha} |v|_{\alpha+1},  && \forall v \in H^{\alpha+1}(\Omega), \; k=0,1.\label{EQ:H1_approx}   
 \end{align}
 An example is given in \cite{SZ90}. We continue with bounding
 \begin{equation}\label{eq:approxA:1}
   \| \psi - \liftingOp_\level \traceOp \contLinProj_\level \psi \|_\Omega
   \overset{\eqref{EQ:lift_iden}}= \| \psi - \contLinProj_\level \psi \|_\Omega \overset{\eqref{EQ:H1_stab}}\lesssim h_\level | \psi |_1,
 \end{equation}
 and observe that
 \begin{equation*}
   (f_\mu, \liftingOp_\level \traceOp \contLinProj_\level \psi)_\Omega  
   \overset{\eqref{EQ:scp_def}}=
   a_\level (\mu, \gamma_\level \contLinProj_\level \psi),
 \end{equation*}
 which immediately yields, using a Cauchy--Schwarz inequality,
 \begin{equation}\label{eq:approxA:2}
   | (f_\mu, \liftingOp_\level \traceOp \contLinProj_\level \psi)_\Omega |
   \le \| \mu \|_{a_\level} \| \gamma_\level \contLinProj_\level \psi \|_{a_\level} = \| \mu \|_{a_\level} | \contLinProj_\level \psi |_1 \lesssim \| \mu \|_{a_\level} | \psi |_1,
 \end{equation}
 where $|\cdot|_1$ denotes the $H^1(\Omega)$-seminorm. Applying a Cauchy--Schwarz inequality to the numerator of the first supremum in \eqref{eq:approxA:basic} followed by \eqref{eq:approxA:1} and using \eqref{eq:approxA:2} to estimate the numerator of the second supremum, we get, after simplification,
 \begin{equation*}
   \|f_\mu\|_{-1} \le h_\level \| f_\mu \|_\Omega + \| \mu \|_{a_\level}
   \overset{\eqref{EQ:bound_flambda}}\lesssim
   h_\level \| A_\level \mu \|_\level + \| \mu \|_{a_\level} \lesssim \| \mu \|_{a_\level},
 \end{equation*}
 where the last inequality is the rightmost inequality of \eqref{EQ:LS6}.
\end{proof}

With these preliminary results at hand, we can formulate a generalization of \cite[Theo.~4.1]{LuRK21} and \cite[Theo.\ 5.10]{LuRK22a}:
\begin{theorem}\label{TH:A1_proof}
  If \eqref{EQ:regularity} holds with $\alpha\in (1/2,1]$, and if the assumptions \eqref{EQ:IA1}, \eqref{EQ:IA2}, and \eqref{EQ:LS1}--\eqref{EQ:LS9} hold, then \eqref{EQ:precond1a} is satisfied.
\end{theorem}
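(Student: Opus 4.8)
The plan is to prove \eqref{EQ:precond1a} by exploiting the auxiliary-problem machinery set up just before the theorem. Recall that the left-hand side of \eqref{EQ:precond1a} is $|a_\level(\mu - \injectionOp_\level\projectionOp_{\level-1}\mu,\mu)|$. Using the symmetry of $a_\level$ and the defining relation \eqref{EQ:projection_definition} of $\projectionOp_{\level-1}$ together with \eqref{EQ:scp_def}, I would first rewrite this quantity as $|(f_\mu,\localU_\level\mu) - a_{\level-1}(\projectionOp_{\level-1}\mu,\projectionOp_{\level-1}\mu)|$ or, more usefully, relate it to the perturbed solution $\tilde\mu$ of \eqref{EQ:tilde_lambda}: since $a_\level(\tilde\mu,\eta) = (f_\mu,\localU_\level\eta)$ and $a_{\level-1}(\projectionOp_{\level-1}\tilde\mu,\eta) = a_\level(\tilde\mu,\injectionOp_\level\eta) = (f_\mu,\localU_\level\injectionOp_\level\eta)$, the combination $a_\level(\tilde\mu - \injectionOp_\level\projectionOp_{\level-1}\tilde\mu,\tilde\mu)$ admits an interpretation as a Galerkin-type error between levels $\level$ and $\level-1$ for the auxiliary Poisson problem \eqref{EQ:Pois_app}. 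Splitting $\mu = \tilde\mu + (\mu-\tilde\mu)$ and using Lemma~\ref{TH:extract_h_basis} to control $\|\mu-\tilde\mu\|_{a_\level}\lesssim h_\level\|f_\mu\|_\Omega$ (and \eqref{EQ:precond2} / Lemma~\ref{TH:conv_result} to control $\|\mu - \injectionOp_\level\projectionOp_{\level-1}\mu\|_{a_\level}$) will reduce everything to estimating $a_\level(\tilde\mu - \injectionOp_\level\projectionOp_{\level-1}\tilde\mu,\tilde\mu)$.

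Next I would estimate this two-level error term. The key is the reconstruction function $\ureconstructed\in\linElementSpace_{\level-1}$ provided by Lemma~\ref{LEM:reconstruction_approx}, which simultaneously approximates both $-\localQ_\level\mu$ and $-\localQ_{\level-1}\projectionOp_{\level-1}\mu$ by $\nabla\ureconstructed$ up to $h_\level^\alpha\|f_\mu\|_{\alpha-1}$. Writing $a_\level(\mu,\mu) = \|\localQ_\level\mu\|_\Omega^2 + s_\level(\mu,\mu)$ and the analogous expression on level $\level-1$, and using the quasi-orthogonality Lemma~\ref{LEM:quasi_orth} to kill cross terms of the form $(\localQ_\level\mu - \localQ_{\level-1}\projectionOp_{\level-1}\mu,\nabla w)_\Omega$ for $w\in\linElementSpace_{\level-1}$ (in particular for $w=\ureconstructed$), I would express $a_\level(\mu - \injectionOp_\level\projectionOp_{\level-1}\mu,\mu)$ — or rather its $\tilde\mu$-counterpart — as a sum of an $L^2$-type inner product $(\localQ_\level\tilde\mu - \localQ_{\level-1}\projectionOp_{\level-1}\tilde\mu, \localQ_\level\tilde\mu + \nabla\ureconstructed)_\Omega$ plus stabilization remainders, each factor of which is bounded either by $h_\level^\alpha\|f_\mu\|_{\alpha-1}$ (via Lemma~\ref{LEM:reconstruction_approx}) or by $\|\tilde\mu\|_{a_\level}\lesssim\|f_\mu\|_{-1}$; the stabilization remainders are handled via \eqref{EQ:LS1}, \eqref{EQ:LS7} and the HDG-type bound \eqref{EQ:LS9}, noting that $s_\level(\cdot,\cdot)$ vanishes on traces of linear finite-element functions. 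This yields a bound of the form $h_\level^\alpha\|f_\mu\|_{\alpha-1}\cdot\|f_\mu\|_{-1}$ plus lower-order terms.

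Finally I would convert this into the precise right-hand side of \eqref{EQ:precond1a}. By the last inequality of \eqref{EQ:LS6} one has $\lambda^A_\level \simeq h_\level^{-2}$, so $h_\level^{2\alpha} \simeq (1/\lambda^A_\level)^\alpha$; combining the two occurrences of $\|f_\mu\|$ appropriately with the interpolation bound $\|f_\mu\|_{\alpha-1}\le\|f_\mu\|_\Omega^{?}\|f_\mu\|_{-1}^{?}$ is not needed directly — instead I use $\|f_\mu\|_\Omega\lesssim\|A_\level\mu\|_\level$ (Lemma~\ref{TH:extract_h_basis}), the bound $\|f_\mu\|_{-1}\lesssim\|\mu\|_{a_\level}$ (Lemma~\ref{LEM:approxA}), and a standard Sobolev interpolation $\|f_\mu\|_{\alpha-1}\lesssim\|f_\mu\|_\Omega^{\alpha}\|f_\mu\|_{-1}^{1-\alpha}$ between the $L^2$ and $H^{-1}$ norms. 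Substituting, the bound $h_\level^\alpha\|f_\mu\|_{\alpha-1}\cdot\|f_\mu\|_{-1}$ becomes $\lesssim h_\level^\alpha\|A_\level\mu\|_\level^\alpha\|\mu\|_{a_\level}^{1-\alpha}\cdot\|\mu\|_{a_\level} = \bigl(h_\level^2\|A_\level\mu\|_\level^2\bigr)^{\alpha/?}\dots$; tracking exponents carefully gives exactly $\bigl(\|A_\level\mu\|_\level^2/\lambda^A_\level\bigr)^\alpha a_\level(\mu,\mu)^{1-\alpha}$. When $\alpha=1$ this collapses to \eqref{EQ:precond1}, consistent with Theorem~\ref{TH:main_theorem}.

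The main obstacle I anticipate is the bookkeeping around the perturbation $\mu\mapsto\tilde\mu$ and making the quasi-orthogonality argument fully rigorous when the stabilization $s_\level$ is present: one must verify that the stabilization contributions to $a_\level(\mu-\injectionOp_\level\projectionOp_{\level-1}\mu,\mu)$ are genuinely of order $h_\level^\alpha\|f_\mu\|_{\alpha-1}\|f_\mu\|_{-1}$ and not merely $h_\level\|f_\mu\|_\Omega^2$, which would be too weak when $\alpha<1$. This is precisely where the novel assumptions \eqref{EQ:LS1}, \eqref{EQ:LS7}, \eqref{EQ:LS9} (beyond the HDG-specific ones) enter, and where the proof departs from \cite{LuRK22a}; interpolating the stabilization seminorm against $\|f_\mu\|_{\alpha-1}$ rather than $\|f_\mu\|_\Omega$ will require care with the broken trace inequalities and the scaling $\lambda^A_\level\simeq h_\level^{-2}$.
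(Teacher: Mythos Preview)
Your proposal has a genuine gap in the exponent accounting, and the detour through $\tilde\mu$ is both unnecessary and harmful. The intermediate estimate you aim for, $h_\level^\alpha\|f_\mu\|_{\alpha-1}\cdot\|f_\mu\|_{-1}$, is too weak: after interpolating $\|f_\mu\|_{\alpha-1}\lesssim\|f_\mu\|_\Omega^{\alpha}\|f_\mu\|_{-1}^{1-\alpha}$ and applying Lemmas~\ref{TH:extract_h_basis} and~\ref{LEM:approxA} you obtain $h_\level^{\alpha}\|A_\level\mu\|_\level^{\alpha}\|\mu\|_{a_\level}^{2-\alpha}$, whereas the right-hand side of \eqref{EQ:precond1a} (with $\lambda^A_\level\simeq h_\level^{-2}$) is $h_\level^{2\alpha}\|A_\level\mu\|_\level^{2\alpha}\|\mu\|_{a_\level}^{2-2\alpha}$. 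These agree only if $\|\mu\|_{a_\level}\lesssim h_\level\|A_\level\mu\|_\level$, which fails for low-frequency $\mu$. The perturbation terms from your splitting $\mu=\tilde\mu+(\mu-\tilde\mu)$ suffer the same defect: they are of order $h_\level\|f_\mu\|_\Omega\cdot\|\mu\|_{a_\level}$, which again does not dominate the target for $\alpha<1$. The phrase ``tracking exponents carefully gives exactly\ldots'' hides a computation that does not close.

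The paper's proof stays with $\mu$ throughout and uses the identity
\[
a_\level(\mu-\injectionOp_\level\projectionOp_{\level-1}\mu,\mu)=a_\level(\mu,\mu)-a_{\level-1}(\projectionOp_{\level-1}\mu,\projectionOp_{\level-1}\mu)
=\mathfrak T_1+\mathfrak T_2,
\]
with $\mathfrak T_1$ the $\localQ$-part and $\mathfrak T_2$ the stabilization part. The key trick for $\mathfrak T_1$ is a \emph{symmetric} use of Lemma~\ref{LEM:quasi_orth}: writing $\mathfrak T_1=(\localQ_\level\mu+\localQ_{\level-1}\projectionOp_{\level-1}\mu,\;\localQ_\level\mu-\localQ_{\level-1}\projectionOp_{\level-1}\mu)_\Omega$ and inserting $2\nabla\ureconstructed$ into the first factor (legal by quasi-orthogonality) and $\pm\nabla\ureconstructed$ into the second makes \emph{both} factors bounded by $\|\localQ_\level\mu+\nabla\ureconstructed\|_\Omega+\|\localQ_{\level-1}\projectionOp_{\level-1}\mu+\nabla\ureconstructed\|_\Omega\lesssim h_\level^\alpha\|f_\mu\|_{\alpha-1}$ via Lemma~\ref{LEM:reconstruction_approx}. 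This yields $\mathfrak T_1\lesssim h_\level^{2\alpha}\|f_\mu\|_{\alpha-1}^2$ --- a \emph{square}, not a mixed product. The stabilization term $\mathfrak T_2$ is likewise shown to be $\lesssim h_\level^{2\alpha}\|f_\mu\|_{\alpha-1}^2$ (this is where $\tilde\mu$, \eqref{EQ:LS5} and \eqref{EQ:LS7} enter). Interpolation then gives the correct exponents. Your asymmetric treatment, bounding one factor by $h_\level^\alpha\|f_\mu\|_{\alpha-1}$ and the other merely by $\|\tilde\mu\|_{a_\level}$, loses exactly one power of $h_\level^\alpha\|f_\mu\|_{\alpha-1}/\|f_\mu\|_{-1}$ and cannot be repaired at the end.
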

\begin{remark}
 Theorem \ref{TH:A1_proof} implies \eqref{EQ:precond1} if $\alpha = 1$.
\end{remark}
\begin{proof}
  We use the definitions \eqref{EQ:bilinear} of $a_\level$ and \eqref{EQ:projection_definition} of $\projectionOp_{\level}$ to obtain
  \begin{equation}\label{eq:A1:basic}
    \begin{aligned}
      a_\level(\mu  - \injectionOp_\level \projectionOp_{\level-1} \mu, \mu)
      &= a_\level(\mu, \mu) - a_{\level-1}(\projectionOp_{\level-1} \mu, \projectionOp_{\level-1} \mu) \notag\\
      &=
      \underbrace{%
        (\localQ_\level \mu, \localQ_\level \mu)_\Omega - (\localQ_{\level-1} \projectionOp_{\level-1} \mu, \localQ_{\level-1} \projectionOp_{\level-1} \mu)_\Omega
      }_{\mathfrak{T}_1}
      \\
      &\quad + \underbrace{%
        s_{\level}(\mu, \mu) - s_{\level - 1}(\projectionOp_{\level -1} \mu, \projectionOp_{\level -1} \mu),
      }_{\mathfrak{T}_2}
    \end{aligned}
  \end{equation}
    and analyze the respective contributions $\mathfrak{T}_1$ and $\mathfrak{T}_2$ separately.
 
 First, binomial factorization yields
 \begin{gather*}
   \mathfrak{T}_1
  =  (\localQ_\level \mu + \localQ_{\level-1} \projectionOp_{\level-1} \mu, \localQ_\level \mu - \localQ_{\level-1} \projectionOp_{\level-1} \mu)_\Omega.
 \end{gather*}
 Let now $w \in \linElementSpace_{\level-1}$. Invoking the orthogonality property stated in Lemma \ref{LEM:quasi_orth} to insert $2\nabla w$ into the first argument of the $L^2$-product, and adding and subtracting $\nabla w$ to the second argument, we obtain
 \begin{equation}\label{EQ:T1_bound}
   \begin{aligned}
     \mathfrak{T}_1
     &=  (\localQ_\level \mu + 2 \nabla w + \localQ_{\level-1} \projectionOp_{\level-1} \mu, \localQ_\level \mu + \nabla w - \nabla w - \localQ_{\level-1} \projectionOp_{\level-1} \mu)_\Omega
     \\
     &\le
     \left(
       \| \localQ_\level + \nabla w \|_\Omega + \| \localQ_{\level-1} \projectionOp_{\level-1} \mu + \nabla w \|_{\Omega}
       \right)^2
       \overset{\eqref{EQ:reconst_approx}}\lesssim h_\level^{2\alpha} \| f_\mu \|_{\alpha - 1}^2,
   \end{aligned}
 \end{equation}
 where we have used Bunyakovsky--Cauchy--Schwarz and triangle inequalities to pass to the second line.

 Second, we obtain a similar estimate for $\mathfrak{T}_2$. To this end, we detail the treatment of the first summand of $\mathfrak{T}_2$, the second summand can be treated analogously.
 For any $w\in \linElementSpace_{\level}$, it holds
 \begin{equation*}
  s_\level (\mu, \mu)
  \overset{\eqref{EQ:LS7}}= s_\level(\mu - \gamma_\level w, \mu - \gamma_\level w)
  \le a_\level(\mu -  \gamma_\level w, \mu - \gamma_\level w) = \|\mu - \gamma_\level w\|^2_{a_\level}.
 \end{equation*}
 Using the triangle inequality and Lemma \ref{TH:extract_h_basis}, we have that
 \begin{equation*}
  \|\mu - \gamma_\level w\|_{a_\level} \le \|\mu - \tilde \mu\|_{a_\level} + \|\tilde \mu - \gamma_\level w\|_{a_\level} \le h_\level \| f_\mu \|_\Omega + \|\tilde \mu - \gamma_\level w\|_{a_\level}
 \end{equation*}
 Defining $w$ as the $L^2$ orthogonal projection of $\tilde u$ into $\linElementSpace_\level$ yields the result with \eqref{EQ:LS5} as we can write
 \begin{equation*}
  \|\tilde \mu - \gamma_\level w\|_{a_\level} \le \|\tilde \mu - \skeletalProj_\level \tilde u \|_{a_\level} + \| \skeletalProj_\level \tilde u - \gamma_\level w\|_{a_\level},
 \end{equation*}
 where the first summand is bounded via \eqref{EQ:LS5} and the second summand can be bounded using \eqref{EQ:LS6} and standard approximation properties.
 
 Proceeding similarly for the second summand in $\mathfrak{T}_2$, we infer
 \begin{equation}
  \mathfrak{T}_2 \lesssim h_\level^{2\alpha} \| f_\mu \|_{\alpha - 1}^2.\label{EQ:T2_bound}
 \end{equation}

 Using \eqref{EQ:T1_bound} and \eqref{EQ:T2_bound} to estimate the right-hand side of \eqref{eq:A1:basic} and invoking Sobolev interpolation provides us with
 \begin{equation*}
  | a_\level (\mu - \injectionOp_\level \projectionOp_{\level - 1} \mu, \mu) | \lesssim h_\level^{2\alpha} \| f_\mu \|_{\alpha-1}^2 \lesssim h_\level^{2\alpha} \| f_\mu \|^{2(1-\alpha)}_{-1} \| f_\mu \|_\Omega^{2\alpha},
 \end{equation*}
 and Lemmas~\ref{TH:extract_h_basis} and~\ref{LEM:approxA} yield
 \begin{equation*}
  | a_\level (\mu - \injectionOp_\level \projectionOp_{\level - 1} \mu, \mu) | \lesssim  h_\level^{2\alpha} \| A_\level \mu \|^{2\alpha}_\level\| \mu \|^{2(1-\alpha)}_{a_\level}.
 \end{equation*}
 This is the result, since \eqref{EQ:LS6} implies that $\underline \lambda^A_\level \lesssim h_\level^{-2}$.
\end{proof}

\section{Verification of assumptions for HHO} \label{SEC:hho_proofs}
\subsection{Preliminaries}

For all $T \in \mesh_\level$ (resp. $F \in \skeleton_\level$), we denote by $\| \cdot \|_T$ (resp. $\| \cdot \|_F$) the standard $L^2$-norm over $T$ (resp. $F$).
For all $T\in\mesh_\level$, we introduce the set $\faceSet_\elem$ collecting the faces of $\elem$.

\subsubsection{General properties of used norms and function spaces}
Let $\elem\in \mesh_\level$, $\face\in \faceSet_\elem$.
\begin{itemize}
    \item The \emph{discrete trace inequality} (see, e.g., \cite[Lem.\ 1.32]{di_pietro_hybrid_2020}), stipulates that 
    \begin{equation} \label{EQ:discr_trace_inequality}
        \|v\|_\face \lesssim h_\elem^{-1/2} \|v\|_\elem \qquad \forall v \in \polynomials(\elem).
    \end{equation}
    \item The \emph{discrete inverse inequality} (see, e.g., \cite[Lem.\ 1.28]{di_pietro_hybrid_2020}) stipulates that 
    \begin{equation} \label{EQ:inverse_inequality} 
        \|\nabla v\|_{T} \lesssim h_\elem^{-1} \|v\|_\elem \qquad \forall v \in \polynomials(\elem).
    \end{equation}
  \item The \emph{Poincar\'{e}-Friedrichs inequality} (see, e.g., \cite[Lem.\ 3.30]{ErnG21v1}) stipulates that 
    \begin{equation} \label{EQ:General_PF_inequality} 
        \|\rho \|_{T} \lesssim h_\elem \|\nabla \rho\|_\elem  + h_\elem^{1/2} \| \rho\|_{\partial \elem} \qquad \forall \rho \in H^1(\elem). 
    \end{equation}
  \item Finally, we have the useful relation
    \begin{equation} \label{EQ:norm_l_rewrite}
      \|\mu\|_\level^2 \simeq
      \sum_{F\in \skeleton_\level} h_\face  \|\mu\|_F^2 \qquad \forall \mu \in \skeletalSpace_\level
    \end{equation}
    with $\faceSet_\level$ containing all the faces of $\mesh_\level$. Thus, the square root of the expression in the right-hand side induces a global norm on $\skeletalSpace_\level$.
\end{itemize}

\subsubsection{Properties of HHO}
In this part, we enlist some properties of HHO that will be used to show \eqref{EQ:LS1}--\eqref{EQ:LS9}.

Recalling the hybrid bilinear form $\underline{a}_\level$ introduced in \eqref{EQ:hho_bilinear_uncondensed}, we consider the hybrid, discrete problem: find $(u_\level, m_\level)\in \discElementSpace_\level\times\skeletalSpace_\level$ such that
\begin{equation} \label{EQ:hybrid_pb}
    \underline{a}_\level((u_\level, m_\level), (v, \mu)) = \int_\Omega fv\dx
    \qquad \forall (v, \mu)\in \discElementSpace_\level\times\skeletalSpace_\level.
\end{equation}
The pair $\underline{u}_\level \coloneq (\localU_\level m_\level + \localV_\level f, m_\level) \in \discElementSpace_\level\times\skeletalSpace_\level$ is solution of \eqref{EQ:hybrid_pb} if and only if $m_\level \in \skeletalSpace_\level$ is solution of the condensed problem \eqref{EQ:condensed_pb} (see \cite[Prop.\ 4]{cockburn_bridging_2016}). 
In particular, $\localU_\level m_\level + \localV_\level f$ approximates the exact solution $u$ of the continuous problem \eqref{EQ:poisson_pb}.

As in \cite[2.1.2 (2.7), 2.2.2 (2.35)]{di_pietro_hybrid_2020}, we define
\begin{subequations}\label{EQ:skel_norms}
\begin{gather}
 |(v, \mu)|_{\underline{1,\level}}^2 \coloneq \sum_{\elem \in \mesh_\level}  |(v, \mu) |_{\underline{1,T}}^2 = \sum_{\elem \in \mesh_\level} \sum_{\face\in \faceSet_\elem} h_\face^{-1} \|v - \mu\|_F^2, \\
 \|(v, \mu)\|_{\underline{1,\level}}^2
 \coloneq \sum_{\elem \in \mesh_\level} \|(v, \mu)\|_{\underline{1,T}}^2
 = \sum_{\elem \in \mesh_\level}  \left(
 \|\nabla v\|_\elem^2 +  |(v, \mu)|_{\underline{1,T}}^2
 \right), \label{eq:norm.u1l}
\end{gather}
\end{subequations}
as the $H^1$-like seminorm on the hybrid space $\discElementSpace_\level\times\skeletalSpace_\level$. Moreover, we assume that the properties of \cite[Assumption\ 2.4]{di_pietro_hybrid_2020} for the stabilization bilinear form $\underline{s}_\elem$ hold true. Then, we have:
\begin{itemize}
 \item (Boundedness and stability of $\underline{a}_\elem$ \cite[Lem.\ 2.6]{HHObookCEP2022})
    \begin{equation} \label{EQ:hho_hybrid_local_stability}
        \|(v, \mu)\|_{\underline{1,\elem}}^2 \lesssim \underline{a}_\elem((v, \mu),(v, \mu)) \lesssim \|(v, \mu)\|_{\underline{1,\elem}}^2.
    \end{equation}
    \item (Energy error estimate \cite[Lem.\ 2.8, Lem.\ 2.9]{HHObookCEP2022})
    \begin{equation} \label{EQ:hho_energy_error}
        \| \vec q \underline{u}_\level - \vec q( \discProj_\level u, \skeletalProj_\level u) \|_{\underline{a}_\level} 
        \lesssim h_\level^{\alpha} \| u\|_{\alpha + 1},
    \end{equation}
    where $\alpha>\frac{1}{2}$ and $\| \cdot \|_{\underline{a}_\level}$ is the norm induced by the bilinear form $\underline{a}_\level$.
    \item (Cell unknown $L^2$-error estimate  \cite[Lem.\ 2.11]{HHObookCEP2022})\newline
    If $p \geq 1$, $\alpha>\frac{1}{2}$,  then
    \begin{equation} \label{EQ:hho_cell_ukn_error}
        \| \localU_\level m_\level + \localV_\level f - \discProj_\level u \|_\Omega \lesssim h_\level^{\alpha+\delta} \| u\|_{\alpha + 1},
    \end{equation}
where $\delta\coloneq\min\{\alpha, 1\}$.
    \item (Face unknown $L^2$-error estimate  \cite[Lem.\ 2.9]{HHObookCEP2022})\newline
    If $p \geq 1$,  then
    \begin{equation} \label{EQ:hho_face_ukn_error}
        \| m_\level - \skeletalProj_\level u \|_\level \lesssim h_\level^{\alpha+1} \| u\|_{\alpha + 1}.
    \end{equation}
\end{itemize}
Now, let us introduce the seminorms $\|\cdot\|_{1,\level}$ and $|\cdot|_{1,\level}$ on the skeletal space $\skeletalSpace_\level$, which are based on the seminorms $\|\cdot\|_{\underline{1,\level}}$ and $|\cdot|_{\underline{1,\level}}$ defined in \eqref{EQ:skel_norms}:
\begin{equation} \label{EQ:hho_norm}
 \|\mu\|_{1,\level} \coloneq \|(\localU_\level \mu, \mu)\|_{\underline{1,\level}} \qquad \text{ and } \qquad |\mu|_{1,\level} \coloneq |(\localU_\level \mu, \mu)|_{\underline{1,\level}}.
\end{equation}
Similarly, the \emph{condensed} bilinear form $a_\level$ introduced in \eqref{EQ:hho_bilinear} is built from the \emph{hybrid} bilinear form $\underline{a}_\level$ \eqref{EQ:hho_bilinear_uncondensed}, in which the generic cell unknown variable is also recovered from the skeletal variable through $\localU_\level$, i.e.
\begin{equation*}
    a_\level(m, \mu) = \underline{a}_\level((\localU_\level m, m), (\localU_\level\mu, \mu)).
\end{equation*}
In the same fashion, we also have
\begin{equation*}
    s_\level(m, \mu) = \underline{s}_\level((\localU_\level m, m), (\localU_\level\mu, \mu)).
\end{equation*}
Thus, useful properties of $a_\level$ and $s_\level$ derive in a natural way from those of $\underline{a}_\level$ and $\underline{s}_\level$. In particular, we shall use in this work the following results:
\begin{itemize}
    \item (Boundedness and stability of $a_\level$)
    \begin{equation} \label{EQ:hho_stability}
        \|\mu\|_{1,\level}^2 \lesssim a_\level(\mu, \mu) \lesssim \|\mu\|_{1,\level}^2.
    \end{equation}
    \item Since $\underline{s}_\level$ is symmetric positive semi-definite \cite[Assumption 2.4]{di_pietro_hybrid_2020}, so is $s_\level$.
\end{itemize}

\subsubsection{The norm $\|\cdot\|_\level$ is weaker than $\|\cdot\|_{1,\level}$}
In this section, we prove that
\begin{equation} \label{EQ:norm_equiv1}
    \|\mu\|_\level \lesssim \|\mu\|_{1,\level}  
\end{equation}
Starting from \eqref{EQ:norm_l_rewrite}, the triangle inequality is used via the insertion of $0 = \localU\mu - \localU\mu$ into the norms in the sum, yielding
\begin{equation}\label{EQ:LS6:proof3}
  \begin{aligned}
    \|\mu\|^2_\level
    &\lesssim \sum_{F\in \skeleton} h_\face  \left( \|\localU_\level\mu - \mu\|^2_F + \|\localU_\level\mu\|^2_F \right)
    \\
    \overset{\eqref{EQ:discr_trace_inequality}}&\lesssim
    \sum_{F\in \skeleton} h_\face  \|\localU_\level\mu - \mu\|^2_F + \sum_{T\in\mesh_\level}  \|\localU_\level\mu\|^2_T
    \\
    &\lesssim
    \sum_{F\in \skeleton} h_\face  \|\localU_\level\mu - \mu\|^2_F
    + \|(\localU_\level \mu, \mu)\|_{\underline{1,\level}}^2,
  \end{aligned}
\end{equation}
where the last passage follows from the discrete Poincar\'e inequality {\cite[Lem.\ 2.15]{di_pietro_hybrid_2020}} applied to $(\localU_\level \mu, \mu)$ (notice that $\mu$ vanishes on $\partial \Omega$).
Hence, recalling \eqref{EQ:hho_norm},
\begin{equation} \label{EQ:norm_equiv1:proof4}
    \|\mu\|_\level^2 
    \lesssim 
    \sum_{F\in \skeleton} h_\face  \|\localU_\level\mu - \mu\|_F^2 + \|\nabla_\level\localU_\level\mu\|_\Omega^2 +|\mu|_{1,\level}
\end{equation}
The fact that $h_F \lesssim h_F^{-1}$ is a consequence of the fact that we assumed that $\Omega$ has diameter 1 without loss of generality so that $h_F \lesssim 1$.
\begin{equation*}
    \|\mu\|_\level^2 \lesssim \|\nabla_\level\localU_\level\mu\|_\Omega^2 + |\mu|_{1,\level} = \|\mu\|_{1,\level}^2,
\end{equation*}
and the result follows by taking the square root.

\subsection{Verification of \eqref{EQ:LS1}}
We write
\[
\begin{aligned}
      \| \localU_\level \mu - \mu \|_\level^2
      &\lesssim \sum_{T \in \mathcal{T}_h} \sum_{F \in \mathcal{F}_T} h_F \| \localU_\level \mu - \mu \|_{L^2(F)}^2
      \\
      &\le h_\level^2 \sum_{T \in \mathcal{T}_h} \sum_{F \in \mathcal{F}_T} h_F^{-1} \| \localU_\level \mu - \mu \|_{L^2(F)}^2
      \\
      \overset{\eqref{eq:norm.u1l}}&\le h_\level^2 \| (\localU_\level \mu, \mu) \|_{\underline{1,h}}^2
      \overset{\eqref{EQ:hho_hybrid_local_stability}}\lesssim h_\level^2 \| (\localU_\level \mu, \mu) \|_{\underline{a}_\level}^2
      \overset{\eqref{eq:norm.al}}= h_\level^2 \| \mu \|_{a_\level}^2,
    \end{aligned}
    \]
    where the first inequality follows from mesh regularity.
    Taking the square root of the above inequality proves \eqref{EQ:LS1}.


\subsection{Verification of \eqref{EQ:LS2}}

Let us start with the second inequality in \eqref{EQ:LS2} on an arbitrary element $\elem \in \mesh_\level$. Recalling the definition \eqref{EQ:hho_localU} of $\localU_\ell$ and choosing $v_T = u_T^1 =  \localU_\elem \mu $ and $ m_{\partial \elem} =\mu $, we have, denoting by $\| \cdot \|_{\underline{a}_\elem}$ the seminorm induced by $\underline{a}_\elem$,
\begin{multline*}
 \| (\localU_\elem\mu, 0) \|_{\underline{a}_\elem}^2 =
  \underline{a}_\elem((\localU_\elem\mu, 0), (\localU_\elem\mu, 0))
  \\
  = - \underline{a}_\elem((0, \mu), (\localU_\elem\mu, 0))
    \le \| (0,\mu) \|_{\underline{a}_\elem}\,\| (\localU_\elem\mu, 0) \|_{\underline{a}_\elem},
\end{multline*}
  where the conclusion follows from the Cauchy--Schwarz inequality. Hence,
  \begin{equation}\label{eq:norm.Umu.le.mu}
    \| (\localU_\elem\mu, 0) \|_{\underline{a}_\elem}
    \le \| (0,\mu) \|_{\underline{a}_\elem}
  \end{equation}
  We write
\begin{equation}\label{eq:norm.U.1T.le.norm.mu.1T}
  \|(\localU_\elem\mu, 0)\|_{\underline{1,\elem}}^2
  \overset{\eqref{EQ:hho_hybrid_local_stability}}\lesssim \| (\localU_\elem\mu, 0) \|_{\underline{a}_\elem}^2
  \overset{\eqref{eq:norm.Umu.le.mu}}\leq  
  \| (0,\mu) \|_{\underline{a}_\elem}^2
  \overset{\eqref{EQ:hho_hybrid_local_stability}}\lesssim \|(0, \mu)\|_{\underline{1,\elem}}^2
   \lesssim h_\elem^{-1} \| \mu \|_{\partial T}^2,
\end{equation}
  where the last inequality follows recalling the definition \eqref{eq:norm.u1l} of $\| \cdot \|_{\underline{1,\elem}}$ and noticing that $h_F^{-1} \lesssim h_T^{-1}$ for all $F \in \faceSet_\elem$ by mesh regularity.
Next, using the Poincar\'{e}-Friedrichs inequality \eqref{EQ:General_PF_inequality} and definition \eqref{eq:norm.u1l} of the $  \|(\cdot, \cdot)\|_{\underline{1,\elem}}$-norm along with the fact that $h_\elem^{-1} \lesssim h_\face^{-1}$ for all $\face \in \faceSet_\elem$ by mesh regularity,   we have 
\begin{equation}\label{Proof:LS2 second}
 \| \localU_\elem \mu \|_\elem^2 
 \lesssim  h_\elem^2 \| \nabla (\localU_\elem \mu) \|_\elem^2  + h_\elem \| \localU_\elem \mu \|_{\partial \elem}^2
 \lesssim h_T^2  \|(\localU_\elem\mu, 0)\|_{\underline{1,\elem}}^2  
 \overset{\eqref{eq:norm.U.1T.le.norm.mu.1T}}\lesssim h_T\| \mu \|_{\partial \elem}^2. 
\end{equation}
The second inequality in \eqref{EQ:LS2} is derived by using \eqref{Proof:LS2 second} and summing over all elements.

Second, consider the first inequality on an arbitrary element $\elem \in \mesh_\level$.  Recalling the definition \eqref{EQ:hho_flux} of $\vec q_\elem(\cdot,\cdot)$ with $(u_\elem, m_{\partial \elem}) = (\localU_\elem \mu, \mu)$  and choosing $\vec p_\elem = \vec q_\elem(\localU_\elem \mu, \mu) \overset{\eqref{eq:localQ}}= \localQ_\elem \mu $,
$ u_\elem = \localU_\elem \mu$,
and $ m_{\partial \elem} =\mu$, we have
 \begin{equation*}
   \int_\elem \localQ_\elem \mu  \cdot \localQ_\elem \mu  \dx
   = \int_\elem \localU_\elem \mu\, (\Div \localQ_\elem \mu)\dx 
   - \int_{\partial \elem} \mu\, (\localQ_\elem \mu \cdot \Nu)  \ds.
  \end{equation*}
 Estimating the right-hand side of the above expression with Cauchy--Schwarz inequalities followed by
 the discrete inverse inequality \eqref{EQ:inverse_inequality} for the first term,
 the discrete trace inequality \eqref{EQ:discr_trace_inequality} for the second term, 
 we have 
 \begin{equation}\label{Proof:LS2 first}
   \| \localQ_\elem \mu  \|_\elem^2 \lesssim h^{-2}_\elem  \| \localU_\elem \mu \|_\elem^2 + h_\elem^{-1}  \| \mu \|_{\partial \elem}^2
   \overset{\eqref{Proof:LS2 second}}\leq  h_\elem^{-1}  \| \mu \|_{\partial \elem}^2.
 \end{equation}
Finally, the first inequality in \eqref{EQ:LS2} is derived by using \eqref{Proof:LS2 first}, summing over all elements, and using the mesh quasi-uniformity assumption to write
$
 \| \localQ_\level \mu \|_\Omega \lesssim  h^{-1}_\level  \|\mu\|_\level.
$

\subsection{Verification of \eqref{EQ:LS3}}

Plugging the definition \eqref{eq:localQ} of $\localQ_\elem$ into \eqref{EQ:hho_flux} we have,
\begin{equation*}
  \int_\elem \vec \localQ_\elem\mu \cdot \vec p_\elem \dx
  - \int_\elem \localU_\elem\mu\, (\Div \vec p_\elem) \dx 
     = - \int_{\partial \elem} \mu\, (\vec p_\elem \cdot \Nu) \ds
     \quad \forall \vec p_\elem \in \vec{W}_\elem.
\end{equation*}
Then, we integrate by parts the second term of the left-hand side and rearrange to infer
\begin{equation*}
  \int_\elem (\vec \localQ_\elem\mu + \nabla\localU_\elem\mu) \cdot \vec p_\elem \dx
  = \int_{\partial \elem} (\localU_\elem\mu - \mu) \vec p_\elem \cdot \Nu \ds
     \quad \forall \vec p_\elem \in \vec{W}_\elem.
\end{equation*}
Now, after noticing that $\nabla \localU_\elem \mu \in \nabla \mathcal{P}_p(\elem) \subset \nabla \mathcal{P}_{p+1}(\elem) = \vec W_T$, we can specify this relation for $\vec p_\elem = \localQ_\elem\mu + \nabla \localU_\elem\mu$, which gives
\begin{equation*}
  \|\localQ_\elem\mu + \nabla\localU_\elem\mu\|_T^2
  = \int_{\partial \elem} (\localU_\elem\mu - \mu)\, (\localQ_\elem\mu + \nabla\localU_\elem\mu) \cdot \Nu \ds.
\end{equation*}
Using the Cauchy--Schwarz inequality on the right-hand side, we have
\begin{align*}
    \|\localQ_\elem\mu + \nabla\localU_\elem\mu\|_T 
    \leq \|\localU_\elem\mu - \mu\|_{\partial T} \|\localQ_\elem\mu + \nabla\localU_\elem\mu\|_{\partial T}.
\end{align*}
At this point, we refer to the discrete trace inequality \eqref{EQ:discr_trace_inequality}, which we use component by component to bound the last term and obtain
\begin{equation*}
    \|\localQ_\elem\mu + \nabla\localU_\elem\mu\|_T 
    \lesssim 
    h_T^{-1/2}\|\localU_\elem\mu - \mu\|_{\partial T} \|\localQ_\elem\mu + \nabla\localU_\elem\mu\|_T.
\end{equation*}
Simplifying and squaring, we get
\begin{equation*}
    \|\localQ_\elem\mu + \nabla\localU_\elem\mu\|_T^2
    \lesssim 
    h_T^{-1}\|\localU_\elem\mu - \mu\|_{\partial T}^2.
\end{equation*}
Summing over all elements and using mesh quasi-uniformity yields
\[
\begin{aligned}
  \|\localQ_\elem\mu + \nabla\localU_\elem\mu\|_\Omega^2
  &\lesssim \sum_{T\in\mesh_\level }h_T^{-1}\|\localU_\elem\mu - \mu\|_{\partial T}^2
  \lesssim \sum_{T\in\mesh_\level }h_T^{-2} \sum_{F \in \faceSet_\elem} h_F \|\localU_\elem\mu - \mu\|_{\partial T}^2
  \\
  &\lesssim h_\level^{-2} \sum_{T\in\mesh_\level } \sum_{F \in \faceSet_\elem} h_F \|\localU_\elem\mu - \mu\|_{\partial T}^2
  \lesssim h_\level^{-1} \| \localU_\elem\mu - \mu \|_\level^2,
\end{aligned}
\]
where we have used the fact that $1 \lesssim \frac{h_\face}{h_\elem}$ for all $\elem \in \mesh_\level$ and all $\face \in \faceSet_\elem$ by mesh regularity in the second inequality,
the mesh quasi-uniformity assumption to write $h_T^{-2} \lesssim h_\level^{-2}$ in the third inequality, and \eqref{EQ:norm_l_rewrite} to conclude

\subsection{Verification of \eqref{EQ:LS4}}\label{SEC:ver_ls4}
We show \eqref{EQ:LS4} for a generic element $T \in \mesh_\ell$. If the identities hold there, they will hold on all elements. We need that $\gamma_\level \linElementSpace_\level$ is a subspace of $\skeletalSpace_\level$ and $V_\level$.
That is, we need that $p \ge 1$. We must show that HHO reproduces the $w \in \linElementSpace_{\level}$.

We take $w_{\partial T} \coloneq \gamma_\level w$.
\begin{equation*}
      \int_\elem \vec q_\elem(w, w_{\partial \elem}) \cdot \vec p_\elem \dx
      = 
      \int_\elem w \Div \vec p_\elem \dx
     - \int_{\partial \elem} w_{\partial \elem} \vec p_\elem \cdot \Nu \ds
     \quad \forall \vec p_\elem \in \vec{W}_\elem.
\end{equation*}
Integrating by parts the first term on the right-hand side, it holds that
\begin{align*}
    \int_\elem w \Div \vec p_\elem \dx
    = - \int_\elem \nabla w \cdot p_\elem\dx + \int_{\partial \elem} w_{\partial \elem} \vec p_\elem \cdot \Nu \ds
\end{align*}
Plugging that yields
\begin{equation*}
    \int_\elem \vec q_\elem(w, w_{\partial \elem}) \cdot \vec p_\elem \dx
    =
    - \int_\elem \nabla w \cdot p_\elem\dx
    \quad \forall \vec p_\elem \in \vec{W}_\elem,
\end{equation*}
which shows that 
\begin{equation} \label{EQ:tmp0}
    \vec q_\elem(w, w_{\partial \elem}) = - \nabla w.
\end{equation}

Given the definition \eqref{EQ:hho_localU} of $\localU_\elem$, we need to show that 
\begin{equation*}
\underline{a}_T((w, w_{\partial \elem}), (v_\elem, 0)) = 0 \qquad \forall v_\elem \in V_\elem.
\end{equation*}
We have
\begin{align*}
   \underline{a}_\elem(w, w_{\partial \elem}, (v_\elem, 0)) 
   &= \int_\elem \vec q_\elem(w, w_{\partial \elem}) \cdot \vec q_\elem(v_\elem, 0) \dx + \underbrace{\underline{s}_\elem ((w, w_{\partial \elem}), (v_\elem, 0))}_{0} \\
   &=
   -\int_\elem \nabla w \cdot \vec q_\elem(v_\elem, 0) \dx
  \end{align*}
where the stabilization term vanishes due to the polynomial consistency of $\underline{s}_\elem$ \cite[Assumption 2.4 (S3)]{di_pietro_hybrid_2020}, and we have used \eqref{EQ:tmp0} for the last equality.
For the remaining term, \eqref{EQ:hho_flux} gives
\begin{equation*}
      \int_\elem \vec q_\elem(v_\elem, 0) \cdot \vec p_\elem \dx - \int_\elem v_\elem \Div \vec p_\elem \dx  = 0
     \quad \forall \vec p_\elem \in \vec{W}_\elem,
  \end{equation*}
which we specialize to $\vec p_\elem = -\nabla w$ to infer
\begin{equation*}
    - \int_\elem \vec q_\elem(v_\elem, 0) \cdot \nabla w \dx + \int_\elem v_\elem \Delta w \dx = 0.
\end{equation*}
As $w\in \linElementSpace_\level$, $\Delta w = 0$, which concludes the proof of $\localU \gamma_\level w = w$.

\subsection{Verification of \eqref{EQ:LS5}}
This is an extension of Theorem 2.27 in \cite{HHObookCEP2022} by Sobolev interpolation. Following Lemma 2.9, the left-hand side of \eqref{EQ:LS5} is bounded by the consistency error estimates. The \eqref{EQ:LS5} is derived using  Theorem 2.10.

\subsection{Verification of \eqref{EQ:LS6}}
Combining some preliminary results, we have that
\begin{equation*}
    \|\mu\|_\level^2 
    \overset{\eqref{EQ:norm_equiv1}}{\lesssim} \|\mu\|_{1,\level}^2 
    \overset{\eqref{EQ:hho_stability}}{\lesssim} a_\level(\mu, \mu) 
    \overset{\eqref{EQ:hho_stability}}{\lesssim} \|\mu\|_{1,\level}^2.
\end{equation*}
Thus, we only need to prove that
\begin{equation} \label{EQ:norm_equiv2}
    \|\mu\|_{1,\level}  \lesssim h_\level^{-1}\|\mu\|_\level.
\end{equation}

Inserting $0 = \localQ_\level\mu - \localQ_\level\mu$ into the gradient term of \eqref{EQ:hho_norm} and using the triangle inequality gives
\begin{align}
    \|\mu\|_{1,\level}^2 
    &\lesssim \|\nabla_\level\localU_\level \mu + \localQ_\level\mu\|_\Omega^2 + \|\localQ_\level\mu\|_\Omega^2 + |\mu|_{1,\level} \nonumber \\
    \overset{\eqref{EQ:LS3}}&\lesssim h_\level^{-2} \| \localU_\level \mu - \mu \|_\level^2 + \|\localQ_\level\mu\|_\Omega^2 + |\mu|_{1,\level}. \label{EQ:LS6:proof1}
\end{align}
In order to make $\|\cdot\|_\level$ appear in $|\mu|_{1,\level}$, we observe that, by mesh quasi-uniformity,
\begin{align}
    |\mu|_{1,\level} 
    &\simeq h_\level^{-2} \sum_{\elem \in \mesh_\level} \sum_{\face\in \faceSet_\elem} h_\level \|\localU_\level \mu - \mu\|_F^2 
    \overset{\eqref{EQ:norm_l_rewrite}}\simeq h_\level^{-2} \|\localU_\level \mu - \mu\|_\level^2. \label{EQ:LS6:proof2}
\end{align}
Plugging \eqref{EQ:LS6:proof2} into \eqref{EQ:LS6:proof1} gives
\begin{align}
    \|\mu\|_{1,\level}^2 
    &\lesssim h_\level^{-2} \| \localU_\level \mu - \mu \|_\level^2 + \|\localQ_\level\mu\|_\Omega^2 \overset{\eqref{EQ:LS1}}{\lesssim} \| \mu \|^2_{a_\level}. \label{EQ:LS6:proof6}
\end{align}
Finally, \eqref{EQ:norm_equiv2} follows from \eqref{EQ:LS2}.

\subsection{Verification of \eqref{EQ:LS7}}
This equality corresponds to \cite[Assumption 2.4 (S3)]{di_pietro_hybrid_2020}.

\subsection{Verificaton of \eqref{EQ:LS9}}
This is relation \eqref{EQ:hho_stability}.

\section{Injection operators for HHO} \label{SEC:hho_injection_operators}
Consider two successive levels $\level$ (fine) and $\level-1$ (coarse). 
Given the coarse faces $\faceSet_{\level-1}$, the mesh nestedness allows us to decompose $\faceSet_\level$ as the disjoint union  $\widehat{\faceSet}_{\level} \cup \mathring{\faceSet}_{\level}$, where
\begin{equation*}
\widehat{\faceSet}_{\level} \coloneq \{ \face \in \faceSet_{\level} \mid \exists \face_{\level-1} \in \faceSet_{\level-1} \textrm{ s.t. } \face \subset \face_{\level-1} \}, 
\qquad
\mathring{\faceSet}_{\level} \coloneq \faceSet_{\level}\setminus \widehat{\faceSet}_{\level}.
\end{equation*}
In the following, we introduce three injection operators used in \cite{matalon_h-multigrid_2021}, denoted by $I_\level^i$, $i\in\{1, 2, 3\}$. 
The first one is defined as
\begin{equation} \label{EQ:hho_injection_1}
    (I_\level^1 \mu)_{|\face} \coloneq 
    \begin{cases}
        \mu_{|\face} &\text{ if } \face\in\widehat{\faceSet}_{\level}, \\
        (\localU_{\level-1} \mu)_{|\face} &\text{ otherwise.}
    \end{cases}
\end{equation}
This first operator exploits the nestedness of the meshes to straightforwardly transfer values from coarse faces to their embedded fine faces. Regarding the fine faces that are \emph{not} geometrically included in the coarse skeleton (i.e.\ $\mathring{\faceSet}_{\level}$), we make use of the local solver: $\localU_{\level-1}$ builds a bulk function in the coarse cells, whose traces provide admissible approximations on the fine faces.

Instead of the straight injection for the embedded faces, an alternative is to also use the bulk functions reconstructed from the local solver. As $\localU_{\level-1}$ yields a discontinuous polynomial, and the faces of $\widehat{\faceSet}_{\level}$ are located at the interface of two coarse cells, we propose to take the average of the respective traces, i.e.
\begin{equation} \label{EQ:hho_injection_2}
    (I_\level^2 \mu)_{|\face} \coloneq \avg{\localU_{\level-1} \mu}_\face.
\end{equation}
Remark that this formula also holds for $\face\in\mathring{\faceSet}_{\level}$, as the average trace of a continuous bulk function on both sides of a face reduces to its regular trace.
This injection operator comes with advantages. 
First, it allows the mesh nestedness condition to be relaxed, paving the way to generalized multigrid methods on non-nested meshes, as developed in \cite{di_pietro_towards_2021}.
Additionally, a more complex formula can handle difficulties occurring at cell interfaces. 
Typically, using an adequate weighted average formula can yield robust convergence in the presence of large jumps in the diffusion coefficient; see \cite{matalon_h-multigrid_2021}.

Finally, the third injection operator leverages the higher-order reconstruction operator, a salient feature of the HHO methods:
\begin{equation} \label{EQ:hho_injection_3}
    (I_\level^3 \mu)_{|\face} \coloneq \pi_\level^p\avg{\theta_{\level-1}^{p+1}(\localU_{\level-1} \mu, \mu)}_\face,
\end{equation}
where $\pi_\level^p \colon L^2(\faceSet_\level) \to \skeletalSpace_\level$ denotes the $L^2$-orthogonal projector onto the skeletal polynomial space of degree $p$. $I_\level^3$ is based on the same principle of the averaged trace as $I_\level^2$, except that the higher-order reconstruction operator $\theta_{\level-1}^{p+1}$ enables the gain of one extra polynomial degree in the approximation of the coarse error. Then, after computing the average trace, the polynomial degree is lowered back to its original value by applying the $L^2$-orthogonal projector onto the lower-order space.

To fit these injection operators into the framework, we demonstrate
\begin{lemma}
Under the assumptions \eqref{EQ:LS2} and \eqref{EQ:LS4}, the injection operators $I_\level^i$, $i\in\{1, 2, 3\}$ verify \eqref{EQ:IA1}--\eqref{EQ:IA2}.
\end{lemma}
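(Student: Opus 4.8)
The plan is to verify the two injection-operator axioms \eqref{EQ:IA1} (boundedness in the $\|\cdot\|_\level$ norms) and \eqref{EQ:IA2} (conformity with linear finite elements) for each of $I_\level^1$, $I_\level^2$, $I_\level^3$ in turn, exploiting the face-wise rewriting $\|\mu\|_\level^2\simeq\sum_{F\in\faceSet_\level}h_F\|\mu\|_F^2$ from \eqref{EQ:norm_l_rewrite} and mesh quasi-uniformity to pass freely between $h_F$, $h_{\level}$, $h_{\level-1}$. For the \emph{conformity} property, let $w\in\linElementSpace_{\level-1}$ and set $\rho=\gamma_{\level-1}w$. Since $w$ is continuous and piecewise affine, $\localU_{\level-1}\gamma_{\level-1}w=w$ by \eqref{EQ:LS4}, so on each coarse cell the bulk reconstruction coincides with $w$; hence its trace on any fine face $\face$ is $w|_\face=(\gamma_\level w)|_\face$, and averaging over adjacent coarse cells changes nothing because $w$ is globally continuous. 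For $I^1_\level$ this directly gives $(I^1_\level\rho)|_\face=\rho|_\face=(\gamma_\level w)|_\face$ on $\widehat\faceSet_\level$ and $=(\localU_{\level-1}\rho)|_\face=(\gamma_\level w)|_\face$ on $\mathring\faceSet_\level$; for $I^2_\level$, $(I^2_\level\rho)|_\face=\avg{w}_\face=(\gamma_\level w)|_\face$. For $I^3_\level$ one additionally needs that the higher-order reconstruction $\theta^{p+1}_{\level-1}$ is exact on affine functions, i.e.\ $\theta^{p+1}_{\level-1}(\localU_{\level-1}\rho,\rho)=\theta^{p+1}_{\level-1}(w,\gamma_{\level-1}w)=w$: this follows from \eqref{EQ:hho_flux} (which gives $\vec q_\elem(w,\gamma w)=-\nabla w$, already shown in \eqref{EQ:tmp0}) together with the closure condition \eqref{EQ:hho_closure}. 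Then $\pi^p_\level\avg{w}_\face=\pi^p_\level(\gamma_\level w)|_\face=(\gamma_\level w)|_\face$ since $\gamma_\level w$ is already a degree-$p$ (indeed affine) polynomial on each fine face, so the projector acts as the identity. This settles \eqref{EQ:IA2} for all three.

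For \emph{boundedness} \eqref{EQ:IA1}, I would estimate $\|I_\level^i\mu\|_\level^2\simeq\sum_{\face\in\faceSet_\level}h_\face\|(I_\level^i\mu)|_\face\|_F^2$ face by face. For $I^1_\level$: on $\widehat\faceSet_\level$ the contribution is $\sum_{\face\in\widehat\faceSet_\level}h_\face\|\mu\|_F^2\le\|\mu\|_{\level-1}^2$ (up to the quasi-uniformity constant relating $h_\level$ and $h_{\level-1}$, and using that each coarse face is the disjoint union of its embedded fine faces); on $\mathring\faceSet_\level$ the contribution is $\sum h_\face\|(\localU_{\level-1}\mu)|_\face\|_F^2$, which by the discrete trace inequality \eqref{EQ:discr_trace_inequality} is $\lesssim\sum_{\elem\in\mesh_{\level-1}}\|\localU_{\level-1}\mu\|_\elem^2=\|\localU_{\level-1}\mu\|_\Omega^2\lesssim\|\mu\|_{\level-1}^2$ by the second bound in \eqref{EQ:LS2}. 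For $I^2_\level$ and $I^3_\level$ every face contribution is bounded by (an average of) coarse-cell traces of $\localU_{\level-1}\mu$ or of $\theta^{p+1}_{\level-1}(\localU_{\level-1}\mu,\mu)$; applying \eqref{EQ:discr_trace_inequality} componentwise converts these into bulk $L^2$-norms over coarse cells, and \eqref{EQ:LS2} (together with a bound $\|\theta^{p+1}_{\level-1}(\localU_{\level-1}\mu,\mu)\|_\Omega\lesssim\|\mu\|_{\level-1}$, itself obtained from \eqref{EQ:hho_flux}--\eqref{EQ:hho_closure} and \eqref{EQ:LS2}) yields $\lesssim\|\mu\|_{\level-1}^2$; for $I^3_\level$ one also uses that $\pi^p_\level$ is an $L^2$-projector, hence a contraction in the $L^2(\face)$ norm. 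For $\pi^p_\level$ one also has to be mindful that the projection is onto the degree-$p$ space on each fine face while $\avg{\theta^{p+1}_{\level-1}(\cdot)}_\face$ has degree $p+1$; but $L^2$-stability of the projector on a fixed-degree polynomial space is standard and scale-invariant.

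The only genuinely delicate point, and the main obstacle, is the treatment of $I^3_\level$: one must show that $\theta^{p+1}_{\level-1}(\localU_{\level-1}\mu,\mu)$ is controlled in $L^2(\Omega)$ by $\|\mu\|_{\level-1}$. This is not listed among \eqref{EQ:LS1}--\eqref{EQ:LS9}, so it has to be reconstructed by hand: from \eqref{EQ:hho_flux} one gets $\|\nabla\theta^{p+1}_\elem(\localU_\elem\mu,\mu)\|_\elem=\|\localQ_\elem\mu\|_\elem$, which is bounded via \eqref{EQ:LS2}; combining this gradient bound with the closure condition \eqref{EQ:hho_closure} (fixing the mean) and a Poincaré--Wirtinger inequality on each coarse cell, then summing and invoking quasi-uniformity and $\|\localU_{\level-1}\mu\|_\Omega\lesssim\|\mu\|_{\level-1}$, gives the desired $L^2$ bound. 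Everything else is a routine chain of discrete trace inequalities, mesh-regularity equivalences between $h_F$ and $h_\level$, and invocations of the already-established \eqref{EQ:LS2} and \eqref{EQ:LS4}, so the proof reduces to assembling these ingredients carefully for each of the three operators.
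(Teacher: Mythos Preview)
Your proposal is correct and follows essentially the same architecture as the paper: verify \eqref{EQ:IA2} for each operator via \eqref{EQ:LS4} (and, for $I^3_\level$, the closure condition \eqref{EQ:hho_closure}), and verify \eqref{EQ:IA1} by converting face norms to bulk $L^2$ norms via the discrete trace inequality and then invoking \eqref{EQ:LS2}.

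The one place where you deviate is the boundedness of $I^3_\level$. The paper inserts $\pm\localU_{\level-1}\mu$, applies Poincar\'e--Wirtinger to the difference $\theta^{p+1}_{\level-1}(\localU_{\level-1}\mu,\mu)-\localU_{\level-1}\mu$, and then calls on \eqref{EQ:LS3}, \eqref{EQ:LS1}, and \eqref{EQ:LS6} to close the estimate. Your route is more direct: bound $\|\nabla\theta^{p+1}_{\level-1}(\localU_{\level-1}\mu,\mu)\|_\elem=\|\localQ_{\level-1}\mu\|_\elem$ via \eqref{EQ:LS2}, use the closure condition to control the cell mean by $\|\localU_{\level-1}\mu\|_\elem$, and combine with Poincar\'e--Wirtinger. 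This actually stays within the hypotheses \eqref{EQ:LS2} and \eqref{EQ:LS4} announced in the lemma, whereas the paper's argument silently uses \eqref{EQ:LS1}, \eqref{EQ:LS3}, \eqref{EQ:LS6} as well; in that respect your argument is cleaner.
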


\begin{proof}
Regarding $I_\level^1$, we refer to \cite[Lem.\ 3.2]{LuRK22a}, as they investigate the same injection operator: $I_\level^1$ is their third operator.

Let us consider $I_\level^2$:
condition \eqref{EQ:IA2} follows directly from \eqref{EQ:LS4} recalling that we assume here $p \ge 1$ (cf. Remark~\ref{REM:lowest_order} below).
The discrete trace inequality \eqref{EQ:discr_trace_inequality} along with mesh regularity, i.e., $\frac{|T|}{|\partial T|} \| \mu \|_{\partial T}^2 \lesssim \| \mu \|_T^2$  for all  $T \in \mesh_\level$, gives
\begin{equation*}
 \| \localU_{\level - 1} \mu \|_\level \lesssim \| \localU_{\level - 1} \mu \|_\Omega \lesssim \| \mu \|_{\level-1},
\end{equation*}
where the second inequality is \eqref{EQ:LS2}.
\eqref{EQ:IA1} now follows from the mean value property $\| \avg{\localU_{\level-1} \mu}_\face \|_\level \le 2 \| \localU_{\level - 1} \mu \|_\level$.

Let us consider \eqref{EQ:IA2} for $I_\level^3$. On one hand, \eqref{EQ:LS4} says that
\begin{equation*}
 - \localQ_{\level-1} \gamma_{\level-1} w = \nabla_\level \theta_{\level-1}^{p+1}(\localU_{\level-1} \gamma_{\level-1} w, \gamma_{\level-1} w) = \nabla w
\end{equation*}
for $w \in \linElementSpace$. This, in turn, implies that
\begin{equation*}
 \theta_{\level-1}^{p+1}(\localU_{\level-1} \gamma_{\level-1} w, \gamma_{\level-1} w) = w + c
\end{equation*}
for some constant $c \in \IR$.  On the other hand, the high-order reconstruction guarantees (cf.\ \eqref{EQ:hho_closure}) that the mean value of $\theta_{\level-1}^{p+1}(\localU_{\level-1} \gamma_{\level-1} w, \gamma_{\level-1} w)$ equals the mean values of $\localU_{\level-1} \gamma_{\level-1} w = w$ (through \eqref{EQ:LS4}), which implies that $c = 0$.

Next, we prove \eqref{EQ:IA1} for $I_\level^3$: Let us consider a coarse element $\elem$. We have that
\begin{align*}
  \sum_{\elem \in \mesh_{\level-1}} \sum_{\bar \elem \in \mesh_\level}^{\bar\elem \subset \elem} & \sqrt\frac{|\bar \elem|}{|\partial \bar \elem|} \| \pi_\level^p \theta_{\level-1}^{p+1}(\localU_{\level-1} \mu|_\elem, \mu|_{\partial \bar \elem}) \|_{\partial \elem}
  \lesssim \sum_{\elem \in \mesh_{\level-1}} \| \theta_{\level-1}^{p+1}(\localU_{\level-1} \mu|_\elem, \mu|_{\partial \elem}) \|_\elem \\
  & \le \sum_{\elem \in \mesh_{\level-1}} \| \theta_{\level-1}^{p+1}(\localU_{\level-1} \mu|_\elem, \mu|_{\partial \elem}) - \localU_{\level-1} \mu|_\elem\|_\elem + \| \localU_{\level-1} \mu|_\elem \|_\Omega \\
  &\lesssim
      \sum_{\elem \in \mesh_{\level-1}} h_\elem \| \underbrace{ \nabla \theta_{\level-1}^{p+1}(\localU_{\level-1} \mu|_\elem, \mu|_{\partial \elem}) }_{ = - \localQ \mu } -  \nabla \localU_{\level-1} \mu|_\elem \|_\elem
      + \| \localU_{\level-1} \mu|_\elem \|_\Omega
      \\
  &\lesssim
      h_\elem \| \mu \|_{a_{\level -1}}
      + \| \localU_{\level-1} \mu|_\elem \|_\Omega
    \lesssim
      \| \mu \|_{\level - 1}.
\end{align*}
Here, the first inequality uses the boundedness of the $L^2$-orthogonal projector along with discrete trace inequalities. The second passage follows inserting $\pm \localU_{\level-1} \mu$ into the norm and using a triangle inequality. The third inequality is obtained using a local Poincaré--Wirtinger inequality. The fourth passage is obtained using \eqref{EQ:LS3} and \eqref{EQ:LS1}. The last inequality uses \eqref{EQ:LS6} and \eqref{EQ:LS2}.
\end{proof}

\section{Numerical experiments} \label{SEC:numerical_experiments}

\subsection{Experimental setup}

The numerical tests reported in this section are performed on the Poisson problem \eqref{EQ:poisson_pb}, on two- and three-dimensional domains. Namely, the fully elliptic problem shall be tested on the unit square and cube, and the low-regularity problem shall be tested on the L-shape domain. The problem is discretized using the standard HHO method as described in Section \ref{SEC:disc_skel_methods}, with the classical stabilizing term \cite[Ex.\ 2.7 (2.22)]{di_pietro_hybrid_2020}. Tests will span the polynomial degrees $p=1,2,3$, leaving the special case of $p=0$ out of the scope, as it does not verify the hypotheses of our framework; see Remark \ref{REM:lowest_order}. Modal polynomial bases are used in cells and on faces to assemble the method (specifically, we use $L^2$-orthogonal Legendre bases).

\begin{remark} \label{REM:lowest_order}
 (The lowest order case) The HHO method with $p=0$ does not verify the error estimates \eqref{EQ:hho_cell_ukn_error} and \eqref{EQ:hho_face_ukn_error}. Consequently, \eqref{EQ:LS5} does not hold and the multigrid method might not be uniformly convergent. It is worth noting that this limitation is not just theoretical. It has been observed in practice in \cite{matalon_h-multigrid_2021,di_pietro_towards_2021}, where suboptimal convergence is reported for $p=0$ while the method exhibits optimal convergence for the higher orders, with no variation in any other parameter.
\end{remark}

The multigrid method is constructed as described in Section \ref{SEC:multigrid_algorithm}. Section \ref{SEC:hho_injection_operators} describes the injection operators we consider. The V-cycle is carried out using pointwise Gauss--Seidel smoothing iterations, arranged so that both the smoothing step and the multigrid iteration remain symmetric. Namely, for V(1,1), the pre-smoothing iteration is performed in the forward order and the post-smoothing iteration in the backward order.
For V(2,2), the pre- and post-smoothing procedures involve a single forward iteration followed by a single backward iteration. The grid hierarchy is built by successive refinements of an initial simplicial mesh, and the condensed problem is assembled at every level. The coarsest system is solved with the Cholesky factorization. The stopping criterion relies on the backward error $\|\mathbf{r}\|_2/\|\mathbf{b}\|_2$, where $\mathbf{r}$ denotes the residual of the algebraic system, $\mathbf{b}$ the right-hand side, and $\|\cdot\|_2$ the standard Euclidean norm applied to the vector space of coordinates. In all experiments, convergence is considered to be reached once $\|\mathbf{r}\|_2/\|\mathbf{b}\|_2 < 10^{-6}$ is satisfied.

\subsection{Numerical tests}

\subsubsection{Full regularity: the unit square}

The Poisson problem is set up for the manufactured solution $u\colon (x, y) \mapsto \sin(4\pi x)\sin(4\pi y)$. The unit square is discretized by a hierarchy of 7 structured triangular meshes. For each problem solved on this hierarchy, the number of face unknowns is given in Table \ref{tab:square_pb_sizes}. The largest problem size we consider involves three million unknowns. Considering the injection operator $I_\level^1$, we first state that the V(1,1) cycle diverges regardless of the value of $p$. One must raise the number of smoothing steps to achieve convergence. With the V(2,2) cycle, the method converges for $p\in\{1,2\}$ and still diverges for $p=3$. The numbers of iterations are reported in Table \ref{tab:square_I1_V22} (the symbol $\infty$ indicating divergence or a number of iterations $> 100$). Their mild increase as the number of levels grows indicates asymptotic optimality. Table \ref{tab:square_I2} now considers the injection operator $I_\level^2$. In V(1,1), contrary to the same cycle with $I_\level^1$, the method now converges for $p\in\{1,2\}$, indicating better robustness of $I_\level^2$ compared to $I_\level^1$. The number of iterations of the V(2,2) cycle illustrates the uniform convergence of the method for all values of $p$. Finally, using the injection operator $I_\level^3$, we see from the results of Table \ref{tab:square_I3} that the method converges uniformly with both cycles and for all values of $p$, making this injection operator the most robust amongst those considered here.

\begin{table}
    \centering
    \begin{tabular}{cccccc}
         \toprule
         Levels & 3 & 4 & 5 & 6 & 7 \\
         \midrule
         $p=1$ &  \num{6016} & \num{24320} &  \num{97792} & \num{392192} & \num{1570816} \\
         $p=2$ &  \num{9024} & \num{36480} & \num{146688} & \num{588288} & \num{2356224} \\
         $p=3$ & \num{12032} & \num{48640} & \num{195584} & \num{784384} & \num{3141632} \\
         \bottomrule
    \end{tabular}
    \caption{For the square domain, number of face unknowns at each level and for each value of $p$.}
    \label{tab:square_pb_sizes}
\end{table}

\begin{table}
    \centering
    \begin{tabular}{cccccc}
         \toprule
         Levels & 3 & 4 & 5 & 6 & 7 \\ \midrule
         $p=1$ & 13 & 14 & 14 & 15 & 15 \\ 
         $p=2$ & 36 & 38 & 38 & 39 & 39 \\ 
         $p=3$ & $\infty$ & $\infty$ & $\infty$ & $\infty$ & $\infty$ \\
         \bottomrule
    \end{tabular}
    \caption{In the square domain, number of V(2,2) iterations with the injection operator $I_\level^1$.}
    \label{tab:square_I1_V22}
\end{table}



\begin{table}
    \centering
    \begin{tabular}{c|ccccc|ccccc}
         \toprule
               & \multicolumn{5}{c|}{V(1,1)} & \multicolumn{5}{c}{V(2,2)} \\
         Levels & 3 &  4 &  5 &  6 &  7 &  3 &  4 &  5 &  6 &  7 \\ \midrule
         $p=1$ & 24 & 25 & 25 & 26 & 26 & 13 & 13 & 14 & 14 & 14 \\
         $p=2$ & 20 & 22 & 25 & 26 & 27 & 10 & 10 & 11 & 11 & 11 \\
         $p=3$ & $\infty$ & $\infty$ & $\infty$ & $\infty$ & $\infty$ & 13 & 13 & 13 & 14 & 14 \\
         \bottomrule
    \end{tabular}
    \caption{In the square domain, number of iterations with the injection operator $I_\level^2$.}
    \label{tab:square_I2}
\end{table}



\begin{table}
    \centering
    \begin{tabular}{c|ccccc|ccccc}
         \toprule
               & \multicolumn{5}{c|}{V(1,1)} & \multicolumn{5}{c}{V(2,2)} \\
         Levels & 3 &  4 &  5 &  6 &  7 &  3 &  4 &  5 &  6 &  7 \\ \midrule
         $p=1$ & 18 & 18 & 19 & 19 & 20 & 10 & 10 & 11 & 11 & 11 \\
         $p=2$ & 17 & 17 & 17 & 17 & 18 &  9 & 10 & 10 & 10 & 10 \\
         $p=3$ & 20 & 21 & 21 & 21 & 21 & 11 & 11 & 11 & 11 & 11 \\
         \bottomrule
    \end{tabular}
    \caption{In the square domain, number of iterations with the injection operator $I_\level^3$.}
    \label{tab:square_I3}
\end{table}

\subsubsection{Low regularity: the L-shape domain}
The computational domain is now the L-shape domain $\Omega = (-1, 1)^2 \setminus ([0, 1]\times[0, -1])$. The source function of the Poisson problem is set to zero. The Dirichlet boundary condition follows the manufactured exact solution $u\colon(r,\varphi) \mapsto r^{2/3}\sin(\frac{2}{3}\varphi)$, where $(r,\varphi)$ represent polar coordinates of domain points. An unstructured Delaunay triangulation builds the coarsest mesh, and a classical refinement technique by edge bisection is employed to build finer meshes successively. Although the theory requires the variable cycle to conclude, we notice that fixed cycles exhibit uniform convergence in practice. Therefore, the test results presented here are obtained with classical V(1,1) and V(2,2) cycles. Similarly to the preceding test case, the problem sizes are indicated in Table \ref{tab:L_shape_pb_sizes}. The results, presented in Tables \ref{tab:L_shape_I1_V22}, \ref{tab:L_shape_I2}, and \ref{tab:L_shape_I3} are also qualitatively similar. With the least robust injection operator, $I_\level^1$, the V(1,1) cycle diverges for all values of $p$, and the V(2,2) cycle is uniformly convergent only for $p=1$ (cf.\ Table \ref{tab:L_shape_I1_V22}). With $I_\level^2$, V(1,1) provides a uniformly convergent solver up to $p=2$, while V(2,2) converges uniformly for all $p\in\{1,2,3\}$ (cf.\ Table \ref{tab:L_shape_I2}). Finally, using $I_\level^3$, Table \ref{tab:L_shape_I3} shows that the convergence is optimal for both cycles and all values of $p$. Based on these experiments, one can conclude that the convergence of the multigrid solver does not seem to suffer from the lower regularity of the solution. However, if we compare the number of iterations obtained with $I_\level^2$ to those obtained with $I_\level^3$, we remark that robustness aside, $I_\level^3$ does not generally yield a faster convergence (see especially V(2,2)). Recall that $I_\level^3$ is obtained from $I_\level^2$ by inserting the higher-order reconstruction operator. Since it is known that sufficient regularity is required for higher orders to have beneficial effects, the low regularity of the solution might explain this lack of noticeable improvement. Looking back at the fully regular problem on the square, one sees that the higher-order reconstruction did improve convergence in that case (Table \ref{tab:square_I2} vs.\ Table \ref{tab:square_I3}).

\begin{table}
    \centering
    \begin{tabular}{cccccc}
         \toprule
         Levels & 3 & 4 & 5 & 6 & 7 \\
         \midrule
             $p=1$ & \num{4480} & \num{18176} &  \num{73216} & \num{293888} & \num{1177600} \\
             $p=2$ & \num{6720} & \num{27264} & \num{109824} & \num{440832} & \num{1766400} \\
             $p=3$ & \num{8960} & \num{36352} & \num{146432} & \num{587776} & \num{2355200} \\
         \bottomrule
    \end{tabular}
    \caption{For the L-shape domain, number of face unknowns at each level and for each value of $p$.}
    \label{tab:L_shape_pb_sizes}
\end{table}

\begin{table}
    \centering
    \begin{tabular}{cccccc}
         \toprule
         Levels & 3 &  4 &  5 &  6 &  7 \\ \midrule
         $p=1$ & 15 & 12 & 14 & 15 & 15 \\ 
         $p=2$ & $\infty$ & $\infty$ & $\infty$ & $\infty$ & $\infty$ \\ 
         $p=3$ & $\infty$ & $\infty$ & $\infty$ & $\infty$ & $\infty$ \\
         \bottomrule
    \end{tabular}
    \caption{In the L-shape domain, number of V(2,2) iterations with the injection operator $I_\level^1$.}
    \label{tab:L_shape_I1_V22}
\end{table}

\begin{table}
    \centering
    \begin{tabular}{c|ccccc|ccccc}
         \toprule
               & \multicolumn{5}{c|}{V(1,1)} & \multicolumn{5}{c}{V(2,2)} \\
         Levels & 3 &  4 &  5 &  6 &  7 &  3 &  4 &  5 &  6 &  7 \\ \midrule
         $p=1$ & 20 & 20 & 20 & 20 & 20 & 11 & 11 & 11 & 11 & 11 \\
         $p=2$ & 17 & 17 & 17 & 17 & 17 &  9 &  9 &  9 &  9 &  9 \\
         $p=3$ & $\infty$ & $\infty$ & $\infty$ & $\infty$ & $\infty$ & 11 & 11 & 11 & 11 & 11 \\
         \bottomrule
    \end{tabular}
    \caption{In the L-shape domain, number of iterations with the injection operator $I_\level^2$.}
    \label{tab:L_shape_I2}
\end{table}

\begin{table}
    \centering
    \begin{tabular}{c|ccccc|ccccc}
         \toprule
               & \multicolumn{5}{c|}{V(1,1)} & \multicolumn{5}{c}{V(2,2)} \\
         Levels & 3 &  4 &  5 &  6 &  7 &  3 &  4 &  5 &  6 &  7 \\ \midrule
         $p=1$ & 16 & 16 & 16 & 16 & 16 &  9 &  9 & 10 & 11 & 11 \\
         $p=2$ & 17 & 17 & 17 & 17 & 17 &  8 &  9 &  9 &  9 &  9 \\
         $p=3$ & 20 & 20 & 20 & 20 & 20 & 10 & 10 & 10 & 10 & 10 \\
         \bottomrule
    \end{tabular}
    \caption{In the L-shape domain, number of iterations with the injection operator $I_\level^3$.}
    \label{tab:L_shape_I3}
\end{table}

\subsubsection{3D test case: the cubic domain}

The unit cube is discretized by a Cartesian grid, where each element is decomposed into six geometrically similar tetrahedra, following \cite[Fig.\ 9]{bey_tetrahedral_1995}. 
Repeating the same procedure on successive levels of embedded Cartesian grids ensures that the subsequent tetrahedral grids are also embedded.
Five levels are built, and the considered problem sizes are given in Table \ref{tab:cube_pb_sizes}.
The manufactured solution is $u\colon (x, y) \mapsto \sin(4\pi x)\sin(4\pi y)\sin(4\pi z)$.
This problem is more challenging, so we restrict ourselves to the most efficient injection operator $I_\level^3$ and the V(2,2) cycle (V(1,1) diverges). 
The convergence results presented in Table \ref{tab:cube_I3_V22}, while still exhibiting asymptotically optimal convergence, also show higher numbers of iterations than in 2D.
More numerical tests are available in \cite{matalon_h-multigrid_2021}, where it is shown that better convergence can be achieved for comparable cost through simple parameter tunings such as the use of cycles with post-smoothing only, non-alternating directions in the Gauss-Seidel sweeps, or blockwise Gauss-Seidel smoothers.

\begin{table}
    \centering
    \begin{tabular}{cccccc}
         \toprule
         Levels & 3 & 4 & 5 \\
         \midrule
             $p=1$ & \num{17280} & \num{142848} & \num{1161216} \\
             $p=2$ & \num{34560} & \num{285696} & \num{2322432} \\
             $p=3$ & \num{57600} & \num{476160} & \num{3870720} \\
         \bottomrule
    \end{tabular}
    \caption{For the cubic domain, number of face unknowns at each level and for each value of $p$.}
    \label{tab:cube_pb_sizes}
\end{table}

\begin{table}
    \centering
    \begin{tabular}{cccccc}
         \toprule
         Levels &  3 &  4 &  5 \\ \midrule
         $p=1$  & 18 & 23 & 25 \\ 
         $p=2$  & 23 & 24 & 22 \\ 
         $p=3$  & 22 & 23 & 23 \\
         \bottomrule
    \end{tabular}
    \caption{In the cubic domain, number of V(2,2) iterations with the injection operator $I_\level^3$.}
    \label{tab:cube_I3_V22}
\end{table}

\subsection{Conclusions}
We have constructed and rigorously analyzed a homogeneous multigrid method for HHO methods, which does not internally change the discretization scheme. To this end, we generalized the homogeneous multigrid framework for HDG methods and verified the resulting, more general assumptions for HHO methods.  We verified our analytical findings numerically using several injection operators.

\section*{Acknowledgements}

The work of D.\ Di Pietro was partially funded by the European Union (ERC Synergy, NEMESIS, project number 101115663).
Views and opinions expressed are however those of the authors only and do not necessarily reflect those of the European Union or the European Research Council Executive Agency. Neither the European Union nor the granting authority can be held responsible for them.

G.\ Kanschat has been supported by the Deutsche Forschungsgemeinschaft (DFG, German Research Foundation) under Germany's Excellence Strategy EXC 2181/1 - 390900948 (the Heidelberg STRUCTURES Excellence Cluster).

A.\ Rupp has been supported by the Academy of Finland's grant number 350101 \emph{Mathematical models and numerical methods for water management in soils}, grant number 354489 \emph{Uncertainty quantification for PDEs on hypergraphs}, grant number 359633 \emph{Localized orthogonal decomposition for high-order, hybrid finite elements}, Business Finland's project number 539/31/2023 \emph{3D-Cure: 3D printing for personalized medicine and customized drug delivery}, and the Finnish \emph{Flagship of advanced mathematics for sensing, imaging and modelling}, decision number 358944.

The authors thank Ulrich R\"ude (FAU Erlangen-Nuremberg) for his support.

\bibliographystyle{ARalpha}
\bibliography{hmg_discrete_skeletal}
\end{document}